\def\epsilon{\varepsilon}
\newcommand{\set}[1]{\left\{#1\right\}}
\newcommand{\sett}[2]{\left\{#1 ~\Big| ~ #2\right\}}
\newcommand{\tcp}[2]{(#1+#2)^{tc}}
\newcommand{\tcu}[2]{(#1\cup #2)^{tc}}
\newcommand{\inv}[1]{\text{inv}\left(#1\right)}
\newcommand{\precdot}{\prec\mathrel{\mkern-5mu}\mathrel{\cdot}}
\newcommand{\rot}[3]{#1\overset{#3}\longrightarrow #2}
\newcommand{\card}[3]{\#_{#1}(#2,#3)}
\newcommand{\swo}{$s$-weak order}
\newcommand{\sdt}{$s$-decreasing tree}
\newcommand{\str}{$s$-tree rotation}
\newcommand{\trot}[3]{#1\overset{Tam#3}\longrightarrow #2}
\newcommand{\stt}{$s$-Tamari tree}
\newcommand{\staml}{$s$-Tamari lattice}
\newtheorem{theorem}{Theorem}[section]
\newtheorem{lemma}[theorem]{Lemma}
\newtheorem{corollary}[theorem]{Corollary}
\newtheorem{proposition}[theorem]{Proposition}
\newtheorem{definition}[theorem]{Definition}
\theoremstyle{remark}
\newtheorem{remark}[theorem]{Remark}
\newcounter{example}
\newenvironment{example}[1][]{\refstepcounter{example} \textbf{Example~\theexample.#1}}{}
\def\zh{\hat{0}}
\def\oneh{\hat{1}}
\title {Poset topology of $s$-weak order via SB-labelings}
\author{Stephen Lacina\footnote{The author was supported by NSF grants DMS-1953931 and DMS-1500987}}
\date{}
\begin{document}

\maketitle
\begin{abstract}
    Ceballos and Pons generalized weak order on permutations to a partial order on certain labeled trees, thereby introducing a new class of lattices called $s$-weak order. They also generalized the Tamari lattice by defining a particular sublattice of $s$-weak order called the $s$-Tamari lattice. We prove that the homotopy type of each open interval in $s$-weak order and in the $s$-Tamari lattice is either a ball or sphere. We do this by giving $s$-weak order and the $s$-Tamari lattice a type of edge labeling known as an SB-labeling. We characterize which intervals are homotopy equivalent to spheres and which are homotopy equivalent to balls; we also determine the dimension of the spheres for the intervals yielding spheres.
\end{abstract}

\begin{section}{Introduction}
In \cite{swkordceballospons2019}, Ceballos and Pons introduced a partial order called \textbf{{\swo}} on certain labeled trees known as \textbf{{\sdt}s}. They observed that this partial order generalizes weak order on permutations. They proved {\swo} is a lattice. They also found a particular class of {\sdt}s which play the role of 231-avoiding permutations. This led them to introduce a sublattice of {\swo} called the {\staml}, generalizing the Tamari lattice. 

Our main result is the following theorem:

\begin{theorem}\label{thm:introlabelthm}
The lattices {\swo} and the {\staml} each admit an SB-labeling. Thus, the order complex of each open interval in {\swo} and the {\staml} is homotopy equivalent to a ball or sphere of some dimension.
\end{theorem}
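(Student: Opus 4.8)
The plan is to reduce the entire statement to the construction of one explicit edge labeling on each of the two lattices, after which the homotopy conclusion follows formally from the theorem of Hersh and Mészáros that every open interval of a finite lattice carrying an SB-labeling is homotopy equivalent to a ball or a sphere. Recall that an SB-labeling is a labeling $\lambda$ of the covering relations of the Hasse diagram such that, for each element $u$ and each pair $y \neq z$ of elements covering $u$, the labels $\lambda(u \lessdot y)$ and $\lambda(u \lessdot z)$ are distinct and jointly control the interval $[u, y \vee z]$: every covering relation inside that interval is labeled by one of these two labels, and the interval has the ``diamond-like'' shape dictated by them. So the real content is combinatorial: to exhibit such a $\lambda$ on \swo\ and on the \staml, and then read off the dimension and the ball-versus-sphere dichotomy from the Hersh–Mészáros criterion, namely whether the top element of an interval is the join of the atoms of that interval.

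First I would set up the labeling on \swo. Since the covering relations are exactly the \str s, it suffices to attach a label to each rotation taking a tree $T$ to a covering tree $T'$. Guided by the classical case, where weak order on $S_n$ is SB-labeled by recording the inversion created across a cover, I would label each \str\ by the pair $(a,b)$ of node labels of the \sdt\ whose relative inversion number changes under the rotation, packaged together with the multiplicity increment so that two distinct rotations out of the same $T$ always receive distinct labels. The first axiom, distinctness of the labels on the atoms above any fixed $u$, should then be immediate from the bijection between the upward \str s of $T$ and the available rotation sites of $T$, since distinct sites alter the inversion data of distinct pairs.

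The crux is verifying the interval condition. Given $T$ with two upward rotations producing $T'$ and $T''$, I would compute the join $T' \vee T''$ using the explicit lattice structure of \swo\ established by Ceballos and Pons in \cite{swkordceballospons2019}, and show that $[T, T' \vee T'']$ is generated by the two rotations in the predicted way: every \str\ inside the interval carries the label of one of the two original rotations, and the rank-two or longer diamond shape required by the SB-axioms is realized. This is where the argument is most delicate, because two rotation sites in an \sdt\ can interact, with one rotation creating or destroying the site of another; I would therefore break the analysis into cases according to whether the two modified nodes are independent, nested, or adjacent, and track how the inversion function changes in each case. The independent case should yield a genuine diamond, while the interacting cases should produce the longer intervals that the SB framework permits.

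Finally, for the \staml\ I would transport the labeling by restriction, but with care. The \staml\ is a sublattice of \swo, so meets and joins of \stt s agree with those computed in \swo, and its covering relations are precisely the \str s that remain within the sublattice. I would then check that the restricted labeling still satisfies the SB-axioms relative to joins taken inside the \staml; the point to confirm is that for two upward rotations of an \stt\ the join $T' \vee T''$ is again an \stt\ and that the whole generated interval lies in the sublattice, so that the case analysis from the \swo\ argument applies without change. I expect the interaction analysis in the previous paragraph to be the main obstacle, with the sublattice bookkeeping for the \staml\ being the secondary technical point.
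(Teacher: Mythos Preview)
Your overall strategy---construct an explicit edge labeling, then analyze the interval $[T,T'\vee T'']$ by cases according to how the two rotation sites interact---matches the paper's. But your proposed labeling is wrong, and this is not a detail: labeling a cover by the \emph{pair} $(a,b)$ (or by the pair plus a multiplicity) does not give an SB-labeling. In the pentagon intervals that actually occur, the two atom covers out of $T$ are rotations along tree ascents $(a,c)$ and $(c,d)$ with $b=c$, and one of the saturated chains from $T$ to the join passes through a rotation along $(a,d)$. Under a pair labeling that chain carries the three distinct labels $(a,c),(a,d),(c,d)$, so condition~(iii) of \cref{def:sblabeling} fails. The paper's fix is to label the cover $\rot{T}{Z}{(a,b)}$ by $a$ alone; then the pentagon chains read $a,a,c$ and $c,a$ (or $a,c$ and $c,a,a$), and the hexagon chains read $a,a,c$ and $c,a,a$, using only the two atom labels. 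Distinctness on atoms still holds because, by \cref{rmk:nosamebottomascents}, a fixed $a$ can be the smaller element of at most one tree ascent of $T$.

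Your treatment of the {\staml} also needs repair. The {\staml} is a sublattice of {\swo}, so joins agree, but its cover relations are \emph{not} the {\swo} covers that happen to lie between two {\stt}s: a Tamari tree ascent $(a,b)$ need not satisfy condition~(iii) of \cref{def:treeascent} (the subtree $T^a_{s(a)}$ may be nonempty), so a single $s$-Tamari rotation can span several {\swo} covers. Hence there is no ``restricted'' labeling to speak of; you must define the labeling directly on $s$-Tamari covers (again by the smaller element $a$ of the Tamari tree ascent) and rerun the interval analysis. The paper does exactly this, and the case analysis in fact simplifies: only diamonds and one type of pentagon arise, because $(c,d)$ always survives the rotation along $(a,b)$ when $a<c$.
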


We prove this as \cref{thm:sblabeling} for {\swo} and \cref{thm:tamsblabelthm} for the {\staml}. In both cases, we prove topological results using the tool of SB-labelings developed by Hersh and M{\'e}sz{\'a}ros in \cite{sblabelhershmeszaros2017}. Our result generalizes another result of Hersh and M{\'e}sz{\'a}ros that weak order on permutations and the classical Tamari lattice admit SB-labelings, with our labelings specializing in those cases to SB-labelings distinct from theirs. 

In {\swo} and the {\staml}, the spheres in \cref{thm:introlabelthm} are not always top dimensional, demonstrating that these posets are not always shellable. See \cite{shellblposets} for example for the definition of a shellable poset. We intrinsically characterize which intervals in {\swo} and the {\staml} are homotopy equivalent to spheres and which are homotopy equivalent to balls. We also determine the dimension of the spheres for the intervals yielding homotopy spheres. As a corollary, we deduce that the M{\"o}bius functions of {\swo} and the {\staml} only take values in $\set{-1,0,1}$. It is also known that the existence of an SB-labeling implies that distinct sets of atoms in an interval have distinct joins, giving another consequence of our results.

Part of Ceballos and Pons' interest in {\swo} comes from geometry. They conjecture that the Hasse diagrams of {\swo} are the $1$-skeleta of polytopal subdivisions of polytopes. They call these potential polytopal complexes \textbf{$s$-permutahedra}. They also conjecture that in particular cases the polytopes they are subdividing are classical permutahedra. Our result of an SB-labeling for {\swo}, though it considers these lattices from a topological perspective, seems to provide two pieces of evidence for Ceballos and Pons' conjecture. The first piece of evidence is that the Hasse diagrams of many lattices which admit SB-labelings can be realized as the $1$-skeleta of polytopes. The second comes from the fact that Ceballos and Pons' geometric perspective is somewhat similar in flavor to one point of view in Hersh's work in \cite{oneskelhasseshersh2018}. Hersh studied posets which arise as the $1$-skeleta of simple polytopes via directing edges by some cost vector. In particular, Hersh's Theorem 4.9 in \cite{oneskelhasseshersh2018} proves that all open intervals in lattices which are realizable as such $1$-skeleta of simple polytopes are either homotopy balls or spheres.

Similarly, Ceballos and Pons' also took a geometric viewpoint on the {\staml}. They showed that the {\staml} is isomorphic to another generalization of the classical Tamari lattice, namely the $\nu$-Tamari lattice introduced by Pr{\'e}ville-Ratelle and Viennot in \cite{nutamlprvvien2017}. The geometry of the $\nu$-Tamari lattice was recently studied by Ceballos, Padrol, and Sarmiento in \cite{geomnutamlcebpadcam2019}. Similarly to how the Hasse diagram of the Tamari lattice is the $1$-skeleton of the associahedron, the Hasse diagram of the $\nu$-Tamari lattice is the $1$-skeleta of a polytopal subdivision of a polytope. Thus, the {\staml} also has such a realization. In the context of the {\staml}, Ceballos and Pons call these polytopal complexes \textbf{$s$-associahedra}. Further, they conjecture that in particular cases $s$-associahedra can be obtained from the $s$-permutahedra by deleting certain facets. The fact that the {\staml} admits an SB-labeling and has a realization as  the 1-skeleton of a polytopal complex seems to strengthen the evidence given by our result for Ceballos and Pons' conjecture of such realizations for $s$-permutahedra. Additionally, our result contributes two new classes of lattices which admit SB-labelings.

This paper proceeds as follows: \cref{sec:sword} provides the necessary background on posets, {\sdt}s, {\swo}, and the {\staml}. We largely follow the notation and definitions of \cite{swkordceballospons2019}. We also observe that {\swo} is not always a Cambrian lattice. \cref{sec:sword} reviews the notion of SB-labeling as well. \cref{sec:proofs} and \cref{sec:stam} are where we prove our main results, most notably giving SB-labelings for {\swo} and the {\staml}. 

\end{section}

\begin{section}{Background} \label{sec:sword}

\begin{subsection}{Background on Posets}
Let $(P,\leq)$ be a poset. For $x\leq y\in P$, the \textbf{closed interval} from $x$ to $y$ is the set $[x,y]=\sett{z\in P}{x\leq z\leq y}$. \textbf{The open interval from $x$ to $y$} is defined analogously with strict inequalities and denoted $(x,y)$. We say that $y$ \textbf{covers} $x$, denoted $x\lessdot y$, if $x\leq z\leq y$ implies $z=x$ or $z=y$. $P$ is a \textbf{lattice} if each pair $x,y\in P$ has a unique least upper bound, denoted $x\vee y$, and a unique greatest lower bound, denoted $x\wedge y$. We denote by $\zh$ (respectively $\oneh$) the unique minimal (respectively unique maximal) element of a finite lattice. The elements which cover $\zh$ are called \textbf{atoms}. For $x,y\in P$ with $x<y$, a \textbf{$k$-chain from $x$ to $y$} in $P$ is a subset $C=\set{x_0,x_1,\dots,x_k}\subset P$ such that $x=x_0<x_1<\dots< x_k=y$. A chain $C$ is said to be \textbf{saturated} if $x_i\lessdot x_{i+1}$ for all $i$. The \textbf{order complex} of $P$, denoted $\Delta(P)$, is the abstract simplicial complex with vertices the elements of $P$ and $i$-dimensional faces the $i$-chains of $P$. For $x,y\in P$ with $x<y$, we denote by $\Delta(x,y)$ the order complex of the open interval $(x,y)$ as an induced subposet of $P$. Thus, when we refer to topological properties of $P$, we mean the topological properties of a geometric realization of $\Delta(P)$. In particular, the homotopy type of $P$ refers to the homotopy type of $\Delta(P)$. It is well known that the M{\"o}bius function of $P$ $\mu_P$ satisfies $\mu_P(x,y) = \tilde{\chi}(\Delta(x,y))$. Here, $\tilde{\chi}$ is the reduced Euler characteristic. This provides one of the important connections between the combinatorial and enumerative structure of a poset and its topology.
\end{subsection}

\begin{subsection}{Background on {\swo}} \label{sec:bkgdswo}
A \textbf{weak composition} is a sequence of non-negative integers $s=(s(1),\dots, \newline s(n))$ with $s(i)\in \mathbb{N}$ for all $i\in [n]$. We say the \textbf{length} of a weak composition $s$ is $l(s)=n$. Let $s$ be a weak composition. An \textbf{$s$-decreasing tree} is a planar rooted tree $T$ with $n$ internal vertices which are labeled $1$ to $n$ (leaves are not labeled and are the only unlabeled vertices) such that internal vertex $i$ has $s(i)+1$ children and all labeled descendants of $i$ have labels less than $i$. The $s(i)+1$ children of $i$ are indexed by $0$ to $s(i)$. We denote the full subtree of $T$ rooted at $i$ by $T^i$, and denote the full subtrees rooted at the $s(i)+1$ children of $i$ by $T^i_0,\dots, T^i_{s(i)}$, respectively. For $i$ and $0\leq j\leq s(i)$, we denote by $T^i\setminus j$, the subtree of $T$ obtained from $T^i$ by replacing $T^i_j$ with a leaf. Also, $T^i_{j_1,\dots,j_k}$ will denote the forest of the full subtrees rooted at the $j_1,\dots, j_k$ children of $i$. Let $k$ be the $j$th child of $i$ in $T$. We define the \textbf{$j$th left subtree of $i$ in $T$}, denoted $_LT^i_j$, to be the subtree of $T$ with root $i$ obtained by walking from $i$ to $k$ and then down the left most subtree possible until reaching a leaf. Similarly, we define the \textbf{$j$th right most subtree of $i$ in $T$}, denoted $_RT^i_j$, to be the subtree of $T$ with root $i$ obtained by walking from $i$ to $k$ and then down the right most subtree possible until reaching a leaf. \cref{fig:sdtexample} is an example of an {\sdt} with $s=(0,0,0,2,1,3)$, along with some examples of the subtrees just defined

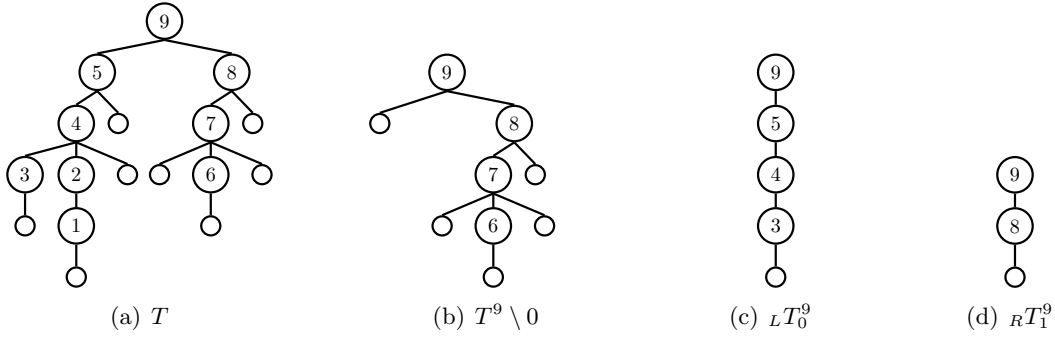
\begin{figure}[h]
    \centering
    \subfigure[$T$]{\scalebox{0.7}{\begin{tikzpicture}[very thick]
\node [style={draw,circle}]{9}
    child{ node[style={draw,circle},xshift=-15,yshift=15] {5} 
        child{ node[style={draw,circle},yshift=15,xshift=10] {4} 
            child{ node[style={draw,circle},yshift=15,xshift=15] {3} 
                child{ node[style={draw,circle},yshift=15,xshift=0] {}}
            }
            child{ node[style={draw,circle},yshift=15,xshift=0] {2}
                child{ node[style={draw,circle},yshift=15,xshift=0] {1}     child{ node[style={draw,circle},yshift=15,xshift=0] {}}
                }
            }
            child{ node[style={draw,circle},yshift=15,xshift=-15] {}}
        }
        child{ node[style={draw,circle},yshift=15,xshift=-10] {} }
    }
    child{ node[style={draw,circle},yshift=15,xshift=15] {8} 
        child{ node[style={draw,circle},yshift=15,xshift=10] {7}
            child{ node[style={draw,circle},yshift=15,xshift=15] {}}
            child{ node[style={draw,circle},yshift=15,xshift=0] {6} 
                child{ node[style={draw,circle},yshift=15,xshift=0] {}}
            }
            child{ node[style={draw,circle},yshift=15,xshift=-15] {}}
        }
        child{ node[style={draw,circle},yshift=15,xshift=-10] {}} 
    };
\end{tikzpicture}}}
\quad\quad\quad
    \subfigure[$T^9\setminus 0$]{\scalebox{0.7}{\begin{tikzpicture}[very thick]
\node [style={draw,circle}]{9}
    child{ node[style={draw,circle},xshift=-15,yshift=15] {}}
    child{ node[style={draw,circle},yshift=15,xshift=15] {8} 
        child{ node[style={draw,circle},yshift=15,xshift=10] {7}
            child{ node[style={draw,circle},yshift=15,xshift=15] {}}
            child{ node[style={draw,circle},yshift=15,xshift=0] {6} 
                child{ node[style={draw,circle},yshift=15,xshift=0] {}}
            }
            child{ node[style={draw,circle},yshift=15,xshift=-15] {}}
        }
        child{ node[style={draw,circle},yshift=15,xshift=-10] {}} 
    };
\end{tikzpicture} \quad\quad}}
\quad\quad\quad
\subfigure[$_LT^9_0$]{\quad\quad \scalebox{0.7}{\begin{tikzpicture}[very thick]
\node [style={draw,circle}]{9}
    child{ node[style={draw,circle},xshift=0,yshift=15] {5}
        child{node[style={draw,circle},yshift=15,xshift=0] {4}
            child{node[style={draw,circle},yshift=15,xshift=0] {3} 
                child{node[style={draw,circle},yshift=15,xshift=0] {} }
            }
        }
    };
\end{tikzpicture} \quad\quad}}
\quad\quad\quad
\subfigure[$_RT^9_1$]{\quad\quad \scalebox{0.7}{\begin{tikzpicture}[very thick]
\node [style={draw,circle}]{9}
    child{ node[style={draw,circle},xshift=0,yshift=15] {8}
        child{ node[style={draw,circle},yshift=15,xshift=0] {} }
    };
\end{tikzpicture} \quad\quad}}
    \caption{An {\sdt} $T$ with $s=(0,0,0,2,1,0,2,1,1)$ and examples of some defined subtrees.}
    \label{fig:sdtexample}
\end{figure}

\begin{definition}\label{def:treeinversions}\cite[Definition 2.1]{swkordceballospons2019}
Let $T$ be an $s$-decreasing tree and $1\leq x < y \leq n$. The \textbf{cardinality} of $(y,x)$ in $T$, denoted $\boldsymbol{\card{T}{y}{x}}$, is defined by the following rules: \begin{enumerate}
\item $\card{T}{y}{x}=0$ if $x$ is left of $y$ in $T$ or $x\in T^y_0$; 
\item $\card{T}{y}{x}=i$ if $x\in T^y_i$ with $0<i<s(y)$; and
\item $\card{T}{y}{x}=s(y)$ if $x\in T^y_{s(y)}$ or $x$ is right of $y$ in $T$. \end{enumerate}

If $\card{T}{y}{x}>0$, then $(y,x)$ is said to be a \textbf{tree inversion} of $T$. We denote by $\textbf{inv}\boldsymbol{(T)}$ the multi-set of tree inversions of $T$ counted with multiplicity their cardinality.
\end{definition}

Now we can also formally describe the $j$th left and right subtrees of $i$ in $T$, examples of which are found in (c) and (d) of \cref{fig:sdtexample}. \[ _LT^i_j=\sett{d\in T^i}{d=i, \text{ or } d\in T^i_j \text{ and } \card{T}{e}{d}=0 \text{ } \forall e\in T^i_j \text{ such that } d<e }.\] \[ _RT^i_j=\sett{d\in T^i}{d=i, \text{ or } d\in T^i_j \text{ and } \card{T}{e}{d}=s(e) \text{ } \forall e\in T^i_j \text{ such that } d<e }. \]

\begin{remark}
For $s=(1,\dots,1)$, $s$-decreasing trees are in by bijection with permutations in $S_{l(s)}$ and tree inversions are precisely inversions of the corresponding permutation. 
\end{remark}

\begin{remark}\label{rmk:guarantreecontain}
If $T$ is an \sdt, $1\leq a<b\leq n$, and $0<\card{T}{b}{a}<s(b)$, then $a\in T^b_{\card{T}{b}{a}}$.
\end{remark}

\begin{remark}\label{rmk:biggercontain}
If $e\in T^a$ and $e\in T^b_i$ for some $a<b$, then $a\in T^b_i$. Further, if $e\in T^a$ and $a<b$, then $\card{T}{b}{e}=\card{T}{b}{a}$.
\end{remark}

\cref{fig:treeandinvs} is an {\sdt} with the cardinality of each pair of labeled vertices listed.

\begin{figure}[H] 
\centering
\subfigure{
\centering
\scalebox{0.8}{\begin{tikzpicture}[very thick]
\node [style={draw,circle}]{6}
    child{ node[style={draw,circle},xshift=10,yshift=15] {5} 
        child{ node[style={draw,circle},yshift=15,xshift=10] {} }
        child{ node[style={draw,circle},yshift=15,xshift=-10] {} }
    }
    child{ node[style={draw,circle},yshift=15] {} }
    child{ node[style={draw,circle},xshift=-15,yshift=15] {4} 
        child{node[style={draw,circle},yshift=15,xshift=15] {3}
            child{node[style={draw,circle},yshift=15,xshift=0] {} }
        }
        child{node[style={draw,circle},yshift=15] {} }
        child{ node[style={draw,circle},yshift=15,xshift=-15] {2}
            child{ node[style={draw,circle},yshift=15] {1}
                child{node[style={draw,circle},yshift=15] {} }
            }
        }
    }
    child{ node[style={draw,circle},yshift=15,xshift=-15] {} };
\end{tikzpicture}
}}
\subfigure{
\centering
\scalebox{0.9}{\begin{tabular}{ c c c c c }
 $\card{T}{6}{5}=0$ & $\card{T}{6}{4}=2$ & $\card{T}{6}{3}=2$ & $\card{T}{6}{2}=2$ & $\card{T}{6}{1}=2$ \\
 & $\card{T}{5}{4}=1$ & $\card{T}{5}{3}=1$ & $\card{T}{5}{2}=1$ & $\card{T}{5}{1}=1$ \\ 
  & & $\card{T}{4}{3}=0$ & $\card{T}{4}{2}=2$ & $\card{T}{4}{1}=2$ \\  
  & & & $\card{T}{3}{2}=0$ & $\card{T}{3}{1}=0$ \\
  & & & & $\card{T}{2}{1}=0$ \\
  & & & & \\
  & & & & \\
  & & & & \\
  & & & & \\
  & & & & \\
  & & & & \\
  & & & & \\
  & & & & \\
  & & & & \\
  & & & & \\
\end{tabular}
}}
\vspace{-30mm}
\caption{An {\sdt} and its cardinalities for $s=(0,0,0,2,1,3)$.} \label{fig:treeandinvs}
\end{figure}
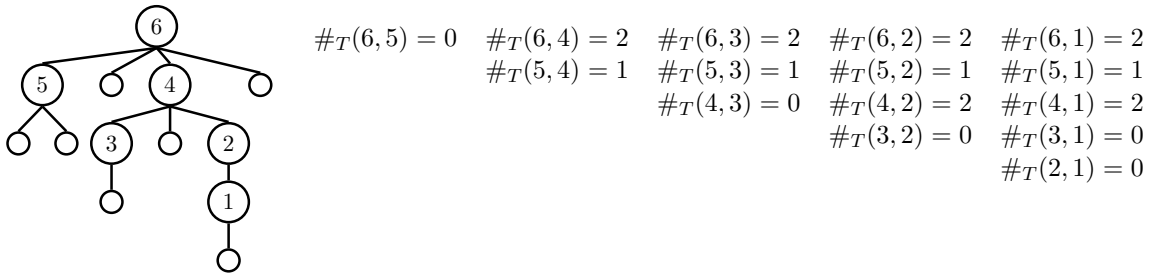

Next we establish notation for sets of tree inversions examples of which follow \cref{fig:swoexamples} using {\sdt}s from those examples of {\swo}.

\begin{definition}\label{def:multinvsets}\cite[Definition 2.2]{swkordceballospons2019}
A \textbf{multi-inversion set} on $[n]$ is a multi-set $I$ of pairs $(y,x)$ such that $1\leq x < y \leq n$. We write $\boldsymbol{\#_{I}(y,x)}$ for the multiplicity of $(y,x)$ in $I$ so if $(y,x)$ does not appear in $I$, $\card{I}{y}{x}=0$. 

Given multi-inversion sets $I$ and $J$, we say $I$ is \textbf{included} in $J$ and write $I\subseteq J$ if $\card{I}{y}{x}\leq \card{J}{y}{x}$ for all $1\leq x<y\leq n$. We also define the \textbf{multi-inversion set difference}, $\boldsymbol{J-I}$, to be the multi-inversion set with $\card{J-I}{y}{x} = \card{J}{y}{x}-\card{I}{y}{x}$ whenever this difference is non-negative and 0 otherwise.
\end{definition}

 This leads to a characterization of those multi-inversion sets which are actually sets of tree inversions of {\sdt}s. Further, it motivates the definition of {\swo} in analogy with the inversion set definition definition of weak order on permutations.

\begin{proposition}\label{prop:invchartrees}\cite[Proposition 2.4]{swkordceballospons2019}
There is a bijection between $s$-decreasing trees and multi-inversion sets $I$ satisfying $\card{I}{y}{x}\leq s(y)$ and the following two properties:
\begin{itemize}
    \item \textbf{Transitivity}: if $a<b<c$ and $\card{I}{c}{b}=i$, then $\card{I}{b}{a}=0$ or $\card{I}{c}{a}\geq i$.
    \item \textbf{Planarity}: if $a<b<c$ and $\card{I}{c}{a}=i$, then $\card{I}{b}{a}=s(b)$ or $\card{I}{c}{b}\geq i$.
\end{itemize}
Such multi-inversion sets are called \textbf{$s$-tree inversion sets}.
\end{proposition}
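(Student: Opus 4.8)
The natural candidate for the bijection is the map $\Phi$ sending an \sdt{} $T$ to its multiset of tree inversions $\inv{T}$ from \cref{def:treeinversions}; the plan is to show that $\Phi$ lands in the set of $s$-tree inversion sets and then to build an explicit inverse $\Psi$ by reconstructing $T$ recursively from the cardinality data.

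First I would verify that $\inv{T}$ satisfies the three defining conditions. The bound $\card{T}{y}{x}\leq s(y)$ is immediate from \cref{def:treeinversions}, since every cardinality lies in $\{0,1,\dots,s(y)\}$. For transitivity and planarity, I would fix $a<b<c$ and argue by a case analysis on the relative positions of $a$, $b$, $c$ in the planar tree $T$ --- whether each smaller label sits inside a specific indexed subtree of a larger one, or strictly to its left or right. The key structural inputs are \cref{rmk:biggercontain}, which propagates the position of a vertex relative to $y$ up to all of its ancestors that remain descendants of $y$, and \cref{rmk:guarantreecontain}, which identifies the subtree containing $a$ when $0<\card{T}{b}{a}<s(b)$. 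In each case the planarity of the embedding forces precisely the inequality on cardinalities demanded by the two conditions.

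For the inverse I would reconstruct a tree from an $s$-tree inversion set $I$ by recursion on the label set. Given a set $S$ of labels, its root must be $m=\max S$, since in an \sdt{} every vertex exceeds all of its labeled descendants. The remaining labels are partitioned into the $s(m)+1$ groups $S_j=\{x\in S:\card{I}{m}{x}=j\}$ for $0\leq j\leq s(m)$, and each $S_j$ is recursively assembled into the subtree $T^m_j$. I would then show by induction that transitivity and planarity are exactly what guarantee this recursion yields a valid planar \sdt{}: transitivity ensures that whenever a label $a$ inverts with a label $b$ placed in group $j$, it is itself placed in a group of index at least $j$, so descendants are consistently nested; planarity ensures the left-to-right order of the groups is realizable in the plane. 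One then checks $\Phi(\Psi(I))=I$ and $\Psi(\Phi(T))=T$, establishing that $\Phi$ and $\Psi$ are mutually inverse.

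I expect the main obstacle to be reconciling the two extremal cases in \cref{def:treeinversions} --- where $\card{T}{y}{x}$ equals $0$ or $s(y)$ --- with the recursive reconstruction. For the overall root these cases are unambiguous, since nothing lies to the left or right of the root, so the cardinalities with first coordinate $n$ partition $\{1,\dots,n-1\}$ cleanly. At deeper levels, however, a value of $0$ or $s(y)$ may instead record that $x$ lies to the left or right of $y$ rather than inside an indexed subtree of $y$, and verifying that the reconstructed planar tree assigns these boundary cardinalities correctly is exactly where transitivity and planarity must be shown to be both necessary and sufficient.
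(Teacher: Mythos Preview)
This proposition is not proved in the present paper: it is quoted as background from Ceballos and Pons \cite[Proposition~2.4]{swkordceballospons2019}, so there is no proof here against which to compare your attempt. Your outline --- verifying that $\inv{T}$ satisfies the bound, transitivity, and planarity by a case analysis on relative positions, and then building the inverse by recursively partitioning the label set according to $\card{I}{m}{\cdot}$ with $m$ the maximum label --- is the natural strategy and is essentially how the result is established in the original reference. Your identification of the boundary cases $\card{T}{y}{x}\in\{0,s(y)\}$ as the point requiring care is accurate: at deeper levels of the recursion one must check that a label already placed to the left or right of $y$ by an earlier step receives the correct extremal cardinality, and this is precisely where transitivity and planarity are invoked.
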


\begin{definition}\label{def:swkorder}\cite[Definition 2.5]{swkordceballospons2019}
Let $s$ be a weak composition. The \textbf{$s$-weak order} is the partial order on {\sdt}s given by $T\preceq Z$ if and only if $\inv{Z} \subseteq \inv{T}$ for {\sdt}s $T$ and $Z$ using the inclusion of multi-inversion sets from \cref{def:multinvsets}.
\end{definition}

\cref{fig:swoexamples} shows three examples of \swo. The labelings of the last two examples is our SB-labeling which is defined in \cref{sec:proofs}.

%s=(0,0,2) swo boxes
\newsavebox{\minbox}
\sbox{\minbox}{
   \scalebox{.4}{\begin{tikzpicture}[very thick]
\node [style={draw,circle}]{3}
    child{ node[style={draw,circle},xshift=15,yshift=15] {2} 
        child{ node[style={draw,circle},yshift=15] {1} 
            child{ node[style={draw,circle},yshift=15] {}}
            }
        }
    child{ node[style={draw,circle},yshift=15] {} }
    child{ node[style={draw,circle},xshift=-15,yshift=15] {} };
\end{tikzpicture}}
 }

\newsavebox{\twothreebox}
\sbox{\twothreebox}{
   \scalebox{.4}{\begin{tikzpicture}[very thick]
\node [style={draw,circle}]{3}
        child{ node[style={draw,circle},xshift=15,yshift=15] {1} 
            child{ node[style={draw,circle},yshift=15] {}}
        }
    child{ node[style={draw,circle},yshift=15] {2} 
        child{ node[style={draw,circle},yshift=15] {} }
        }
    child{ node[style={draw,circle},xshift=-15,yshift=15] {} };
\end{tikzpicture}}
 } 
 
\newsavebox{\onethreebox}
\sbox{\onethreebox}{
   \scalebox{.4}{\begin{tikzpicture}[very thick]
\node [style={draw,circle}]{3}
        child{ node[style={draw,circle},xshift=15,yshift=15] {2} 
            child{ node[style={draw,circle},yshift=15] {}}
        }
    child{ node[style={draw,circle},yshift=15] {1} 
        child{ node[style={draw,circle},yshift=15] {} }
        }
    child{ node[style={draw,circle},xshift=-15,yshift=15] {} };
\end{tikzpicture}}
 } 

\newsavebox{\onetwothreebox}
\sbox{\onetwothreebox}{
   \scalebox{.4}{\begin{tikzpicture}[very thick]
\node [style={draw,circle}]{3}
        child{ node[style={draw,circle},xshift=15,yshift=15] {}}
    child{ node[style={draw,circle},yshift=15] {2} 
        child{ node[style={draw,circle},yshift=15] {1} 
            child{ node[style={draw,circle},yshift=15] {} }
            }
        }
    child{ node[style={draw,circle},xshift=-15,yshift=15] {} };
\end{tikzpicture}}
 } 
 
\newsavebox{\twotwothreebox}
\sbox{\twotwothreebox}{
   \scalebox{.4}{\begin{tikzpicture}[very thick]
\node [style={draw,circle}]{3}
        child{ node[style={draw,circle},xshift=15,yshift=15] {1} 
            child{ node[style={draw,circle},yshift=15] {}}
        }
    child{ node[style={draw,circle}] {} }
    child{ node[style={draw,circle},xshift=-15,yshift=15] {2} 
        child{ node[style={draw,circle},yshift=15] {} }
        };
\end{tikzpicture}}
 }
 
\newsavebox{\oneonethreebox}
\sbox{\oneonethreebox}{
   \scalebox{.4}{\begin{tikzpicture}[very thick]
\node [style={draw,circle}]{3}
        child{ node[style={draw,circle},xshift=15,yshift=15] {2} 
            child{ node[style={draw,circle},yshift=15] {}}
        }
    child{ node[style={draw,circle},yshift=15] {} }
    child{ node[style={draw,circle},xshift=-15,yshift=15] {1} 
        child{ node[style={draw,circle},yshift=15] {} }
        };
\end{tikzpicture}}
 }
 
\newsavebox{\oneonetwobox}
\sbox{\oneonetwobox}{
   \scalebox{.4}{\begin{tikzpicture}[very thick]
\node [style={draw,circle}]{3}
    child{ node[style={draw,circle},xshift=15,yshift=15] {} }
    child{ node[style={draw,circle},yshift=15] {2} 
        child{ node[style={draw,circle},yshift=15] {}}
        }
    child{ node[style={draw,circle},xshift=-15,yshift=15] {1} 
        child{ node[style={draw,circle},yshift=15] {} }
        };
\end{tikzpicture}}
 }
 
\newsavebox{\twotwoonebox}
\sbox{\twotwoonebox}{
   \scalebox{.4}{\begin{tikzpicture}[very thick]
\node [style={draw,circle}]{3}
    child{ node[style={draw,circle},xshift=15,yshift=15] {} }
    child{ node[style={draw,circle},yshift=15] {1} 
        child{ node[style={draw,circle},yshift=15] {}}
        }
    child{ node[style={draw,circle},xshift=-15,yshift=15] {2} 
        child{ node[style={draw,circle},yshift=15] {} }
        };
\end{tikzpicture}}
 }
 
\newsavebox{\maxbox}
\sbox{\maxbox}{
   \scalebox{.4}{\begin{tikzpicture}[very thick]
\node [style={draw,circle}]{3}
    child{ node[style={draw,circle},xshift=15,yshift=15] {} }
    child{ node[style={draw,circle},yshift=15] {} }
    child{ node[style={draw,circle},xshift=-15,yshift=15] {2} 
        child{ node[style={draw,circle},yshift=15] {1}
            child{node[style={draw,circle},yshift=15] {} }
            }
        };
\end{tikzpicture}}
 }
 
%s=(0,1,2) swo boxes
\newsavebox{\zotminbox}
\sbox{\zotminbox}{
   \scalebox{.4}{\begin{tikzpicture}[very thick]
\node [style={draw,circle}]{3}
    child{ node[style={draw,circle},xshift=15,yshift=15] {2} 
        child{ node[style={draw,circle},yshift=15,xshift=0] {1} 
            child{ node[style={draw,circle},yshift=15,xshift=0] {}}
            }
        child{ node[style={draw,circle},yshift=15,xshift=0] {} }
        }
    child{ node[style={draw,circle},yshift=15] {} }
    child{ node[style={draw,circle},xshift=-15,yshift=15] {} };
\end{tikzpicture}}
 }
 
\newsavebox{\zotbox}
\sbox{\zotbox}{
   \scalebox{.4}{\begin{tikzpicture}[very thick]
\node [style={draw,circle}]{3}
    child{ node[style={draw,circle},xshift=15,yshift=15] {2}
        child{ node[style={draw,circle},yshift=15,xshift=0] {} }
        child{ node[style={draw,circle},yshift=15,xshift=0] {1} 
            child{ node[style={draw,circle},yshift=15,xshift=0] {}}
            }
        }
    child{ node[style={draw,circle},yshift=15] {} }
    child{ node[style={draw,circle},xshift=-15,yshift=15] {} };
\end{tikzpicture}}
 }
 
\newsavebox{\zotothbox}
\sbox{\zotothbox}{
   \scalebox{.4}{\begin{tikzpicture}[very thick]
\node [style={draw,circle}]{3}
    child{ node[style={draw,circle},xshift=15,yshift=15] {2}
        child{ node[style={draw,circle},yshift=15,xshift=10] {} }
        child{ node[style={draw,circle},yshift=15,xshift=-10] {} }
        }
    child{ node[style={draw,circle},yshift=15,xshift=0] {1} 
            child{ node[style={draw,circle},yshift=15,xshift=0] {}}
            }
    child{ node[style={draw,circle},xshift=-15,yshift=15] {} };
\end{tikzpicture}}
 }
 
\newsavebox{\zotothtthbox}
\sbox{\zotothtthbox}{
   \scalebox{.4}{\begin{tikzpicture}[very thick]
\node [style={draw,circle}]{3}
    child{ node[style={draw,circle},xshift=15,yshift=15] {} }
    child{ node[style={draw,circle},xshift=0,yshift=15] {2}
        child{ node[style={draw,circle},yshift=15,xshift=0] {}}
        child{ node[style={draw,circle},yshift=15,xshift=0] {1} 
            child{ node[style={draw,circle},yshift=15,xshift=0] {}}
            }
        }
    child{ node[style={draw,circle},xshift=-15,yshift=15] {} };
\end{tikzpicture}}
 }
 
\newsavebox{\zotothothbox}
\sbox{\zotothothbox}{
   \scalebox{.4}{\begin{tikzpicture}[very thick]
\node [style={draw,circle}]{3}
    child{ node[style={draw,circle},xshift=15,yshift=15] {2}
        child{ node[style={draw,circle},yshift=15,xshift=10] {} }
        child{ node[style={draw,circle},yshift=15,xshift=-10] {} }
        }
    child{ node[style={draw,circle},xshift=0,yshift=15] {} }
    child{ node[style={draw,circle},yshift=15,xshift=-15] {1} 
            child{ node[style={draw,circle},yshift=15,xshift=0] {}}
            };
\end{tikzpicture}}
 }
 
\newsavebox{\zotothothtthbox}
\sbox{\zotothothtthbox}{
   \scalebox{.4}{\begin{tikzpicture}[very thick]
\node [style={draw,circle}]{3}
    child{ node[style={draw,circle},xshift=15,yshift=15] {} }
    child{ node[style={draw,circle},xshift=0,yshift=15] {2}
        child{ node[style={draw,circle},yshift=15,xshift=10] {} }
        child{ node[style={draw,circle},yshift=15,xshift=-10] {} }
        }
    child{ node[style={draw,circle},yshift=15,xshift=-15] {1} 
            child{ node[style={draw,circle},yshift=15,xshift=0] {}}
            };
\end{tikzpicture}}
 }

\newsavebox{\zmaxbox}
\sbox{\zmaxbox}{
   \scalebox{.4}{\begin{tikzpicture}[very thick]
\node [style={draw,circle}]{3}
    child{ node[style={draw,circle},xshift=15,yshift=15] {} }
    child{ node[style={draw,circle},xshift=0,yshift=15] {} }
    child{ node[style={draw,circle},xshift=-15,yshift=15] {2}
        child{ node[style={draw,circle},yshift=15,xshift=0] {}}
        child{ node[style={draw,circle},yshift=15,xshift=0] {1} 
            child{ node[style={draw,circle},yshift=15,xshift=0] {}}
            }
        };
\end{tikzpicture}}
 }
 
\newsavebox{\ztthbox}
\sbox{\ztthbox}{
   \scalebox{.4}{\begin{tikzpicture}[very thick]
\node [style={draw,circle}]{3}
    child{ node[style={draw,circle},yshift=15,xshift=15] {1} 
            child{ node[style={draw,circle},yshift=15,xshift=0] {}}
            }
    child{ node[style={draw,circle},xshift=0,yshift=15] {2}
        child{ node[style={draw,circle},yshift=15,xshift=10] {} }
        child{ node[style={draw,circle},yshift=15,xshift=-10] {} }
        }
    child{ node[style={draw,circle},xshift=-15,yshift=15] {} };
\end{tikzpicture}}
 }

\newsavebox{\ztthtthbox}
\sbox{\ztthtthbox}{
   \scalebox{.4}{\begin{tikzpicture}[very thick]
\node [style={draw,circle}]{3}
    child{ node[style={draw,circle},yshift=15,xshift=15] {1} 
            child{ node[style={draw,circle},yshift=15,xshift=0] {}}
            }
    child{ node[style={draw,circle},xshift=0,yshift=15] {} }
    child{ node[style={draw,circle},xshift=-15,yshift=15] {2}
        child{ node[style={draw,circle},yshift=15,xshift=10] {} }
        child{ node[style={draw,circle},yshift=15,xshift=-10] {} }
        };
\end{tikzpicture}}
 }
 
\newsavebox{\ztthtthothbox}
\sbox{\ztthtthothbox}{
   \scalebox{.4}{\begin{tikzpicture}[very thick]
\node [style={draw,circle}]{3}
    child{ node[style={draw,circle},xshift=15,yshift=15] {} }
    child{ node[style={draw,circle},yshift=15,xshift=0] {1} 
            child{ node[style={draw,circle},yshift=15,xshift=0] {}}
            }
    child{ node[style={draw,circle},xshift=-15,yshift=15] {2}
        child{ node[style={draw,circle},yshift=15,xshift=10] {} }
        child{ node[style={draw,circle},yshift=15,xshift=-10] {} }
        };
\end{tikzpicture}}
 }
 
\newsavebox{\ztthothbox}
\sbox{\ztthothbox}{
   \scalebox{.4}{\begin{tikzpicture}[very thick]
\node [style={draw,circle}]{3}
    child{ node[style={draw,circle},yshift=15,xshift=15] {} }
    child{ node[style={draw,circle},xshift=0,yshift=15] {2} 
        child{ node[style={draw,circle},yshift=15,xshift=0] {1} 
            child{ node[style={draw,circle},yshift=15,xshift=0] {}}
            }
        child{ node[style={draw,circle},yshift=15,xshift=0] {} }
        }
    child{ node[style={draw,circle},xshift=-15,yshift=15] {} };
\end{tikzpicture}}
 }
 
\newsavebox{\ztthtthothothbox}
\sbox{\ztthtthothothbox}{
   \scalebox{.4}{\begin{tikzpicture}[very thick]
\node [style={draw,circle}]{3}
    child{ node[style={draw,circle},yshift=15,xshift=15] {} }
    child{ node[style={draw,circle},xshift=0,yshift=15] {} }
    child{ node[style={draw,circle},xshift=-15,yshift=15] {2} 
        child{ node[style={draw,circle},yshift=15,xshift=0] {1} 
            child{ node[style={draw,circle},yshift=15,xshift=0] {}}
            }
        child{ node[style={draw,circle},yshift=15,xshift=0] {} }
        };
\end{tikzpicture}}
 }

%s=(0,2,2) swo boxes
\newsavebox{\ttminbox}
\sbox{\ttminbox}{
   \scalebox{.4}{\begin{tikzpicture}[very thick]
\node [style={draw,circle}]{3}
    child{ node[style={draw,circle},xshift=15,yshift=15] {2} 
        child{ node[style={draw,circle},yshift=15,xshift=15] {1} 
            child{ node[style={draw,circle},yshift=15] {}}
            }
        child{ node[style={draw,circle},yshift=15] {} }
        child{ node[style={draw,circle},yshift=15,xshift=-15] {} }
        }
    child{ node[style={draw,circle},yshift=15] {} }
    child{ node[style={draw,circle},xshift=-15,yshift=15] {} };
\end{tikzpicture}}
 }
 
\newsavebox{\tthbox}
\sbox{\tthbox}{
   \scalebox{.4}{\begin{tikzpicture}[very thick]
\node [style={draw,circle}]{3}
    child{ node[style={draw,circle},yshift=15] {1} 
        child{ node[style={draw,circle},yshift=15] {}}
        }
    child{ node[style={draw,circle},yshift=15] {2} 
        child{ node[style={draw,circle},yshift=15,xshift=15] {}}
        child{ node[style={draw,circle},yshift=15] {} }
        child{ node[style={draw,circle},yshift=15,xshift=-15] {} }
        }
    child{ node[style={draw,circle},xshift=-15,yshift=15] {} };
\end{tikzpicture}}
 } 
 
\newsavebox{\tthothbox}
\sbox{\tthothbox}{
   \scalebox{.4}{\begin{tikzpicture}[very thick]
\node [style={draw,circle}]{3}
    child{ node[style={draw,circle},xshift=15,yshift=15] {} }
    child{ node[style={draw,circle},yshift=15] {2} 
        child{ node[style={draw,circle},yshift=15,xshift=15] {1} 
            child{ node[style={draw,circle},yshift=15] {}}
            }
        child{ node[style={draw,circle},yshift=15] {} }
        child{ node[style={draw,circle},yshift=15,xshift=-15] {} }
        }
    child{ node[style={draw,circle},xshift=-15,yshift=15] {} };
\end{tikzpicture}}
 }

\newsavebox{\otbox}
\sbox{\otbox}{
   \scalebox{.4}{\begin{tikzpicture}[very thick]
\node [style={draw,circle}]{3}
    child{ node[style={draw,circle},xshift=15,yshift=15] {2} 
        child{ node[style={draw,circle},yshift=15,xshift=15] {} }
        child{ node[style={draw,circle},yshift=15] {1} 
            child{ node[style={draw,circle},yshift=15] {}}
            }
        child{ node[style={draw,circle},yshift=15,xshift=-15] {} }
        }
    child{ node[style={draw,circle},yshift=15] {} }
    child{ node[style={draw,circle},xshift=-15,yshift=15] {} };
\end{tikzpicture}}
 } 

\newsavebox{\ottthbox}
\sbox{\ottthbox}{
   \scalebox{.4}{\begin{tikzpicture}[very thick]
\node [style={draw,circle}]{3}
    child{ node[style={draw,circle},yshift=15,xshift=15] {} }
    child{ node[style={draw,circle},yshift=15] {2} 
        child{ node[style={draw,circle},yshift=15,xshift=15] {} }
        child{ node[style={draw,circle},yshift=15] {1} 
            child{ node[style={draw,circle},yshift=15] {}}
            }
        child{ node[style={draw,circle},yshift=15,xshift=-15] {} }
        }
    child{ node[style={draw,circle},xshift=-15,yshift=15] {} };
\end{tikzpicture}}
 }

\newsavebox{\ottthotbox}
\sbox{\ottthotbox}{
   \scalebox{.4}{\begin{tikzpicture}[very thick]
\node [style={draw,circle}]{3}
    child{ node[style={draw,circle},yshift=15,xshift=15] {} }
    child{ node[style={draw,circle},yshift=15] {2} 
        child{ node[style={draw,circle},yshift=15,xshift=15] {} }
        child{ node[style={draw,circle},yshift=15,xshift=0] {} }
        child{ node[style={draw,circle},yshift=15,xshift=-15] {1} 
            child{ node[style={draw,circle},yshift=15] {}}
            }
        }
    child{ node[style={draw,circle},xshift=-15,yshift=15] {} };
\end{tikzpicture}}
 }
 
\newsavebox{\tthtthbox}
\sbox{\tthtthbox}{
   \scalebox{.4}{\begin{tikzpicture}[very thick]
\node [style={draw,circle}]{3}
    child{ node[style={draw,circle},yshift=15,xshift=15] {1} 
        child{ node[style={draw,circle},yshift=15] {}}
        }
    child{ node[style={draw,circle},xshift=0,yshift=15] {} }
    child{ node[style={draw,circle},yshift=15,xshift=-15] {2} 
        child{ node[style={draw,circle},yshift=15,xshift=15] {}}
        child{ node[style={draw,circle},yshift=15] {} }
        child{ node[style={draw,circle},yshift=15,xshift=-15] {} }
        };
\end{tikzpicture}}
 }
 
\newsavebox{\ototbox}
\sbox{\ototbox}{
   \scalebox{.4}{\begin{tikzpicture}[very thick]
\node [style={draw,circle}]{3}
    child{ node[style={draw,circle},xshift=15,yshift=15] {2} 
        child{ node[style={draw,circle},yshift=15,xshift=15] {} }
        child{ node[style={draw,circle},yshift=15] {} }
        child{ node[style={draw,circle},yshift=15,xshift=-15] {1} 
            child{ node[style={draw,circle},yshift=15] {}}
            }
        }
    child{ node[style={draw,circle},yshift=15] {} }
    child{ node[style={draw,circle},xshift=-15,yshift=15] {} };
\end{tikzpicture}}
 }
 
\newsavebox{\ototothbox}
\sbox{\ototothbox}{
   \scalebox{.4}{\begin{tikzpicture}[very thick]
\node [style={draw,circle}]{3}
    child{ node[style={draw,circle},xshift=0,yshift=15] {2} 
        child{ node[style={draw,circle},yshift=15,xshift=15] {} }
        child{ node[style={draw,circle},yshift=15] {} }
        child{ node[style={draw,circle},yshift=15,xshift=-15] {} }
        }
    child{ node[style={draw,circle},yshift=15] {1} 
            child{ node[style={draw,circle},yshift=15] {}}
            }
    child{ node[style={draw,circle},xshift=-15,yshift=15] {} };
\end{tikzpicture}}
 }
 
\newsavebox{\ototothothbox}
\sbox{\ototothothbox}{
   \scalebox{.4}{\begin{tikzpicture}[very thick]
\node [style={draw,circle}]{3}
    child{ node[style={draw,circle},xshift=15,yshift=15] {2} 
        child{ node[style={draw,circle},yshift=15,xshift=15] {} }
        child{ node[style={draw,circle},yshift=15] {} }
        child{ node[style={draw,circle},yshift=15,xshift=-15] {} }
        }
    child{ node[style={draw,circle},xshift=0,yshift=15] {} }
    child{ node[style={draw,circle},yshift=15,xshift=-15] {1} 
            child{ node[style={draw,circle},yshift=15] {}}
            };
\end{tikzpicture}}
 }
 
\newsavebox{\otothotothbox}
\sbox{\otothotothbox}{
   \scalebox{.4}{\begin{tikzpicture}[very thick]
\node [style={draw,circle}]{3}
    child{ node[style={draw,circle},xshift=15,yshift=15] {} }
    child{ node[style={draw,circle},xshift=0,yshift=15] {2} 
        child{ node[style={draw,circle},yshift=15,xshift=15] {} }
        child{ node[style={draw,circle},yshift=15] {} }
        child{ node[style={draw,circle},yshift=15,xshift=-15] {} }
        }
    child{ node[style={draw,circle},yshift=15,xshift=0] {1} 
            child{ node[style={draw,circle},yshift=15] {}}
            };
\end{tikzpicture}}
 }
 
\newsavebox{\tthtthothbox}
\sbox{\tthtthothbox}{
   \scalebox{.4}{\begin{tikzpicture}[very thick]
\node [style={draw,circle}]{3}
    child{ node[style={draw,circle},xshift=15,yshift=15] {} }
    child{ node[style={draw,circle},yshift=15,xshift=0] {1} 
        child{ node[style={draw,circle},yshift=15] {}}
        }
    child{ node[style={draw,circle},yshift=15,xshift=0] {2} 
        child{ node[style={draw,circle},yshift=15,xshift=15] {}}
        child{ node[style={draw,circle},yshift=15] {} }
        child{ node[style={draw,circle},yshift=15,xshift=-15] {} }
        };
\end{tikzpicture}}
 }
 
\newsavebox{\tthtthothothbox}
\sbox{\tthtthothothbox}{
   \scalebox{.4}{\begin{tikzpicture}[very thick]
\node [style={draw,circle}]{3}
    child{ node[style={draw,circle},xshift=15,yshift=15] {} }
    child{ node[style={draw,circle},xshift=0,yshift=15] {} }
    child{ node[style={draw,circle},yshift=15,xshift=-15] {2} 
        child{ node[style={draw,circle},yshift=15,xshift=15] {1} 
            child{ node[style={draw,circle},yshift=15] {}}
            }
        child{ node[style={draw,circle},yshift=15] {} }
        child{ node[style={draw,circle},yshift=15,xshift=-15] {} }
        };
\end{tikzpicture}}
 }

\newsavebox{\tthtthothothotbox}
\sbox{\tthtthothothotbox}{
   \scalebox{.4}{\begin{tikzpicture}[very thick]
\node [style={draw,circle}]{3}
    child{ node[style={draw,circle},xshift=15,yshift=15] {} }
    child{ node[style={draw,circle},xshift=0,yshift=15] {} }
    child{ node[style={draw,circle},yshift=15,xshift=-15] {2} 
        child{ node[style={draw,circle},yshift=15,xshift=15] {} }
        child{ node[style={draw,circle},yshift=15,xshift=0] {1} 
            child{ node[style={draw,circle},yshift=15] {}}
            }
        child{ node[style={draw,circle},yshift=15,xshift=-15] {} }
        };
\end{tikzpicture}}
 }
 
\newsavebox{\ttmaxbox}
\sbox{\ttmaxbox}{
   \scalebox{.4}{\begin{tikzpicture}[very thick]
\node [style={draw,circle}]{3}
    child{ node[style={draw,circle},xshift=15,yshift=15] {} }
    child{ node[style={draw,circle},xshift=0,yshift=15] {} }
    child{ node[style={draw,circle},yshift=15,xshift=-15] {2} 
        child{ node[style={draw,circle},yshift=15,xshift=15] {} }
        child{ node[style={draw,circle},yshift=15,xshift=0] {} }
        child{ node[style={draw,circle},yshift=15,xshift=-15] {1} 
            child{ node[style={draw,circle},yshift=15] {}}
            }
        };
\end{tikzpicture}}
 }
 
\begin{figure}[H]
    \centering
    \subfigure[$s=(0,0,2)$]{\scalebox{0.6}{\begin{tikzpicture}[very thick]
  \node (max) at (0,8) {\usebox{\maxbox}};  
  \node (h) at (-1,5.5) {\usebox{\twotwoonebox}};
  \node (g) at (1,5.5) {\usebox{\oneonetwobox}};
  \node (a) at (-2,3) {\usebox{\twotwothreebox}};
  \node (b) at (0,3) {\usebox{\onetwothreebox}};
  \node (c) at (2,3) {\usebox{\oneonethreebox}};
  \node (e) at (-1,0.5) {\usebox{\twothreebox}};
  \node (f) at (1,0.5) {\usebox{\onethreebox}};
  \node (min) at (0,-2) {\usebox{\minbox}};
  \draw (min) -- (f) -- (c) -- (g) -- (max) -- (h) -- (b) -- (g);
  \draw (min) -- (e) -- (a) -- (h);
  \draw (e) -- (b) -- (f);
\end{tikzpicture}}}
    \subfigure[$s=(0,1,2)$]{\scalebox{.6}{\begin{tikzpicture}[very thick]
  \node (12131323) at (0,6) {\usebox{\zotothothtthbox}}; 
  \node (max) at (-2,8) {\usebox{\zmaxbox}};
  \node (23231313) at (-4,6) {\usebox{\ztthtthothothbox}};
  \node (232313) at (-4,4) {\usebox{\ztthtthothbox}};
  \node (2323) at (-4,2) {\usebox{\ztthtthbox}};
  \node (121323) at (0,4) {\usebox{\zotothtthbox}};
  \node (1213) at (2,2) {\usebox{\zotothbox}};
  \node (121313) at (2,4) {\usebox{\zotothothbox}};
  \node (2313) at (-2,2) {\usebox{\ztthothbox}};
  \node (23) at (-2,0) {\usebox{\ztthbox}};
  \node (12) at (2,0) {\usebox{\zotbox}};
  \node (min) at (0,-2) {\usebox{\zotminbox}};
  \draw (min) --node [below left,blue]{$2$} (23);
  \draw (23) --node [left,blue]{$1$} (2313);
  \draw (min) --node [below right,blue]{$1$} (12);
  \draw (23) --node [below left,blue]{$2$} (2323);
  \draw (2323) --node [left,blue]{$1$} (232313);
  \draw (232313) --node [left,blue]{$1$} (23231313);
  \draw (23231313) --node [above left,blue]{$1$} (max);
  \draw (12) --node [left,blue]{$1$} (1213);
  \draw (1213) --node [left,blue]{$1$} (121313);
  \draw (1213) --node [below left,blue]{$2$} (121323);
  \draw (121323) --node [left,blue]{$1$} (12131323);
  \draw (121313) --node [below left,blue]{$2$} (12131323);
  \draw (12131323) --node [below left,blue]{$2$} (max);
  \draw (232313) --node [left,blue]{$1$} (23231313);
  \draw (2313) --node [above left,blue]{$1$} (121323);
  \draw (2313) --node [below left,blue]{$2$} (232313);
\end{tikzpicture}}}
\subfigure[$s=(0,2,2)$]{\scalebox{.6}{\begin{tikzpicture}[very thick]
  \node (max) at (0,10) {\usebox{\ttmaxbox}}; 
  \node (2323131312) at (-2,8) {\usebox{\tthtthothothotbox}};
  \node (23231313) at (-4,6) {\usebox{\tthtthothothbox}};
  \node (232313) at (-4,4) {\usebox{\tthtthothbox}};
  \node (121213) at (4,4) {\usebox{\ototothbox}};
  \node (2323) at (-4,2) {\usebox{\tthtthbox}};
  \node (1223) at (0,4) {\usebox{\ottthbox}};
  \node (1212) at (4,2) {\usebox{\ototbox}};
  \node (12131213) at (2,8) {\usebox{\otothotothbox}};
  \node (122312) at (2,6) {\usebox{\ottthotbox}};
  \node (12121313) at (4,6) {\usebox{\ototothothbox}}; 
  \node (2313) at (-2,2) {\usebox{\tthothbox}};
  \node (23) at (-2,0) {\usebox{\tthbox}};
  \node (12) at (2,0) {\usebox{\otbox}};
  \node (min) at (0,-2) {\usebox{\ttminbox}};
  \draw (min) --node [below left,blue]{$2$} (23);
  \draw (23) --node [right,blue]{$1$} (2313);
  \draw (2313) --node [above left,blue]{$1$} (1223);
  \draw (min) --node [below right,blue]{$1$} (12);
  \draw (12) --node [above right,blue]{$2$} (1223);
  \draw (23) --node [below left,blue]{$2$} (2323);
  \draw (2323) --node [left,blue]{$1$} (232313);
  \draw (2313) --node [above right,blue]{$2$} (232313);
  \draw (232313) --node [left,blue]{$1$} (23231313);
  \draw (23231313) --node [above left,blue]{$1$} (2323131312);
  \draw (1223) --node [right,blue]{$2$} (2323131312);
  \draw (2323131312) --node [above left,blue]{$1$} (max);
  \draw (1223) --node [below right,blue]{$1$} (122312);
  \draw (122312) --node [left,blue]{$1$} (12131213);
  \draw (12131213) --node [above right,blue]{$2$} (max);
  \draw (12) --node [below right,blue]{$1$} (1212);
  \draw (1212) --node [right,blue]{$1$} (121213);
  \draw (121213) --node [below left,blue]{$2$} (122312);
  \draw (121213) --node [right,blue]{$1$} (12121313);
  \draw (12121313) --node [above right,blue]{$2$} (12131213);
\end{tikzpicture}}}
    \caption{Examples of {\swo}. The labeling is our SB-labeling in \cref{thm:sblabelingsection}.}
    \label{fig:swoexamples}
\end{figure}

Below in \cref{ex:diffplantr}, we illustrate \cref{def:multinvsets} and \cref{prop:invchartrees}. We use subscripts on pairs $(y,x)$ to indicate their multiplicity in a multi-inversion set.

\begin{example}\label{ex:diffplantr} Illustrating \cref{def:multinvsets} and \cref{prop:invchartrees}, we take \[T_1 = \usebox{\otbox} \text{ and } T_2 = \usebox{\otothotothbox} \] and observe that $\inv{T_1}= \{ (2,1)_1\}$ and $\inv{T_2} = \{ (2,1)_2, (3,1)_2, (3,2)_1\}$. Thus, $\inv{T_1}\subseteq \inv{T_2}$ and $\inv{T_2} - \inv{T_1} = \{ (2,1)_1, (3,1)_2, (3,2)_1\}$. Now we note that while $\inv{T_1}=\{ (2,1)_1\}$ is transitive, $I=\{ (2,1)_1,(3,2)_1\}$ is not transitive because $\card{I}{3}{2}=1$ while $\card{I}{2}{1}=1\neq 0$ and $\card{I}{3}{1}=0<\card{I}{3}{2}$. Similarly, $\inv{T_2} = \{ (2,1)_2, (3,1)_2, (3,2)_1\}$ is planar while $J= \{(2,1)_1,(3,1)_1\}$ is not planar because $\card{J}{3}{1}=1$, but $\card{J}{2}{1}=1 \neq 2 = s(2)$ and $\card{J}{3}{2}=0<\card{J}{3}{1}$. 
\end{example}

\begin{remark} \label{rmk:weakorder}
Taking $s=(1,\dots,1)$, $s$-weak order is isomorphic to weak order on the symmetric group $S_{l(s)}$. 
\end{remark}

The following operations on multi-inversion sets are necessary to formulate the join in {\swo} which we will use in the course of our proofs. We give examples of these operations in \cref{ex:unpltrclosure} below.
\begin{itemize}
    \item For weak composition $s$ and multi-inversion sets $I$ and $J$ satisfying $\card{I}{y}{x},\card{J}{y}{x}\leq s(y)$ for all $1\leq x<y \leq n$, the \textbf{union of I and J} is the smallest multi-inversion set by inclusion $\boldsymbol{I\cup J}$ such that $I,J \subseteq I\cup J$, that is $\card{I\cup J}{y}{x} = \max\set{\card{I}{y}{x},\card{J}{y}{x}}$ for all $1\leq x<y\leq n$. Also, the \textbf{sum of I and J} is the multi-inversion set $\boldsymbol{I+J}$ with $\card{I+J}{y}{x} = \min\set{\card{I}{y}{x}+\card{J}{y}{x}, s(y)}$ for all $1\leq x<y\leq n$. If $J=\set{(b,a)}$, we write $\boldsymbol{I+(b,a)}$ for $I+J$.
    
    \item The \textbf{transitive closure}, denoted $\boldsymbol{I^{tc}}$, of a multi-inversion set $I$ is the smallest transitive multi-inversion set, in terms of inclusion, containing $I$.
\end{itemize}

\begin{theorem}\label{thm:latticejoin}\cite[Theorem 2.6]{swkordceballospons2019}
For any weak composition $s$, the $s$-weak order on $s$-decreasing trees is a lattice. The join of two $s$-decreasing trees $T$ and $Z$ is determined by
$$\inv{T \vee Z} = \tcu{\inv{T}}{\inv{Z}}.$$
\end{theorem}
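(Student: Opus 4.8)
The plan is to verify the explicit join formula and then upgrade the resulting semilattice structure to a lattice via a standard principle. Recall the general fact that a finite poset possessing a minimum element $\zh$ in which every pair has a least upper bound is automatically a lattice: the meet of $x$ and $y$ is recovered as the join of the (nonempty, as it contains $\zh$) set of common lower bounds. So it suffices to produce $\zh$ and to establish $\inv{T \vee Z} = \tcu{\inv{T}}{\inv{Z}}$. Here $\zh$ is the $s$-decreasing tree with empty inversion set, which vacuously satisfies Transitivity and Planarity; a maximum $\oneh$ likewise exists, corresponding to the full multi-inversion set $\card{}{y}{x}=s(y)$, which one checks directly is a legitimate $s$-tree inversion set. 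I read the order via \cref{def:swkorder} so that an upper bound of $T$ and $Z$ is an $s$-decreasing tree whose inversion set contains both $\inv{T}$ and $\inv{Z}$.

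Writing $K := \tcu{\inv{T}}{\inv{Z}}$, I would first dispatch the easy half: granting that $K$ is a genuine $s$-tree inversion set, it is the inversion set of the least upper bound. By \cref{prop:invchartrees} it is $\inv{W}$ for a unique $s$-decreasing tree $W$; since $K \supseteq \inv{T}\cup\inv{Z} \supseteq \inv{T},\inv{Z}$, the tree $W$ is an upper bound. Conversely, any upper bound $W'$ has $\inv{W'}\supseteq\inv{T}\cup\inv{Z}$, and as $\inv{W'}$ is transitive and the transitive closure is by definition the smallest transitive set containing its argument, $\inv{W'}\supseteq K=\inv{W}$; hence $W\preceq W'$, so $W=T\vee Z$.

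Everything thus reduces to the substantive claim that $K$ is an $s$-tree inversion set, i.e. that $\card{K}{y}{x}\le s(y)$ and that Transitivity and Planarity hold. Transitivity is immediate. The bound is nearly as easy: passing to the closure only ever raises an entry $\card{}{c}{a}$ to the value of some $\card{}{c}{b}$ with $a<b<c$, and every such forcing value is bounded by $s(c)$ — the very bound attached to $(c,a)$ — while $\inv{T}\cup\inv{Z}$ already obeys the bound, so the bound survives. The real work, and where I expect the main obstacle, is Planarity.

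My plan for Planarity is as follows. The union $U:=\inv{T}\cup\inv{Z}$ is itself planar: a value $\card{U}{c}{a}=i$ is attained by $\inv{T}$ or $\inv{Z}$, and Planarity of that genuine tree inversion set, together with the bound, yields $\card{U}{b}{a}=s(b)$ or $\card{U}{c}{b}\ge i$. The difficulty is that the closure creates new inversions that could a priori break Planarity, so I would first record a description of the closure: writing $w\in D(a)$ to mean $w=a$ or there is a chain $w=z_0>z_1>\cdots>z_k=a$ with every $\card{U}{z_{j-1}}{z_j}>0$, one has $\card{K}{c}{a}=\max\set{\card{U}{c}{w} : w\in D(a),\ w<c}$, proved by showing the right side defines a transitive set containing $U$ and contained in every transitive set containing $U$. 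Granting this, I would prove Planarity of $K$ by induction on the gap $c-a$. Fix $a<b<c$ with $\card{K}{c}{a}=i>0$ and let $w\in D(a)$ realize the maximum, so $\card{U}{c}{w}=i$. If $w=a$ or $w=b$ the conclusion is immediate from Planarity of $U$ or from $\card{K}{c}{b}\ge\card{U}{c}{b}=i$. If $b<w<c$, then $\card{K}{w}{a}>0$ with $w-a<c-a$, so the inductive hypothesis on $(a,b,w)$ gives $\card{K}{b}{a}=s(b)$ or $\card{K}{w}{b}>0$, and in the latter case $w\in D(b)$ forces $\card{K}{c}{b}\ge\card{U}{c}{w}=i$. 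If $a<w<b$, Planarity of $U$ on $w<b<c$ gives $\card{U}{c}{b}\ge i$ or $\card{U}{b}{w}=s(b)$; in the latter case $\card{K}{b}{w}=s(b)$, and composing the positive chain from $w$ down to $a$ via Transitivity of $K$ forces $\card{K}{b}{a}=s(b)$ (the degenerate case $s(b)=0$ being trivial since then $\card{K}{b}{a}=0$). The main obstacle is assembling exactly this case analysis: isolating the right closure description and the right induction parameter so that Planarity of $U$ and Transitivity of $K$ can be played against one another, while keeping track of how multiplicities interact with the bound $s(b)$.
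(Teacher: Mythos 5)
The paper gives no proof of this statement: it is quoted directly from Ceballos and Pons (their Theorem 2.6), so there is no in-paper argument to compare yours against. Your blind reconstruction is, as far as I can check, correct and self-contained, and it takes the strategy one would expect: reduce everything to showing $K=\tcu{\inv{T}}{\inv{Z}}$ satisfies the conditions of \cref{prop:invchartrees}, note that the bound and Transitivity are cheap, and concentrate the effort on Planarity. Two remarks. First, you were right to read the order so that upper bounds have \emph{larger} inversion sets; as printed, \cref{def:swkorder} has the containment reversed relative to what the join formula and the cover relations (which add inversions going up) force, so your reading is the only consistent one. Second, the load-bearing step is your explicit description of the closure, $\card{K}{c}{a}=\max\set{\card{U}{c}{w} : w\in D(a),\ w<c}$ with $D(a)$ the set of vertices linked to $a$ by a descending chain of positive cardinalities in $U=\inv{T}\cup\inv{Z}$: this function is transitive, contains $U$, and lies inside every transitive superset of $U$, so it is $U^{tc}$, and it yields the bound $\card{K}{c}{a}\le s(c)$ for free. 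Your four-way case analysis in the induction on $c-a$ closes in every branch; the only delicate spots are $b<w<c$, where the inductive hypothesis on $(a,b,w)$ together with $w\in D(b)$ gives $\card{K}{c}{b}\ge i$, and $a<w<b$, where Transitivity of $K$ pushes $\card{K}{b}{w}=s(b)$ down to $\card{K}{b}{a}=s(b)$. I see no gap.
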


\begin{example}\label{ex:unpltrclosure} This example illustrates the union and sum of multi-inversion sets as well as the transitive closure. Letting $T_1$ be the same {\sdt} as in \cref{ex:diffplantr}, $\inv{T_1}= \{ (2,1)_1\}$. Now $\inv{T_1}\cup \inv{T_1}= \{(2,1)_1\}$ while $\inv{T_1} + \inv{T_1}= \{(2,1)_2\}$. In \cref{ex:diffplantr}, we saw that the multi-inversion set $\{(2,1)_1,(3,2)_1\}$, which is also $\inv{T_1}+(3,2)$, is not transitive. From our observations in \cref{ex:diffplantr}, to satisfy the definition of transitivity in \cref{prop:invchartrees}, $\{(2,1)_1,(3,2)_1\}^{tc}$ must contain $(3,1)$ with multiplicity at least $1$. Thus, $\{(2,1)_1,(3,2)_1\}^{tc}=\{(2,1)_1,(3,1)_1,\newline (3,2)_1\}$. We can check that this is the multi-inversion set of one of the two {\sdt}s covering $T_1$ in (c) of \cref{fig:swoexamples}.
\end{example}

The cover relations in $s$-weak order are characterized as a certain type of operations known as tree rotations. We use this characterization heavily in our proofs. We first need a notion of an ascent in an \sdt. In the case $s=(1,\dots, 1)$, this notion corresponds to the definition of ascents for permutations. Examples of tree ascents of the {\sdt} in \cref{fig:sdtexample} are given in \cref{ex:treeascentsfromfig1}.

\begin{definition}\label{def:treeascent}\cite[Section 2.2]{swkordceballospons2019}
Let $T$ be an $s$-decreasing tree and $1\leq a<b \leq n$. The pair $(a,b)$ is a \textbf{tree ascent} of $T$ if the following hold:
\begin{itemize}
    \item [(i)] $a\in T^b_i$ for some $0\leq i <s(b)$,
    \item [(ii)] if $a \in T^e_j$ for any $a<e<b$, then $j=s(e)$,
    \item [(iii)] if $s(a)>0$, then $T^a_{s(a)}$ is a leaf, that is, $T^a_{s(a)}$ contains no internal vertices.
\end{itemize}
\end{definition}

\begin{example}\label{ex:treeascentsfromfig1}
The tree ascents of the {\sdt} in (a) of \cref{fig:sdtexample} are as follows: $\{(1,4),(2,4),(3,4),(4,5),(5,9),(6,7),(7,8)\}$.
\end{example}

\begin{remark}\label{rmk:noascents0tops}
If $s(b)=0$, then $(a,b)$ with $a<b$ is not a tree ascent of any \sdt. This would contradict (i) of \cref{def:treeascent}.
\end{remark}

\begin{remark}\label{rmk:nosamebottomascents}
An \sdt, $T$, cannot have tree ascents $(a,b)$ and $(a,c)$ with $b\neq c$. This would contradict condition (ii) of \cref{def:treeascent} as either $a<b<c$ or $a<c<b$ while $a\not \in T^b_{s(b)}, T^c_{s(c)}$ by condition (i) of \cref{def:treeascent}. We note that this implies that given an element $c\in [n]$ there is at most one $d\in [n]$ such that $(c,d)$ is a tree ascent of $T$. Further, whenever $(a,b)$ and $(c,d)$ are distinct tree ascents of $T$, we may assume $a<c$. We make this assumption throughout our proofs.
\end{remark}

\begin{remark}\label{rmk:ascentsubtree}
We observe that by \cref{rmk:biggercontain}, conditions (i) and (ii) of \cref{def:treeascent} together are equivalent to $a\in  {_RT^b_i}$ for some $0\leq i <s(b)$. The $i$th rightmost subtree of $b$ in $T$ ${_RT^b_i}$ is defined at the beginning of \cref{sec:bkgdswo}.
\end{remark}

\begin{definition}\label{def:givesstreestrot}\cite[Section 2.2]{swkordceballospons2019}
Let $T$ be an $s$-decreasing tree with tree ascent $(a,b)$. Then $\tcp{\inv{T}}{(b,a)}$ is an $s$-tree inversion set. We call the {\sdt} $Z$ defined by $\inv{Z} = \tcp{\inv{T}}{(b,a)}$ the \textbf{$s$-tree rotation of $T$ along $(a,b)$}. We denote this by $\boldsymbol{\rot{T}{Z}{(a,b)}}$.
\end{definition}

Ceballos and Pons characterized cover relations in {\swo} with the following theorem.
 
\begin{theorem}\label{thm:covers}\cite[Theorem 2.7]{swkordceballospons2019}
Let $T$ and $Z$ be \sdt{s}. Then $T\precdot Z$ if and only if there is a unique pair $(a,b)$ which is a tree ascent of $T$ such that $\rot{T}{Z}{(a,b)}$.
\end{theorem}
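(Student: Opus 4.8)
The plan is to work entirely with inversion sets, translating the statement through \cref{prop:invchartrees}: a cover $T\precdot Z$ in \swo{} is the same as a cover in the poset of $s$-tree inversion sets ordered by inclusion, where passing to a larger inversion set moves upward. This is the orientation for which the join of \cref{thm:latticejoin} is the transitive closure of a union, and for which the rotation $\rot{T}{Z}{(a,b)}$, having $\inv{Z}=\tcp{\inv{T}}{(b,a)}\supseteq\inv{T}$, moves up. Everything then reduces to understanding, for a tree ascent $(a,b)$, exactly which pairs lie in $\inv{Z}-\inv{T}$ and why this increment is indivisible.

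\emph{Rotations are covers.} Fix a tree ascent $(a,b)$ of $T$ and let $\rot{T}{Z}{(a,b)}$. Since $\card{T}{b}{a}=i<s(b)$ by condition (i) of \cref{def:treeascent}, adding $(b,a)$ strictly raises this multiplicity to $i+1$, and transitive closure only enlarges a set, so $\inv{T}\subsetneq\inv{Z}$ and $T\prec Z$. To see this is a cover, let $W$ be any $s$-tree inversion set with $\inv{T}\subseteq\inv{W}\subseteq\inv{Z}$. Because $\inv{T}$ is already transitive, the closure does not raise $\card{Z}{b}{a}$ beyond $i+1$, so $\card{W}{b}{a}\in\{i,i+1\}$. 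If $\card{W}{b}{a}=i+1$, then $\inv{W}$ contains $\inv{T}+(b,a)$ and is transitive, so by minimality of transitive closure $\inv{W}\supseteq\tcp{\inv{T}}{(b,a)}=\inv{Z}$, whence $\inv{W}=\inv{Z}$. If instead $\card{W}{b}{a}=i$, I claim $\inv{W}=\inv{T}$: the key lemma is that every pair in $\inv{Z}-\inv{T}$ other than $(b,a)$ itself is forced into the transitive closure only \emph{because} $\card{\cdot}{b}{a}$ was raised, in the precise sense that its presence in a set that contains $\inv{T}$ but still has $\card{\cdot}{b}{a}=i$ would violate the planarity or transitivity condition of \cref{prop:invchartrees}. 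Granting this, $\inv{W}$ can contain no such new pair, so $\inv{W}=\inv{T}$.

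\emph{Covers are rotations, and uniqueness.} Conversely let $T\precdot Z$. Choose a pair $(b,a)$ with $\card{Z}{b}{a}>\card{T}{b}{a}$ with $a$ as small as possible (and $b$ accordingly), and verify conditions (i)–(iii) of \cref{def:treeascent} directly from $T\prec Z$ and the validity of $\inv{Z}$: condition (ii) records that $a$ sits as far right as possible below intermediate vertices, which is exactly what permits $\card{\cdot}{b}{a}$ to increase, and condition (iii) records that the last subtree of $a$ must be empty for the increment not to be obstructed. Let $Z'$ be defined by $\rot{T}{Z'}{(a,b)}$. Since $\inv{T}+(b,a)\subseteq\inv{Z}$ and $\inv{Z}$ is transitive, minimality of transitive closure gives $\inv{Z'}=\tcp{\inv{T}}{(b,a)}\subseteq\inv{Z}$, so $T\prec Z'\preceq Z$; as $T\precdot Z$ and $Z'\neq T$, we conclude $Z'=Z$. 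For uniqueness I would show the rotation map on tree ascents of $T$ is injective: the raised pair $(b,a)$ is recoverable from $Z$ as the generator of $\inv{Z}-\inv{T}$ not implied by transitivity from the others together with $\inv{T}$, and \cref{rmk:nosamebottomascents} guarantees distinct tree ascents have distinct bottom elements, hence distinct raised pairs and distinct covers.

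\emph{Main obstacle.} The real content is the key lemma of the first part: a precise description of the cascade $\inv{Z}-\inv{T}$ produced by the transitive closure after adding $(b,a)$, together with the claim that each forced pair genuinely requires the bump of $\card{\cdot}{b}{a}$ and so cannot appear in a valid intermediate set. Establishing this needs a careful analysis of how transitivity propagates the increment upward to pairs $\card{Z}{c}{a}$ with $c>b$ and downward to pairs $\card{Z}{b}{a'}$ with $a'<a$, controlled at each step by conditions (i)–(iii); these three conditions are exactly the hypotheses that make the increment atomic. I expect this analysis to be the bulk of the proof, with both directions and uniqueness following formally once the lemma is in hand.
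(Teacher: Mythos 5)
First, a caveat on the comparison itself: the paper does not prove this statement --- it is quoted from Ceballos and Pons \cite[Theorem 2.7]{swkordceballospons2019} --- so there is no in-paper proof to measure your outline against, and I can only assess it on its own terms. The inversion-set strategy is viable and the squeeze argument for ``rotations are covers'' has the right shape, but the entire content of that direction is deferred to an unproven ``key lemma,'' and the converse direction contains a concrete error.

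The key lemma is true and not hard, so that gap is fillable: the pairs of $\inv{Z}-\inv{T}$ other than $(b,a)$ are exactly $(b,e)$ for $e\in T^a\setminus 0$ with $e\neq a$ (cf.\ \cref{lem:inversionsadded}); such an $e$ lies in $T^a_j$ with $0<j<s(a)$ because condition (iii) of \cref{def:treeascent} forces $T^a_{s(a)}$ to be a leaf, and $\card{W}{a}{e}=\card{T}{a}{e}=j<s(a)$ for any $W$ squeezed between $\inv{T}$ and $\inv{Z}$ since no pair with upper element $a$ changes; hence $\card{W}{b}{e}=i+1$ together with $\card{W}{b}{a}=i$ violates planarity in \cref{prop:invchartrees}. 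You would also need to verify that the transitive closure adds nothing beyond these pairs, which is precisely where conditions (i) and (ii) enter (condition (ii) guarantees nothing of $T^b_i$ lies strictly right of $a$, and \cref{rmk:biggercontain} disposes of pairs $(c,a)$ with $c>b$). The genuine error is in ``covers are rotations'': you recover the tree ascent by taking $a$ \emph{as small as possible} among pairs with $\card{Z}{b}{a}>\card{T}{b}{a}$. This fails. For the tree $T$ of \cref{fig:sdtexample} and the cover given by the rotation along $(4,5)$, the increased pairs are $(5,1),(5,2),(5,4)$ (see \cref{ex:exaddedinvs}); your rule selects $(1,5)$, which is not a tree ascent of $T$, since $1\in T^4_1$ with $s(4)=2$ violates condition (ii) (indeed $(1,4)$ is a tree ascent of $T$, so \cref{rmk:nosamebottomascents} already rules out $(1,5)$). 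The correct choice is $a$ \emph{as large as possible}: the root of the moved subtree is the maximum label occurring as a lower element of an increased pair. Even with that fix, verifying conditions (i)--(iii) for the chosen pair is the other substantive piece of work and is not supplied. The uniqueness argument via \cref{rmk:nosamebottomascents} is fine.
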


\begin{remark}\label{lem:1doesntmatter}
$s(1)$ does not change the isomorphism type of $s$-weak order because no tree ascent of an {\sdt} may have larger element $1$.
\end{remark}

\begin{remark}\label{rmk:treepicture}
We describe an $s$-tree rotation in terms of an operation on the trees themselves. This is illustrated in \cref{fig:genstreerotab}. Suppose $(a,b)$ is a tree ascent of $T$ and $\rot{T}{Z}{(a,b)}$. Then $a\in {_RT^b_j}$ for some $j<s(b)$. Let $g$ be the parent of $a$ so $a\in T^g_{s(g)}$ and $g\in T^b_j$ or $g=b$ and is the $j$th child of $b$. Let $m$ be the smallest element of $_LT^b_{j+1}$ which is still larger than $a$. It is possible $m=b$. Then $Z$ is the same as $T$ except for the following changes: $Z^g_{s(g)}=T^a_0$ if $g\neq b$ and $Z^b_j=T^a_0$ if $g=b$ instead of $T^a$, $Z^a_i= T^a_i$ for $0<i<s(a)$ if $s(a)>0$, $Z^a_{s(a)} = T^m_0$ if $m\neq b$ and $Z^a_{s(a)} = T^b_{j+1}$ if $m=b$, $Z^a_0$ is a leaf is a leaf if $s(a)>0$, and $Z^m_0=Z^a$ if $m\neq b$ and $Z^b_{j+1}=Z^a$ if $m=b$.
\end{remark}

\begin{figure}[H]
\centering
\subfigure{
\scalebox{.8}{
\begin{tikzpicture}[very thick]
  \node [style={draw,circle}]{b}
    child { node[style={draw,circle}] {e} 
        child { node[xshift=-10] {$T^e_{0,\dots,s(e)-1}$} }
        child[grow=down]{ {} 
            child[grow=down,dashed]{ node[solid,style={draw,circle}] {g} 
                child[solid]{ node[xshift=-15] {$T^g_{0,\dots,s(g)-1}$} }
                child[solid,grow=down] { node [style={draw,circle}] {a} 
                    child { node {$T^a_0$} }
                        child[grow=down]{ node {$T^a_{1,\dots,s(a)-1}$} }
                        child { node[style={draw,circle},xshift=-10] {} } 
                }
            }
        }
    }
    child { node[style={draw,circle}] {f} 
        child[grow=down] { {}
            child[dashed,grow=down] { node[solid,style={draw,circle}] {$m$}
                child[solid,grow=down] { node {$T^m_0$} }
                child[solid] { node[xshift=15] {$T^{m}_{1,\dots, s(m)}$} } 
            } 
        }
        child { node[xshift=10] {$T^f_{1,\dots,s(f)}$} } 
    };
\end{tikzpicture}
}}
\subfigure{
\begin{tikzpicture}[very thick]
\draw [thick] [->] (2,3) -- (3,3);
\node at (2.5,3) [above] {$(a,b)$};
\draw [white] (2,1) -- (3,1);
\end{tikzpicture}
}
\subfigure{
\scalebox{.8}{
\begin{tikzpicture}[very thick]
  \node [style={draw,circle}]{b}
    child { node[style={draw,circle}] {e} 
        child { node[xshift=-10] {$T^e_{0,\dots,s(e)-1}$} }
        child[grow=down]{ {} 
            child[grow=down,dashed]{ node[solid,style={draw,circle}] {g} 
                child[solid]{ node[xshift=-15] {$T^g_{0,\dots,s(g)-1}$} }
                child[solid,grow=down] { node {$T^a_0$} }
            }
        }
    }
    child { node[style={draw,circle}] {f} 
        child[grow=down] { {}
            child[dashed,grow=down] { node[solid,style={draw,circle}] {$m$}
                child[solid,grow=down] { node [style={draw,circle}] {a} 
                    child { node[style={draw,circle}] {} }
                        child[grow=down]{ node {$T^a_{1,\dots,s(a)-1}$} }
                        child { node {$T^m_0$} } 
                }
                child[solid] { node[xshift=15] {$T^{m}_{1,\dots, s(m)}$} } 
            } 
        }
        child { node[xshift=10] {$T^f_{1,\dots,s(f)}$} } 
    };
\end{tikzpicture}
}}  
\caption{Illustration of the $s$-tree rotation along the tree ascent $(a,b)$.}
\label{fig:genstreerotab}
\end{figure}
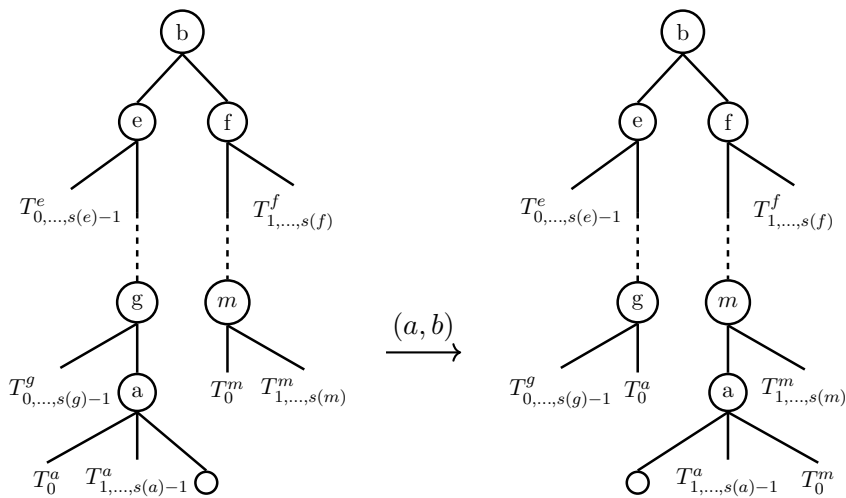

\begin{remark}
One might wonder if {\swo} is a Cambrian lattice of some finite Coxeter group. Cambrian lattices were defined by Reading in \cite{camblatticesreading2006} as certain lattice quotients of weak order. However, from {\swo} with $s=(0,0,2)$ (see \cref{fig:swoexamples}) we observe that {\swo} is not generally a Cambrian lattice of a finite Coxeter group. The Cambrian lattices of a finite Coxeter group $W$ all have order the Coxeter Catalan number $Cat(W)$. The only $W$ with $Cat(W)=9$ is the dihedral group $I_2(7)$ see \cite{latticetheory}. However, {\swo} with $s=(0,0,2)$ has largest anti-chain of cardinality 3 while the largest anti-chain in a Cambrian lattice of $I_2(7)$ has cardinality at most 2.
\end{remark}
\end{subsection}

\begin{subsection}{Background on the $s$-Tamari lattice}\label{sec:bkgdstam}

The Tamari lattice is the sublattice of weak order on permutations generated by the $231$-avoiding permutations. Similarly, the $s$-Tamari lattice is the sublattice of {\swo} generated by certain {\sdt}s. 

\begin{definition}\label{def:stamaritree}\cite[Definition 3.1]{swkordceballospons2019}
An {\sdt} $T$ is called an \textbf{$s$-Tamari tree} if for any $a<b<c$, $\card{T}{c}{a} \leq \card{T}{c}{b}$ where $\card{T}{c}{a}$ is as defined in \cref{def:treeinversions}. That is, all of the vertex labels in $T^c_i$ are smaller than all of the vertex labels in $T^c_j$ for $i<j$. The multi-inversion set of an $s$-Tamari tree is called an $s$-Tamari inversion set.
\end{definition}

We denote the partial order on $s$-Tamari trees induced by {\swo} by $\boldsymbol{\preceq_{Tam}}$. Similarly, a subscript $Tam$ will be used to denote objects in the $s$-Tamari lattice. For instance, $[T,Z]_{Tam}$ is the closed interval from $T$ to $Z$ in the $s$-Tamari lattice.

\begin{theorem}\label{thm:stamarilattice}\cite[Theorem 3.2]{swkordceballospons2019}
The collection of $s$-Tamari trees forms a sublattice of {\swo}, called the $s$-Tamari lattice.
\end{theorem}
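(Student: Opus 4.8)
The plan is to prove that the set of \stt{s} is closed under both the join and the meet of \swo; since a sublattice is by definition a subset closed under the ambient join and meet, this suffices. Throughout I use the componentwise reformulation of \cref{def:stamaritree}: a multi-inversion set $I$ with $\card{I}{y}{x}\le s(y)$ is an $s$-Tamari inversion set exactly when it satisfies the transitivity and planarity of \cref{prop:invchartrees} together with the \emph{Tamari inequality} $\card{I}{c}{a}\le\card{I}{c}{b}$ for all $a<b<c$.

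First the join. Let $T,Z$ be \stt{s} and set $U=\inv{T}\cup\inv{Z}$, so $\card{U}{y}{x}=\max\{\card{T}{y}{x},\card{Z}{y}{x}\}$. The union satisfies the Tamari inequality: for $a<b<c$ we have $\card{T}{c}{a}\le\card{T}{c}{b}$ and $\card{Z}{c}{a}\le\card{Z}{c}{b}$, whence $\card{U}{c}{a}=\max\{\card{T}{c}{a},\card{Z}{c}{a}\}\le\max\{\card{T}{c}{b},\card{Z}{c}{b}\}=\card{U}{c}{b}$. By \cref{thm:latticejoin}, $\inv{T\vee Z}=\tcu{\inv T}{\inv Z}=U^{tc}$, so it remains to prove the key lemma: \emph{the transitive closure of a Tamari multi-inversion set is again Tamari.}

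For the key lemma I would compute $U^{tc}$ by a controlled sequence of updates applied to a running set $I$ (initially $I=U$), carrying the Tamari inequality as an invariant. Process the bottom elements $a=n-1,n-2,\dots,1$ in decreasing order; for fixed bottom $a$, sweep $b=a+1,a+2,\dots$ and, whenever $\card{I}{b}{a}>0$, reset $\card{I}{c}{a}\leftarrow\max\{\card{I}{c}{a},\card{I}{c}{b}\}$ for every $c>b$. Each update writes only in ``row $a$'' (pairs with bottom $a$) and reads only rows with bottom $b>a$, which have already been finalized; hence the process enforces exactly the transitivity of \cref{prop:invchartrees} and terminates in $U^{tc}$. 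To see a single update $\card{I}{c}{a}\leftarrow v:=\card{I}{c}{b}$ preserves the Tamari inequality, note it affects only conditions involving the pair $(c,a)$. When $(c,a)$ is the larger-bottom entry of a triple $(a',a,c)$, the inequality $\card{I}{c}{a'}\le\card{I}{c}{a}$ only becomes easier. When $(c,a)$ is the smaller-bottom entry of a triple $(a,b',c)$ with $a<b'<c$, I must verify $v\le\card{I}{c}{b'}$: if $b'\ge b$ this is the Tamari inequality on $(b,b',c)$ among the finalized larger rows; if $a<b'<b$, then the current Tamari inequality on $(a,b',b)$ together with $\card{I}{b}{a}>0$ forces $\card{I}{b}{b'}>0$, and transitivity of the finalized row $b'$ on $(b',b,c)$ then gives $\card{I}{c}{b'}\ge\card{I}{c}{b}=v$. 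Thus the invariant survives, $U^{tc}$ is Tamari, and $T\vee Z$ is an \stt{}. Choosing the processing order so that the transitivity needed in this last case is already available is the main obstacle.

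Finally the meet. Put $D=\inv T\cap\inv Z$, i.e. $\card{D}{y}{x}=\min\{\card{T}{y}{x},\card{Z}{y}{x}\}$; the same max/min manipulation shows $D$ satisfies the Tamari inequality, and moreover $D$ is \emph{transitive}, since $\card{D}{b}{a}>0$ forces $\card{T}{b}{a},\card{Z}{b}{a}>0$, whence $\card{T}{c}{a}\ge\card{T}{c}{b}$ and $\card{Z}{c}{a}\ge\card{Z}{c}{b}$, and the minimum respects these. As every lower bound of $T$ and $Z$ has inversion set contained in $D$, the meet $T\wedge Z$ is the unique maximal \emph{valid} $s$-tree inversion set contained in $D$; because $D$ is already transitive, passing from $D$ to $T\wedge Z$ only deletes inversions to restore planarity. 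I would finish by the argument dual to the key lemma, a ``planar co-closure'' that removes inversions row-by-row while preserving the Tamari inequality, proving that the maximal planar subset of a transitive Tamari set is Tamari. Equivalently, argue by contradiction: if $\card{W}{c}{a}>\card{W}{c}{b}$ for $W=T\wedge Z$, then $\card{W}{c}{a}\le\card{T}{c}{a}\le\card{T}{c}{b}$ and $\card{W}{c}{a}\le\card{Z}{c}{b}$, so raising $\card{}{c}{b}$ to $\card{W}{c}{a}$ keeps one inside the transitive set $D$; its transitive closure stays inside $D$ (as $D$ is transitive) and strictly dominates $W$, contradicting maximality once planarity is reinstated. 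Reinstating planarity while preserving the Tamari inequality is the meet-analogue of the obstacle above; combining join- and meet-closure yields \cref{thm:stamarilattice}.
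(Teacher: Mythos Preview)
This theorem is stated in the paper as a background result cited from Ceballos and Pons \cite[Theorem 3.2]{swkordceballospons2019}; the paper does not supply its own proof. There is therefore no proof in the paper to compare your attempt against.

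For what it is worth, your overall strategy (show that the Tamari inequality $\card{I}{c}{a}\le\card{I}{c}{b}$ is preserved under the join and meet of \swo) is the natural one and is essentially how Ceballos and Pons proceed in the cited reference. Your join argument is close to complete: the observation that the componentwise maximum preserves the Tamari inequality is correct, and your inductive scheme for showing that transitive closure preserves it is on the right track, though you would need to be a bit more careful that the sweep over $b$ in increasing order really does catch every new positivity created in row $a$ by earlier updates (it does, since updates only raise entries). Your meet argument is considerably sketchier: the passage from the transitive, Tamari-satisfying intersection $D$ down to the actual $s$-tree inversion set $\inv{T\wedge Z}$ is not just ``restoring planarity,'' and your contradiction argument at the end does not quite close, since raising $\card{}{c}{b}$ may destroy planarity elsewhere and you have not shown that the resulting set, once made into a genuine $s$-tree inversion set, still strictly dominates $W$. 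In the original reference this is handled by an explicit description of the meet dual to \cref{thm:latticejoin}.
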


\begin{remark}\label{rmk:tamarilattice}
Taking $s=(1,\dots,1)$, the $s$-Tamari lattice is isomorphic to the classical Tamari lattice on $l(s)$.
\end{remark}

Similarly to {\swo}, there is a notion of ascent for {\stt}s and cover relations in the $s$-Tamari lattice are characterized as certain tree rotations along these ascents. 
For $a<b$, we say that $(a,b)$ is a \textbf{Tamari tree ascent} of $T$ if $a$ is a non-right most child of $b$, that is, $a$ is a direct descendant of $b$ and $\card{T}{b}{a}<s(b)$. Note that in the {\staml}, $T^a_{s(a)}$ need not be a leaf for some $(a,b)$ to be a Tamari tree ascent. We denote cover relations in the $s$-Tamari lattice by $\precdot_{Tam}$.

\begin{theorem}\label{thm:stamaricovers}\cite[Section 3.1]{swkordceballospons2019}
Let $T$ be an $s$-Tamari tree and let $(a,b)$ be a Tamari tree ascent of $T$. Then $\tcp{\inv{T}}{(b,a)}$ is an $s$-Tamari inversion set. Let $Z$ be the {\stt} such that $\inv{Z}=\tcp{\inv{T}}{(b,a)}$. We say $Z$ is the \textbf{$s$-Tamari rotation of $T$ along $(a,b)$} and write $\boldsymbol{\trot{T}{Z}{(a,b)}}$. Moreover, $T\precdot_{Tam} Z$ if and only if there is a unique Tamari tree ascent $(a,b)$ of $T$ such that $\trot{T}{Z}{(a,b)}$.
\end{theorem}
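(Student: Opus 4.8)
The plan is to prove the three assertions in turn: first that $J := \tcp{\inv{T}}{(b,a)}$ is a genuine $s$-Tamari inversion set, so that $Z$ is well defined; then that the resulting rotation is a cover $T \precdot_{Tam} Z$; and finally that every cover arises from a unique Tamari tree ascent in this way. The first step I would take throughout is to describe $J - \inv{T}$ explicitly. Since $(a,b)$ is a Tamari tree ascent, $a$ is the $i$-th child of $b$ for some $i < s(b)$, so $\card{T}{b}{a} = i$. Because $a$ is a direct descendant of $b$, every $d \in T^a$ satisfies $d < a$ and, by \cref{rmk:biggercontain}, $\card{T}{c}{d} = \card{T}{c}{a} = \card{T}{c}{b}$ for all $c > b$; hence adding $(b,a)$ forces no transitive consequence in any row $c > b$. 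The only consequences occur in row $b$: the seed raises $\card{T}{b}{a}$ from $i$ to $i+1$, and transitivity then forces $\card{T}{b}{d}$ up to $i+1$ for each $d \in T^a$ with $\card{T}{a}{d} > 0$. I would then check that this new multi-inversion set, which differs from $\inv{T}$ only in row $b$ and only by $+1$ on a set of entries $(b,x)$ with $x \le a$, satisfies the bound $\le s(b)$ (immediate from $i+1 \le s(b)$), transitivity and planarity of \cref{prop:invchartrees}, and the monotonicity condition of \cref{def:stamaritree}. An appealing shortcut for the Tamari membership is to exhibit $Z$ as a join $T \vee W$ of two $s$-Tamari trees and invoke \cref{thm:stamarilattice}; but the direct triple-by-triple verification is routine once the explicit description of $J - \inv{T}$ is in hand.

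For the cover property, note first that $\inv{T} \subsetneq J = \inv{Z}$ gives $T \prec_{Tam} Z$. Now suppose $Y$ is an $s$-Tamari tree with $\inv{T} \subseteq \inv{Y} \subseteq \inv{Z}$, and split on whether the seed is raised. If $\card{Y}{b}{a} \ge i+1$, then $\inv{T} + (b,a) \subseteq \inv{Y}$, and since $\inv{Y}$ is transitive it must contain $\tcp{\inv{T}}{(b,a)} = \inv{Z}$, forcing $\inv{Y} = \inv{Z}$. If instead $\card{Y}{b}{a} = i$ (the only other option, as $\inv{Y} \supseteq \inv{T}$), then I claim $\inv{Y} = \inv{T}$: the only entries on which $\inv{Y}$ could exceed $\inv{T}$ are the row-$b$ entries $(b,d)$ with $d \in T^a$, but each such $d$ satisfies $d < a < b$, so the Tamari monotonicity condition of \cref{def:stamaritree} forces $\card{Y}{b}{d} \le \card{Y}{b}{a} = i = \card{T}{b}{d}$, leaving no room for an increase. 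Hence no strict intermediate exists and $T \precdot_{Tam} Z$. The crucial point is that it is precisely the Tamari defining inequality, rather than transitivity or planarity, that glues the added inversions to the seed; without it one could raise $\card{\cdot}{b}{d}$ for $d \in T^a_{s(a)}$ independently of the seed.

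For the converse and uniqueness, suppose $T \precdot_{Tam} Z$ is an arbitrary cover. The forward analysis shows that the new inversions of a single rotation all lie in one row, so the first thing to establish is that $\inv{Z} - \inv{T}$ is supported in a single row $b$; this should follow because a change confined to the smallest row occurring in $\inv{Z} - \inv{T}$ already produces a valid $s$-Tamari tree $\preceq_{Tam} Z$, which by the cover hypothesis must be $Z$ itself. Letting $a$ be the minimal label with $(b,a) \in \inv{Z} - \inv{T}$, I would show that $a$ is a non-rightmost child of $b$ in $T$, so that $(a,b)$ is a Tamari tree ascent, and that $\trot{T}{Z'}{(a,b)}$ yields $Z' \preceq_{Tam} Z$, whence $Z' = Z$ by the cover property just proved. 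Uniqueness is then automatic: the ascent is recovered from the pair $(T,Z)$ as the minimal element of the single supporting row of $\inv{Z} - \inv{T}$, so any two Tamari tree ascents rotating $T$ to the same $Z$ must coincide.

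I expect the genuine difficulty to be concentrated in this converse direction, specifically in proving that $\inv{Z} - \inv{T}$ is supported in a single row and that the extracted $(a,b)$ really is a Tamari tree ascent rather than merely a new inversion. The forward cover direction, by contrast, is clean once one recognizes that the Tamari monotonicity condition is exactly the mechanism preventing intermediate trees, and well-definedness reduces to the explicit computation of $J - \inv{T}$. It is worth emphasizing that the whole argument leans on the fact that a Tamari tree ascent need not satisfy condition (iii) of a \swo{} tree ascent, so one cannot simply import \cref{thm:covers}; the row-$b$ confinement of the rotation is what replaces that theorem here.
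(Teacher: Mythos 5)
There is nothing in the paper to compare against here: \cref{thm:stamaricovers} is quoted as background and attributed to Ceballos and Pons, so the paper contains no proof of this statement. Judged on its own terms, your first two steps are sound. Your computation of $\tcp{\inv{T}}{(b,a)}-\inv{T}$ as the row-$b$ entries $(b,e)$ with $e\in\{a\}\cup T^a\setminus 0$ agrees with what the paper later records as $A^{Tam}_T(a,b)$ in \cref{lem:taminversionsadded}, and your cover argument --- splitting on whether $\card{Y}{b}{a}$ equals $i$ or exceeds it, and using the Tamari inequality $\card{Y}{b}{d}\leq \card{Y}{b}{a}$ for $d\in T^a$ to force any intermediate $Y$ down to $T$ --- is correct, and correctly isolates the monotonicity condition of \cref{def:stamaritree} as the mechanism that rules out intermediate trees.

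The converse direction, which you yourself flag as the hard part, has genuine gaps. First, your justification that $\inv{Z}-\inv{T}$ is supported in a single row --- that confining the change to the smallest changed row ``already produces a valid $s$-Tamari tree'' --- is not established, and the natural reading of it fails: the hybrid multi-inversion set that takes row $b$ from $\inv{Z}$ and all rows $c'>b$ from $\inv{T}$ need not be transitive, because raising $\card{\cdot}{b}{a'}$ from $0$ to a positive value forces $\card{\cdot}{c'}{a'}\geq \card{\cdot}{c'}{b}$ for $c'>b$, an inequality which may hold in $Z$ only because row $c'$ was also changed. You need a different intermediate object; the standard route is to exhibit, for any $T\prec_{Tam} Z$, a Tamari tree ascent $(a,b)$ of $T$ with $\tcp{\inv{T}}{(b,a)}\subseteq \inv{Z}$ directly. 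Second, you extract the ascent as the \emph{minimal} label $a$ with $(b,a)\in\inv{Z}-\inv{T}$; this is the wrong end. By your own forward computation the changed entries are $(b,e)$ for $e\in\{a\}\cup T^a\setminus 0$, and the child of $b$ among these is the \emph{maximal} label --- the minimal one is typically a deep descendant of $a$, hence not a child of $b$ and not part of any Tamari tree ascent. The same slip propagates into your uniqueness argument. Finally, even with the extremal choice corrected, the claims that this $a$ really is a non-rightmost child of $b$ in $T$ and that the rotation along $(a,b)$ lands exactly at $Z$ are left at the level of ``I would show,'' so the if-and-only-if in the statement is not yet proved.
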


An $s$-Tamari rotation is essentially the same as an {\str} except that the smaller element of the Tamari tree ascent may have right descendants and those right descendants are moved with along with $a$ if $s(a)>0$. An $s$-Tamari rotation is illustrated in \cref{fig:gentamarirotab}.

\begin{remark}\label{rmk:stamaritreepic}
Similarly to {\str}s, we describe $s$-Tamari rotations in terms of an operation on the trees themselves. Suppose that $(a,b)$ is a Tamari tree ascent of $T$ and $\trot{T}{Z}{(a,b)}$. Then $a\in T^b_j$ for some $j<s(b)$ and $a$ is a child of $b$. Recall that every labeled vertex of $T^b_{j+1}$ is greater than $a$ since $T$ is an $s$-Tamari tree. Let $m$ be the smallest labeled vertex of $_LT^b_{j+1}$. Then $Z$ is the same as $T$ except for the following: $Z^b_j=T^a_0$ instead of $T^a$, $Z^a_i=T^a_i$ for $0<i\leq s(a)$ if $s(a)>0$, $Z^a_0$ is a leaf, $Z^m_0=Z^a$. 
\end{remark}

\begin{remark}\label{rmk:nosamebottomtamascents}
An $s$-Tamari tree $T$ cannot have Tamari tree ascents $(a,b)$ and $(a,c)$ with $b\neq c$. This follows from the fact that in a rooted tree, every non-root node has exactly one parent. Thus, whenever $(a,b)$ and $(c,d)$ are distinct Tamari tree ascents of $T$, we may assume $a<c$. We make this assumption throughout our proofs.
\end{remark}

\begin{figure}[H]
\centering
\subfigure{
\scalebox{.8}{
\begin{tikzpicture}[very thick]
  \node [style={draw,circle}]{b}
    child { node[style={draw,circle}] {a} 
        child { node[xshift=-10] {$T^a_{0}$ } }
        child[grow=down]{ node {$T^a_{1,\dots, s(a)}$ } } 
        }
    child { node[style={draw,circle}] {f} 
        child[grow=down] { {}
            child[dashed,grow=down] { node[solid,style={draw,circle}] {$m$}
                child[solid,grow=down] { node[style={draw,circle}] {} }
                child[solid] { node[xshift=15] {$T^{m}_{1,\dots, s(m)}$} } 
            } 
        }
        child { node[xshift=10] {$T^f_{1,\dots,s(f)}$} } 
    };
\end{tikzpicture}
}}
\subfigure{
\begin{tikzpicture}[very thick]
\draw [thick] [->] (2,3) -- (3,3);
\node at (2.5,3) [above] {$Tam(a,b)$};
\draw [white] (2,1) -- (3,1);
\end{tikzpicture}
}
\subfigure{
\scalebox{.8}{
\begin{tikzpicture}[very thick]
 \node [style={draw,circle}] {b}
    child { node {$T^a_0$} }
    child { node[style={draw,circle}] {f} 
        child[grow=down] { {}
            child[dashed,grow=down] { node[solid,style={draw,circle}] {$m$}
                child[solid,grow=down] { node[style={draw,circle}] {a}
                    child[solid,grow=down] { node[style={draw,circle}] {} }
                    child{ node[xshift=10] {$T^a_{1,\dots,s(a)}$} }
                    }
                child[solid] { node[xshift=15] {$T^{m}_{1,\dots, s(m)}$} } 
            } 
        }
        child { node[xshift=10] {$T^f_{1,\dots,s(f)}$} } 
    };
\end{tikzpicture}
}}  
\caption{$s$-Tamari rotation along the Tamari tree ascent $(a,b)$.}
\label{fig:gentamarirotab}
\end{figure}
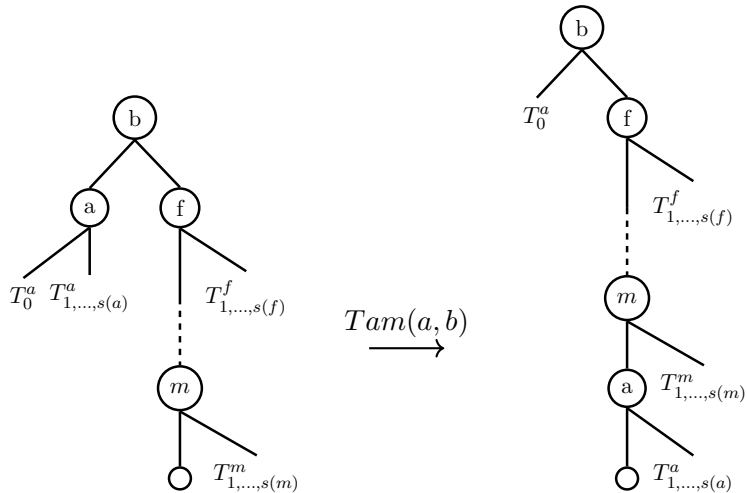

\end{subsection}

\begin{subsection}{Background on SB-labelings} \label{sec:sblabels}
Hersh and M{\'e}sz{\'a}ros developed the notion of an SB-labeling in \cite{sblabelhershmeszaros2017} to show when certain lattices have open intervals which are homotopy balls or spheres.

\begin{definition}\label{def:sblabeling}\cite[Definition 3.4]{sblabelhershmeszaros2017}
An \textbf{SB-labeling} is an edge labeling $\lambda$ on a finite lattice $L$ satisfying the following conditions for each $u, v,w \in L$ such that $v$ and $w$ are distinct elements which each cover $u$:
\begin{enumerate}
    \item [(i)] $\lambda(u,v)\neq\lambda(u,w)$
    \item [(ii)] Each saturated chain from $u$ to $v\vee w$ uses both of these labels $\lambda(u, v)$ and $\lambda(u,w)$ a positive number of times.
    \item [(iii)] None of the saturated chains from $u$ to $v\vee w$ use any other labels besides $\lambda(u, v)$ and $\lambda(u,w)$.
\end{enumerate}
\end{definition}

One of Hersh and M{\'e}sz{\'a}ros' main theorems in \cite{sblabelhershmeszaros2017} is the following characterization of the homotopy types of intervals in a lattice which admits an SB-labeling.

\begin{theorem}\label{thm:sbthm}\cite[Theorem 3.7]{sblabelhershmeszaros2017}
If $L$ is a finite lattice which admits an SB-labeling, then each open interval $(u,v)$ in $L$ is homotopy equivalent to a ball or a sphere of some dimension. Moreover, $\Delta(u, v)$ is homotopy equivalent to a sphere if and only if $v$ is a join of atoms of $[u,v]$, in which case it is homotopy equivalent to a sphere $S^{d-2}$ where $d$ is the number of atoms in $[u,v]$.
\end{theorem}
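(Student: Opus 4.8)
The plan is to determine the homotopy type of $\Delta(u,v)$ by applying the Crosscut Theorem (of Björner) to the closed interval $[u,v]$, which is itself a finite lattice with bottom $u$ and top $v$. Write $A=\set{a_1,\dots,a_d}$ for the set of atoms of $[u,v]$, i.e. the elements covering $u$ that lie weakly below $v$. The Crosscut Theorem asserts that $\Delta(u,v)$ is homotopy equivalent to the crosscut complex $\Gamma(A)$: the abstract simplicial complex on vertex set $A$ whose faces are exactly the subsets $X\subseteq A$ with $\bigvee X\neq v$. (The empty set is always a face since $u<v$, and each singleton is a face unless $v$ is itself an atom.) So the entire problem reduces to deciding which subsets of $A$ join to $v$, and the role of the SB-conditions will be to pin this down.

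The key lemma, and what I expect to be the main obstacle, is that the join map $X\mapsto\bigvee X$ is injective on subsets of $A$; equivalently, for every $X\subseteq A$ the atoms of $[u,v]$ lying below $\bigvee X$ are exactly the elements of $X$. I would prove this by induction on $\abs{X}$. The cases $\abs{X}\le 1$ are immediate. For $X=\set{a,b}$ the SB-conditions give it directly: any saturated chain from $u$ to $a\vee b$ begins at a cover of $u$ whose label, by condition (iii), is one of $\lambda(u,a),\lambda(u,b)$, hence by condition (i) that cover is $a$ or $b$; and since any atom $c\le a\vee b$ extends (via a maximal chain of $[c,a\vee b]$) to such a saturated chain starting at $c$, we get $c\in\set{a,b}$, so $a,b$ are the only atoms below $a\vee b$. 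The inductive step is where the real work lies: one must apply conditions (ii) and (iii) not only at $u$ but at intermediate joins to show that adjoining a new atom $a_k$ to $X'=X\setminus\set{a_k}$ produces no unexpected atoms below $\bigvee X=\left(\bigvee X'\right)\vee a_k$, so that $[u,\bigvee X]$ behaves like the Boolean interval $2^{X}$ at the level of atoms. Controlling repeated label occurrences along longer saturated chains, and pushing the purely local two-cover conditions up to joins of many atoms, is the crux.

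Granting the lemma, the dichotomy falls out cleanly. First, $v$ is a join of atoms of $[u,v]$ if and only if $v=\bigvee A$: if $v=\bigvee X$ for some $X\subseteq A$ then $v=\bigvee X\le\bigvee A\le v$, and the converse is trivial. If $v\neq\bigvee A$, then $A$ satisfies $\bigvee A\neq v$, so $A$ is a face of $\Gamma(A)$; hence $\Gamma(A)$ is the full simplex on $A$, which is a ball, and $\Delta(u,v)$ is homotopy equivalent to a ball. If instead $v=\bigvee A$, then $A$ is not a face, while the injectivity lemma gives $\bigvee X\neq\bigvee A=v$ for every proper $X\subsetneq A$, so every proper subset of $A$ is a face. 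Thus $\Gamma(A)$ is precisely the boundary of the $(d-1)$-simplex on the $d=\abs{A}$ vertices of $A$, which is homeomorphic to $S^{d-2}$. This simultaneously yields the sphere-versus-ball characterization and the stated dimension, completing the proof.
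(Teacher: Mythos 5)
This statement is quoted background: the paper cites it to Hersh and M{\'e}sz{\'a}ros and supplies no proof of its own, so there is no internal argument to compare against. That said, your strategy---apply the crosscut theorem to the atom set $A$ of the lattice $[u,v]$, and reduce everything to the claim that for each $X\subseteq A$ the atoms of $[u,v]$ lying below $\bigvee X$ are exactly the elements of $X$---is the right one, and it is essentially how the theorem is proved in the cited source. Your endgame is also correct: once the lemma is granted, the crosscut complex is the full simplex on $A$ when $\bigvee A\neq v$ (a ball) and the boundary of the $(d-1)$-simplex when $\bigvee A=v$ (an $S^{d-2}$), and ``$v$ is a join of atoms of $[u,v]$'' is correctly identified with $v=\bigvee A$.

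The gap is the one you flag yourself: the inductive step of the key lemma is the entire mathematical content of the theorem, and it is asserted rather than proved. Conditions (ii) and (iii) of \cref{def:sblabeling} only constrain saturated chains from $u$ to the join of \emph{two} covers of $u$, and nothing in your text explains how those purely local conditions propagate to $\bigvee X$ when $|X|\geq 3$. The standard device for closing this is to strengthen the induction hypothesis from the atom statement to a label statement: every saturated chain from $u$ to $\bigvee X$ uses each label $\lambda(u,a)$ for $a\in X$ at least once and uses no other labels. The atom statement then follows immediately, since an atom $c\leq\bigvee X$ begins a saturated chain whose first label is $\lambda(u,c)$, forcing $c\in X$ by condition (i). But the induction for the label statement itself still requires a genuine argument---one must analyze a saturated chain from $u$ to $\bigl(\bigvee X'\bigr)\vee a_k$ by comparing it with chains through intermediate joins, applying (ii) and (iii) at elements other than $u$---and until that step is written out the proposal is a correct plan rather than a proof. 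Nothing in the plan would fail, but the crux is missing.
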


We will use this theorem to draw our topological conclusions.

\end{subsection}

\end{section}

\begin{section}{An SB-labeling of $s$-weak order} \label{sec:proofs}

In this section, we prove a series of lemmas on \sdt{s} and multi-inversion sets which we then use to prove that the following edge labeling of {\swo} is an SB-labeling as \cref{thm:sblabeling}. We introduce many of the lemmas with a short more intuitive description of the lemma and its proof as well as give reference to an example. In that spirit, we label a cover relation in {\swo} by taking the unique tree ascent (pair of distinct labeled vertices) corresponding to the cover relation by \cref{thm:covers} and use the smaller of the two elements of the tree ascent as the label, that is we label cover relations by the label of the root vertex of the subtree moved to achieve the cover relation. \cref{fig:swoexamples} includes two examples of our labeling of {\swo}.

\begin{definition}\label{thm:sblabelingsection}
Let $T\precdot Z$ be a cover relation in {\swo}. Let $\rot{T}{Z}{(a,b)}$ be the {\str} of $T$ along the unique tree ascent $(a,b)$ associated to $T\precdot Z$ by \cref{thm:covers}. Define $\lambda$ to be the edge labeling of {\swo} given by $\lambda(T, Z)= a$. 

The notion of tree ascent is defined in \cref{def:treeascent}. The notation $\rot{T}{Z}{(a,b)}$ and corresponding notion of $s$-tree rotation are given in \cref{def:givesstreestrot} and \cref{rmk:treepicture}.
\end{definition}

\begin{remark}
In the case $s=(1,\dots,1)$, the SB-labeling of \cref{thm:sblabelingsection} gives an SB-labeling of weak order on $S_{l(s)}$. Our labeling is distinct from the one given for symmetric groups by Hersh and M{\'e}sz{\'a}ros in \cite{sblabelhershmeszaros2017}.
\end{remark}

The main point of our proof that \cref{thm:sblabelingsection} is an SB-labeling of {\swo} is showing that for any $T\precdot Z,Q$, the Hasse diagram of the interval $[T,Z\vee Q]$ is a diamond, a pentagon, or a hexagon. Examples of all three types of such intervals, as well as the underlying reasons they occur which have to do with relationships between tree ascents that are explained in later lemmas, can be seen in \cref{fig:swoexamples}. In particular, $[T,Z\vee Q]$ has precisely two maximal chains. Then we verify that, in any case, the labeling on the two maximal chains satisfies \cref{def:sblabeling}. Many of our proofs are easier with \cref{fig:genstreerotab} and \cref{rmk:treepicture} in mind so it is worth a few moments to internalize those.

The following proposition restricts the possible tree ascents of an {\sdt}. In particular, if $(a,b)$ is a tree ascent of some {\sdt} $T$ with $s(a)>0$, then no labeled vertices of $T^a$ besides $a$ can form a tree ascent with $b$. For instance, $(5,9)$ is a tree ascent of the {\sdt} in \cref{fig:sdtexample}, but no vertex below $5$ forms a tree ascent with $9$ because the rightmost child of $5$ must be a leaf. We use this to characterize the multi-inversion set of $Z\vee Q$ for any $T\precdot Z,Q$ and to restrict the chains that can appear in $[T, Z\vee Q]$.

\begin{proposition}\label{lem:nolowerascents}
Let $T$ be an {\sdt} and let $1\leq a<b \leq n$ be such that $(a,b)$ is a tree ascent of $T$ with $s(a)>0$. Then no pair of the form $(e,b)$ such that $e\in T^a$ and $e<a$ is a tree ascent of $T$. 
\end{proposition}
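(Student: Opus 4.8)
The plan is to prove the contrapositive-flavored statement directly by assuming for contradiction that some $(e,b)$ with $e \in T^a$, $e < a$, is also a tree ascent, and then derive a conflict with the tree-ascent conditions using the hypothesis $s(a)>0$. The central tool will be \cref{rmk:ascentsubtree}, which says conditions (i) and (ii) of \cref{def:treeascent} together are equivalent to $e \in {_RT^b_i}$ for some $0 \le i < s(b)$. First I would record what the hypotheses on $(a,b)$ give us: since $(a,b)$ is a tree ascent with $s(a)>0$, condition (iii) forces $T^a_{s(a)}$ to be a leaf. Since $e \in T^a$ and $e<a$, the vertex $e$ lies in one of the subtrees $T^a_0, \dots, T^a_{s(a)}$; because $T^a_{s(a)}$ is a leaf, in fact $e \in T^a_i$ for some $i < s(a)$, so $\card{T}{a}{e} = i < s(a)$ by \cref{def:treeinversions} (and $e$ is genuinely below $a$, not left or right of it).

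Next I would use \cref{rmk:ascentsubtree} applied to the putative ascent $(e,b)$: it must satisfy condition (ii), so for every $e' $ with $e < e' < b$ and $e \in T^{e'}_j$, we need $j = s(e')$. The key instance is $e' = a$: we have $e < a < b$ and $e \in T^a$, so condition (ii) for $(e,b)$ demands $\card{T}{a}{e} = s(a)$. But we just established $\card{T}{a}{e} < s(a)$, since $e$ sits strictly inside $T^a$ in a non-rightmost subtree (the rightmost being a leaf). This is the contradiction, and it shows no such $(e,b)$ can be a tree ascent.

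The one gap to fill carefully is confirming that $a$ really is an intermediate vertex on the relevant path, i.e. that $a \in T^b$ and $a \in T^{e'}$ for the specific $e' = a$ — the latter is immediate since $e \in T^a$ means $a$ is the root of a subtree containing $e$ — and that the inequality $e < a < b$ holds, which is given. I should also make sure condition (ii) of \cref{def:treeascent} is being read with the correct indexing: it concerns the subtree index $j$ for which $e \in T^{e'}_j$, and for $e' = a$ that index is exactly $\card{T}{a}{e}$ (using \cref{rmk:guarantreecontain} to identify the containing subtree when the cardinality is strictly between $0$ and $s(a)$, and handling the boundary case $\card{T}{a}{e}=0$ separately, where $e \in T^a_0$ directly contradicts $j = s(a)$ since $s(a) > 0$).

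The main obstacle I anticipate is not the logical skeleton, which is short, but rather bookkeeping the degenerate cases of $\card{T}{a}{e}$: the definition of cardinality in \cref{def:treeinversions} splits into three cases ($0$, strictly interior, and $s(a)$), and I must verify in each that $e \in T^a$ with $T^a_{s(a)}$ a leaf forces $\card{T}{a}{e} < s(a)$ rather than $= s(a)$. The cleanest way is to observe that $T^a_{s(a)}$ being a leaf means $e \notin T^a_{s(a)}$, and since $e \in T^a$ is a labeled descendant of $a$ (not left or right of $a$ within $T^b$), the only possibility excluded is exactly the top case, giving $\card{T}{a}{e} \le s(a) - 1 < s(a)$, which is precisely what condition (ii) for $(e,b)$ forbids.
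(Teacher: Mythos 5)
Your argument is correct and is essentially the paper's own proof run in the opposite direction: the paper derives $e\in T^a_{s(a)}$ from condition (ii) of $(e,b)$ and then contradicts condition (iii) of $(a,b)$, while you start from condition (iii) forcing $T^a_{s(a)}$ to be a leaf and then contradict condition (ii) of $(e,b)$ at the intermediate vertex $a$. The extra bookkeeping about cardinalities versus subtree indices is harmless but not needed, since condition (ii) is stated directly in terms of the subtree index.
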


\begin{proof}
Let $(a,b)$ be a tree ascent of $T$. Assume $(e,b)$ is also a tree ascent of $T$ with $e\in T^a$ and $e<a$. Then $e\in T^a_{s(a)}$ by (ii) of \cref{def:treeascent} of $(e,b)$ being a tree ascent of $T$ because $e<a<b$. Thus, $T^a_{s(a)}$ is not a leaf.  However, since $s(a)>0$, this contradicts (iii) of \cref{def:treeascent} of $(a,b)$ being a tree ascent of $T$. Thus, such a pair $(e,b)$ is not a tree ascent of $T$.
\end{proof}

\begin{remark}\label{rmk:0lowerascent}
If $s(a)=0$, it is possible that $(a,b)$ and $(e,b)$ for some $e\in T^a$ with $e<a$ are both tree ascents of $T$.

The situation precluded by \cref{lem:nolowerascents} may occur if $s(a)=0$.
\end{remark}

We use the following two definitions to describe $Z\vee Q$ for any $T\precdot Z,Q$ in terms of tree inversion sets.

\begin{definition}\label{def:invsaddedset}
Let $T$ be a {\sdt} and let $1\leq a<b \leq n$ be such that $(a,b)$ is a tree ascent of $T$. Let $Z$ be the {\sdt} obtained by $\rot{T}{Z}{(a,b)}$. Define \textbf{the set of inversions added by the {\str} along $\boldsymbol{(a,b)}$}, denoted $\boldsymbol{A_T(a,b)}$, by \[A_T(a,b) =\sett{(f,e)}{\card{Z}{f}{e}>\card{T}{f}{e}}. \]
\end{definition}

\begin{definition}\label{def:tertaddedinvs}
Let $T$ be an {\sdt}. Let $(a,b)$ and $(c,d)$ be tree ascents of $T$ with $a<c$. We note that $b$ and $d$ are determined by \cref{rmk:nosamebottomascents} once we know $a$ and $c$ are each the smaller element of a tree ascent. Define the following set valued function: \[F_T(a,c) =\begin{cases} \sett{(d,e)}{e\in T^a\setminus 0} & \text{ if } b=c \text{ and }a\in T^c_0 \\
\\
\emptyset & \text{otherwise}
\end{cases} \]
\end{definition}

\begin{example}\label{ex:exaddedinvs}
Let $T$ be the {\sdt} in \cref{fig:sdtexample}. As we saw in \cref{ex:treeascentsfromfig1}, $(5,9)$ and $(4,5)$ are both tree ascents of $T$. Also, $4\in T^5_0$. If we perform the {\str} of $T$ along $(5,9)$ using \cref{rmk:treepicture}, we observe that $A_T(5,9)=\{(9,5)\}$ and $A_T(4,5)=\{(5,1),(5,2),(5,4)\}$. Also, by definition $F_T(4,5)=\{(9,1),(9,2),(9,4)\}$.
\end{example}

In the next proposition, we explicitly compute the tree inversions added by an {\str}, that is, $A_T(a,b)$ from \cref{def:invsaddedset}. The proposition can be verified on \cref{ex:exaddedinvs} above.

\begin{proposition}\label{lem:inversionsadded}
Let $T$ be an {\sdt} and let $1\leq a<b \leq n$ be such that $(a,b)$ is a tree ascent of $T$. Suppose $\rot{T}{Z}{(a,b)}$. Then for $1\leq e < f\leq n$, $(f,e) \in A_T(a,b)$ if and only if $f=b$ and $e\in T^a\setminus 0$ in which case $$\#_Z(f,e)=\#_T(f,e) + 1.$$ The notation $\card{Z}{f}{e}$ and the corresponding notion of cardinality are given in \cref{def:treeinversions}.
\end{proposition}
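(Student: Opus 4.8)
The plan is to work with the inversion-set description $\inv{Z} = \tcp{\inv{T}}{(b,a)}$ from \cref{def:givesstreestrot} and to pin down the transitive closure explicitly, rather than reading cardinalities off the tree picture. Write $j = \card{T}{b}{a}$. Condition (i) of \cref{def:treeascent} gives $a \in T^b_j$ with $j < s(b)$, so the sum $\inv{T} + (b,a)$ does not cap and merely raises the multiplicity of $(b,a)$ from $j$ to $j+1$. By \cref{rmk:biggercontain}, every $e \in T^a$ satisfies $\card{T}{b}{e} = \card{T}{b}{a} = j$, and $\card{T}{c}{e} = \card{T}{c}{b}$ for every $c > b$. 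Finally, condition (iii) of \cref{def:treeascent} makes $T^a_{s(a)}$ free of labeled vertices, so the labeled vertices of $T^a\setminus 0$ are exactly $\set{a} \cup \sett{e}{\card{T}{a}{e} > 0}$; this last reformulation is what lets transitivity detect them.

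For the lower bound $A_T(a,b) \supseteq \sett{(b,e)}{e \in T^a\setminus 0}$, I would show the closure is forced to raise every such pair. For $e = a$ this is the sum itself, so $\card{Z}{b}{a} \geq j+1$. For $e \in T^a\setminus 0$ with $e \neq a$ we have $\card{Z}{a}{e} \geq \card{T}{a}{e} > 0$, so transitivity of $\inv{Z}$ applied to $e < a < b$ forces $\card{Z}{b}{e} \geq \card{Z}{b}{a} \geq j+1 > j = \card{T}{b}{e}$. Thus $(b,e) \in A_T(a,b)$ for every $e \in T^a\setminus 0$.

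For the upper bound and the exact multiplicities, I would introduce the candidate multi-inversion set $I^{*}$, equal to $\inv{T}$ except that $\card{I^{*}}{b}{e} = j+1$ for each $e \in T^a\setminus 0$, and prove that $I^{*}$ is already transitive. Since $I^{*}$ contains $\inv{T} + (b,a)$ and the transitive closure is the smallest transitive multi-inversion set containing it, transitivity of $I^{*}$ gives $\inv{Z} \subseteq I^{*}$; combined with the lower bound this yields $\inv{Z} = I^{*}$, from which the proposition, including $\#_Z(b,e) = \#_T(b,e) + 1$, is immediate. To verify transitivity of $I^{*}$ it suffices to inspect triples $x < y < z$ having one of the altered pairs $(b,e)$ among $(z,y)$, $(y,x)$, $(z,x)$. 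When $(b,e)$ is the middle pair $(y,x)$ (so $y = b$, $z > b$), the required inequality $\card{I^{*}}{z}{e} \geq \card{I^{*}}{z}{b}$ holds with equality by \cref{rmk:biggercontain}. When $(b,e)$ is the outer pair $(z,x)$ (so $z = b$ and $e < y < b$), applying the transitivity property of $\inv{T}$ from \cref{prop:invchartrees} to $e < y < b$ (whose bottom pair $(y,e)$ is nonzero) gives $\card{T}{b}{y} \leq \card{T}{b}{e} = j$, whence $\card{I^{*}}{b}{y} \leq j+1$, as needed.

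The main obstacle is the remaining role, in which $(b,e)$ is the upper pair $(z,y)$; here the triple is $x < e < b$ with $\card{T}{e}{x} > 0$, and one must produce $\card{I^{*}}{b}{x} \geq j+1$. The transitivity property of $\inv{T}$ only delivers $\card{T}{b}{x} \geq j$, one short of what is needed, so the extra unit must come from locating $x$. I would split on whether $x \in T^a$. If $x \in T^a$, then $\card{T}{e}{x} > 0$ rules out $x \in T^a_0$ while condition (iii) empties $T^a_{s(a)}$, so $x \in T^a\setminus 0$ and $\card{I^{*}}{b}{x} = j+1$. If $x \notin T^a$, then $\card{T}{e}{x} > 0$ places $x$ to the right of $a$; but by \cref{rmk:ascentsubtree} $a$ lies on the rightmost branch ${_RT^b_j}$, so nothing to the right of $a$ can remain in $T^b_j$, forcing $x \in T^b_{j'}$ with $j' > j$ and hence $\card{T}{b}{x} \geq j+1$. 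This geometric fact, that $a$ sits as far right as possible inside $T^b_j$, supplies the missing unit and completes the verification.
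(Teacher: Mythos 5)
Your proof is correct, but it takes a genuinely different route from the paper's. The paper reads the conclusion directly off the tree-surgery description of an {\str} in \cref{rmk:treepicture}: the only subtrees that move are those shown in \cref{fig:genstreerotab}, and the one substantive observation is that condition (iii) of \cref{def:treeascent} forces $T^a_{s(a)}$ to be a leaf, so the vertices carried along with $a$ are exactly those of $T^a\setminus 0$. You never invoke the tree picture and instead compute the transitive closure $\tcp{\inv{T}}{(b,a)}$ from scratch: transitivity forces the lower bound $A_T(a,b)\supseteq\sett{(b,e)}{e\in T^a\setminus 0}$, and exhibiting the explicitly transitive set $I^{*}$ gives the matching upper bound, so the sandwich pins down the exact multiplicities. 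Your route is longer but more self-contained --- it does not lean on \cref{rmk:treepicture}, which the paper states without proof --- and it independently certifies that the closure raises each affected pair by exactly one. The one place you genuinely need tree geometry is the third transitivity role, and your appeal to \cref{rmk:ascentsubtree} (that $a$ lies on the rightmost branch of $T^b_j$, so any $x$ to the right of $a$ lands in $T^b_{j'}$ with $j'>j$ or to the right of $b$, giving $\card{T}{b}{x}\geq j+1$) is exactly the right fact, correctly deployed. One cosmetic slip: the labeled vertices of $T^a\setminus 0$ are $\set{a}\cup\sett{e\in T^a}{\card{T}{a}{e}>0}$ rather than $\set{a}\cup\sett{e}{\card{T}{a}{e}>0}$, since without the restriction $e\in T^a$ the latter also captures vertices to the right of $a$ when $s(a)>0$; you only ever use the correct implication (membership in $T^a\setminus 0$ gives $\card{T}{a}{e}>0$), so nothing in the argument breaks.
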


\begin{proof}
First, we note that if $e\in T^a\setminus 0$ and $e<a$, then $s(a)>0$. Thus, $T^a_{s(a)}$ is a leaf by condition (iii) of \cref{def:treeascent} of $(a,b)$ being a tree ascent of $T$. Hence, for any $e\in T^a\setminus 0$, $e\not \in T^a_{s(a)}$. Then both parts of the statement follow from \cref{rmk:treepicture} by considering the only subtrees that change in an {\str} (see \cref{fig:genstreerotab}).
\end{proof}

A particularly simple case of \cref{lem:inversionsadded} is when the smaller element of a tree ascent has only a single child.

\begin{corollary}\label{cor:invsadded0}
If $(a,b)$ is a tree ascent of $T$ with $s(a)=0$ and $\rot{T}{Z}{(a,b)}$, then $A_T(a,b)=\set{(b,a)}$.
\end{corollary}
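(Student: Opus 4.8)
The plan is to derive this directly from \cref{lem:inversionsadded}, which already does all the real work. That proposition tells us that for $1 \leq e < f \leq n$, the pair $(f,e)$ lies in $A_T(a,b)$ if and only if $f = b$ and $e \in T^a \setminus 0$. So the only thing left to determine is exactly which labeled vertices lie in $T^a \setminus 0$ under the additional hypothesis $s(a) = 0$.

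First I would unwind the definition of the subtree $T^a \setminus 0$ in this special case. Since $s(a) = 0$, the internal vertex $a$ has exactly $s(a) + 1 = 1$ child, namely its $0$th child, so the only subtree hanging below $a$ is $T^a_0$. By the definition of $T^i \setminus j$ given at the start of \cref{sec:bkgdswo}, $T^a \setminus 0$ is obtained from $T^a$ by replacing $T^a_0$ with a leaf; after this replacement the single remaining labeled vertex is $a$ itself, since every other labeled vertex of $T^a$ was a descendant of $a$ lying in $T^a_0$. Hence the condition $e \in T^a \setminus 0$ forces $e = a$.

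Combining this observation with \cref{lem:inversionsadded}, the condition ``$f = b$ and $e \in T^a \setminus 0$'' collapses to ``$f = b$ and $e = a$,'' so the unique pair belonging to $A_T(a,b)$ is $(b,a)$, giving $A_T(a,b) = \set{(b,a)}$. There is essentially no obstacle to overcome here: the entire content is the elementary remark that a vertex with $s(a) = 0$ has a single child, so deleting that child's subtree removes every labeled proper descendant of $a$ and leaves $a$ as the only labeled vertex of $T^a \setminus 0$.
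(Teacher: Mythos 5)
Your argument is correct and is precisely the reasoning the paper leaves implicit when it states this as an immediate corollary of \cref{lem:inversionsadded}: since $s(a)=0$ means $a$ has only the child $T^a_0$, the subtree $T^a\setminus 0$ contains $a$ as its sole labeled vertex, so the characterization ``$f=b$ and $e\in T^a\setminus 0$'' collapses to the single pair $(b,a)$. Nothing is missing.
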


The subsequent lemma essentially shows that the sets of inversions added by {\str}s along distinct tree ascents are disjoint. This is illustrated by \cref{ex:exaddedinvs} where the particular sets of inversions added are pairwise disjoint. We use this lemma in the proof of one of two different upcoming characterizations of $Z\vee Q$ for any $T\precdot Z,Q$. The proof relies on the restrictions on tree ascents from \cref{cor:invsadded0} and on our characterization of tree inversions added by an {\str} from \cref{lem:inversionsadded}.

\begin{lemma}\label{lem:nocommoninvsadded}
Let $T$ be an {\sdt}. Let $1\leq a<b \leq n$ and $1\leq c<d \leq n$ be such that $(a,b)$ and $(c,d)$ are tree ascents of $T$ with $a<c$. Then $A_T(a,b)$, $A_T(c,d)$, and $F_T(a,c)$ are pairwise disjoint. 

The notation $A_T(a,b)$ and $F_T(a,c)$ are given in \cref{def:invsaddedset} and \cref{def:tertaddedinvs}, respectively.
\end{lemma}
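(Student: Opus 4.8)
The plan is to reduce all three disjointness claims to comparisons of the first (larger) coordinates of the pairs, and, where those coincide, to comparisons of vertex sets of subtrees of $T$. The explicit descriptions already available make this bookkeeping rather than computation. By \cref{lem:inversionsadded}, every element of $A_T(a,b)$ has the form $(b,e)$ with $e\in T^a\setminus 0$, and every element of $A_T(c,d)$ has the form $(d,e')$ with $e'\in T^c\setminus 0$. By \cref{def:tertaddedinvs}, $F_T(a,c)$ is empty unless $b=c$, in which case each of its elements has the form $(d,e)$ with $e\in T^a\setminus 0$. So the first coordinate is $b$ throughout $A_T(a,b)$, and $d$ throughout both $A_T(c,d)$ and $F_T(a,c)$.

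Two of the three checks are immediate from this. For $A_T(a,b)$ against $F_T(a,c)$: if $F_T(a,c)\neq\emptyset$ then $b=c$, and since $(c,d)$ is a tree ascent we have $c<d$, hence $b=c<d$; the first coordinates $b$ and $d$ differ, so the sets are disjoint. For $A_T(c,d)$ against $F_T(a,c)$: again assume $F_T(a,c)\neq\emptyset$, so $b=c$ and $a\in T^c_0$, which gives $T^a\subseteq T^c_0$ and therefore $T^a\setminus 0\subseteq T^c_0$. Both sets now have first coordinate $d$, so I compare second coordinates: those of $F_T(a,c)$ lie in $T^a\setminus 0\subseteq T^c_0$, while those of $A_T(c,d)$ lie in $T^c\setminus 0$, which consists of $c$ together with the labeled vertices of $T^c_1,\dots,T^c_{s(c)}$ and hence is disjoint from $T^c_0$. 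So these two sets are disjoint.

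The remaining and genuinely delicate check is $A_T(a,b)$ against $A_T(c,d)$, and I expect the case $b=d$ to be the main obstacle, since there the first coordinates agree and I must rule out a shared second coordinate, i.e.\ I must show the vertex sets $T^a\setminus 0$ and $T^c\setminus 0$ are disjoint. (When $b\neq d$ the first coordinates already differ and there is nothing to prove.) First I would note that $a<c$ forces $c\notin T^a$, since all labeled descendants of $a$ are smaller than $a$. Because any two subtrees of a rooted tree are either nested or vertex-disjoint, the only remaining possibility that could cause an overlap is $a\in T^c$. Here I would invoke the tree-ascent conditions of \cref{def:treeascent}: from $a<c<b=d$ and condition (ii) for the ascent $(a,b)$, membership $a\in T^c$ forces $a\in T^c_{s(c)}$; but then condition (iii) for the ascent $(c,d)=(c,b)$ forces $s(c)=0$ (otherwise $T^c_{s(c)}$ would contain the labeled vertex $a$ and fail to be a leaf). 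Thus $a\in T^c_0$, so by \cref{cor:invsadded0} we have $A_T(c,d)=\{(b,c)\}$, and since $c\notin T^a$ the pair $(b,c)$ does not lie in $A_T(a,b)$. In every case the two sets are disjoint, completing the three pairwise comparisons and hence the proof.
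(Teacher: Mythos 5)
Your proof is correct and follows essentially the same route as the paper's: both reduce the only nontrivial comparison to the case $b=d$, deduce $a\in T^c$, force $s(c)=0$ via the tree-ascent conditions, and then dispose of the single element $(d,c)$ of $A_T(c,d)$ using $c\notin T^a$, with the $F_T(a,c)$ comparisons handled identically. The only cosmetic differences are that you obtain $a\in T^c$ from the nested-or-disjoint property of rooted subtrees rather than from the rightmost-path description of ${_RT^b_i}$, and you inline the argument of \cref{lem:nolowerascents} instead of citing it.
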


\begin{proof}
We assume seeking contradiction that there is some $(f,e)\in A_T(a,b) \cap A_T(c,d)$. Then by \cref{lem:inversionsadded}, $f=b=d$ and $e\in T^a, T^c$. Now by \cref{def:treeascent} of $(a,b)$ and $(c,d)$ being tree ascents of $T$, $a,c\in T^b$. Then, by the fact that $e$ is only below one child of $b$ in $T$ and by \cref{rmk:biggercontain}, $a,c\in T^b_i$. Then since $(a,b)$ and $(c,b)$ are both tree ascents of $T$, $a,c\in {_RT^b_i}$ by \cref{rmk:ascentsubtree}. Now by definition of ${_RT^b_i}$, $a\in T^c$. If $s(c)>0$, then $(a,b)$ and $(c,b)$ both being tree ascents of $T$ contradicts \cref{lem:nolowerascents}. Thus, $s(c)=0$. Then by \cref{cor:invsadded0}, $A_T(c,d)=\{(d,c)\}$ so $(f,e)=(d,c)$. But that contradicts \cref{lem:inversionsadded} because $a<c$ so $c\not \in T^a\setminus 0$. 

If $F_T(a,c)\neq \emptyset$, then $b=c\neq d$ and $a\in T^c_0$ by \cref{def:tertaddedinvs}. Thus,  $F_T(a,c)$ is disjoint from $A_T(a,b)$  by \cref{lem:inversionsadded} since $b\neq d$. Also in this case, $F_T(a,c)$ is disjoint from $A_T(c,d)$ by \cref{lem:inversionsadded} because each $e\in T^a\setminus 0$ is also in $T^c_0$.
\end{proof}

Now we have the first of two different descriptions of $Z\vee Q$ for any $T\precdot Z,Q$. The second description is $Z\vee Q$ below. Intuitively, this lemma says we can reach $Z\vee Q$ by first performing the {\str} of $T$ along the tree ascent associated with $Z$ and then the {\str} of $Z$ along the tree ascent associated with $Q$ or vice versa. In reality, we run into situations where the tree ascent of $T$ associated with $Q$ is not actually a tree ascent of $Z$ or vice versa. So this intuitive picture is not always defined. We address those situations with later lemmas. We use this description to establish the desired saturated chains in $[T,Z\vee Q]$, while we use the second description in the proofs that there are no other saturated chains to such a join. We prove this lemma by showing double containment of multi-inversion sets using the definition of transitive closure and our characterization of the tree inversions added by an {\str} from \cref{lem:inversionsadded}.

\begin{lemma}\label{lem:joinoftwoats}
Let $T$ be an {\sdt} and let $1\leq a<b \leq n$ and $1\leq c<d \leq n$ be such that $(a,b)$ and $(c,d)$ are distinct tree ascents of $T$. Suppose $\rot{T}{Z}{(a,b)}$ and $\rot{T}{Q}{(c,d)}$. Then $\inv{Z\vee Q} = \tcp{{\tcp{\inv{T}}{(b,a)}}}{(d,c)}$. Moreover, the order of the pairs in this equality of multi-inversion sets can be reversed.

The notation $(\cdot)^{tc}$ and the corresponding notion of transitive closure are given just prior to \cref{thm:latticejoin}. The notion of containment of multi-inversion sets is given in \cref{def:multinvsets}. The notation $I+J$ and an associated idea of the sum of multi-inversion sets are given just after \cref{ex:diffplantr}.
\end{lemma}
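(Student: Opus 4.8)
The plan is to prove the first equality $\inv{Z\vee Q}=\tcp{\tcp{\inv{T}}{(b,a)}}{(d,c)}=(\inv{Z}+(d,c))^{tc}$ by double inclusion of multi-inversion sets, building on the join formula $\inv{Z\vee Q}=\tcu{\inv{Z}}{\inv{Q}}$ from \cref{thm:latticejoin} together with $\inv{Z}=\tcp{\inv{T}}{(b,a)}$ and $\inv{Q}=\tcp{\inv{T}}{(d,c)}$ from \cref{def:givesstreestrot}. Throughout I would use three routine facts: the operations $(\cdot)^{tc}$, $+$, and $\cup$ are monotone under inclusion; $(\cdot)^{tc}$ satisfies $I\subseteq I^{tc}$ and is idempotent; and, by \cref{lem:inversionsadded}, an {\str} only raises multiplicities, so in particular $\inv{T}\subseteq\inv{Z}$ and $\inv{T}\subseteq\inv{Q}$.

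For the inclusion $\inv{Z\vee Q}\subseteq(\inv{Z}+(d,c))^{tc}$, I would show $\inv{Z}\cup\inv{Q}\subseteq(\inv{Z}+(d,c))^{tc}$ and then apply $(\cdot)^{tc}$ to both sides. The containment $\inv{Z}\subseteq(\inv{Z}+(d,c))^{tc}$ is immediate, while $\inv{Q}=(\inv{T}+(d,c))^{tc}\subseteq(\inv{Z}+(d,c))^{tc}$ follows from $\inv{T}\subseteq\inv{Z}$ by monotonicity of $+$ and of $(\cdot)^{tc}$.

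The reverse inclusion is the crux. Since $(\cdot)^{tc}$ is monotone and $\inv{Z\vee Q}$ is already transitively closed, it suffices to show $\inv{Z}+(d,c)\subseteq\inv{Z\vee Q}$. This set agrees with $\inv{Z}\subseteq\inv{Z\vee Q}$ at every pair except possibly $(d,c)$, so only that entry needs checking. Here I would observe that the rotation along $(a,b)$ leaves $(d,c)$ unchanged: by \cref{lem:inversionsadded} it can only alter an entry $(b,e)$ with $e\in T^a\setminus 0$, but every labeled vertex of $T^a$ has label at most $a<c$, so $c\notin T^a$ and $\card{Z}{d}{c}=\card{T}{d}{c}$. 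Since $(c,d)$ is a tree ascent, $\card{T}{d}{c}<s(d)$, whence $\card{\inv{Z}+(d,c)}{d}{c}=\card{T}{d}{c}+1=\card{Q}{d}{c}\leq\card{\inv{Z}\cup\inv{Q}}{d}{c}\leq\card{\inv{Z\vee Q}}{d}{c}$, again using \cref{lem:inversionsadded}. This closes the first equality.

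For the reversed-order equality $\inv{Z\vee Q}=(\inv{Q}+(b,a))^{tc}$, the same two-sided argument applies with the two ascents interchanged, and I expect the single genuine obstacle to arise here: the hypothesis $a<c$ breaks the symmetry, so when adding $(b,a)$ to $\inv{Q}$ I must rule out that the rotation along $(c,d)$ has already raised the $(b,a)$ entry. Such a change would require $d=b$ and $a\in T^c\setminus 0$ by \cref{lem:inversionsadded}, making $(a,b)$ and $(c,b)$ both tree ascents with $a\in T^c$ and $a<c$; then \cref{lem:nolowerascents} forces $s(c)=0$, and \cref{cor:invsadded0} gives $A_T(c,d)=\{(b,c)\}$, which does not touch $(b,a)$ since $a\neq c$. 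Hence $\card{Q}{b}{a}=\card{T}{b}{a}$ after all, and the computation finishes exactly as above using $\card{Z}{b}{a}=\card{T}{b}{a}+1$ and $\card{T}{b}{a}<s(b)$ from $(a,b)$ being a tree ascent.
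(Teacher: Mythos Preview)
Your argument is correct and follows essentially the same double-inclusion strategy as the paper, using the join formula, \cref{def:givesstreestrot}, and \cref{lem:inversionsadded} in the same way. The only cosmetic difference is that where you inline the verification that the rotation along $(c,d)$ does not touch the $(b,a)$ entry (via \cref{lem:nolowerascents} and \cref{cor:invsadded0}), the paper has already packaged exactly that argument as \cref{lem:nocommoninvsadded} and simply cites it.
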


\begin{proof}
First, by \cref{thm:latticejoin}, $\inv{Z\vee Q} = (\inv{Z}\cup \inv{Q})^{tc}.$ Let $I=\inv{Z}\cup \inv{Q}$. By definition of transitive closure, to show $$\tcp{{\tcp{\inv{T}}{(b,a)}}}{(d,c)} = I^{tc}$$ it suffices to show that $\tcp{\inv{T}}{(b,a)}+(d,c) \subseteq I$ and $\inv{Z},\inv{Q}\subseteq \tcp{\inv{T}}{(b,a)}+(d,c)$. We will show the inclusions in that order.

We recall by \cref{def:givesstreestrot} that $\inv{Z}= \tcp{\inv{T}}{(b,a)}$ and $\inv{Q}= \tcp{\inv{T}}{(c,d)}$. By \cref{lem:inversionsadded} and \cref{lem:nocommoninvsadded}, $\card{Z}{d}{c}=\card{T}{d}{c}$ and $\card{Q}{d}{c}=\card{T}{d}{c}+1$ so $\card{I}{d}{c}= \card{T}{d}{c}+1$. Thus, $\tcp{\inv{T}}{(b,a)}+(d,c) \subset I$. On the other hand, $\inv{T}+(d,c)\subset {\tcp{\inv{T}}{(b,a)}}+(d,c)$ since $\inv{T}\subset \tcp{\inv{T}}{(b,a)}$. Thus, $\inv{Q}, \newline \inv{Z} \subset ((\inv{T}+ (b,a))^{tc}+(d,c))^{tc}$. Therefore, $\inv{Z\vee Q} = ((\inv{T}+ \newline  (b,a))^{tc}+(d,c))^{tc}$. Similarly, the tree ascents may appear in the other order, that is $\inv{Z\vee Q} = \tcp{{\tcp{\inv{T}}{(d,c)}}}{(b,a)}$.
\end{proof}

In the next lemma, we begin with distinct tree ascents $(a,b)$ and $(c,d)$ of an {\sdt} $T$ and let $Z$ and $Q$ be the {\str}s of $T$ along those tree ascents, respectively. We characterize when one pair ceases to be a tree ascent of the {\str} along the other pair. Intuitively, $(c,d)$ only stops being a tree ascent of $Z$ if the subtree of $T$ rooted at $a$ is moved to the right most child of $c$ by the rotation. Similarly, $(a,b)$ only stops being a tree ascent of $Q$ if the subtree of $T$ rooted at $a$ is left behind while $b$ is moved by the {\str}. The four possibilities turn out to correspond to different relationships between $(a,b)$ and $(c,d)$ in $T$. These four possibilities end up characterizing the intervals $[T,Z\vee Q]$ which have Hasse diagrams that are diamonds, pentagons, and hexagons. In later lemmas, we will show that in particular, when $(a,b)$ is a tree ascent of $Q$ and $(c,d)$ is a tree ascent of $Z$, $[T,Z\vee Q]$ has Hasse diagram that is a diamond. When exactly one of $(a,b)$ is not a tree ascent of $Q$ or $(c,d)$ is not a tree ascent of $Z$, $[T, Z\vee Q]$ has Hasse diagram which is a pentagon. When both $(a,b)$ is not a tree ascent of $Q$ and $(c,d)$ is not a tree ascent of $Z$, $[T,Z\vee Q]$ has Hasse diagram that is a hexagon. \cref{lem:stopbeingascent} below can be illustrated with the {\sdt} in \cref{fig:sdtexample}. Using \cref{rmk:treepicture}, we can perform the {\str}s of the {\sdt} in \cref{fig:sdtexample} along the following pairs of tree ascents which exemplify the ways a pair of tree ascents can be related and all of the ways one tree ascent can cease to be a tree ascent after the {\str} along the another tree ascent: $(5,9)$ and $(7,8)$, $(2,4)$ and $(3,4)$, $(3,4)$ and $(4,5)$, $(2,4)$ and $(4,5)$, $(4,5)$ and $(5,9)$.

\begin{lemma}\label{lem:stopbeingascent}
Let $T$ be a {\sdt}. Let $1\leq a<b \leq n$ and $1\leq c<d \leq n$ be such that $(a,b)$ and $(c,d)$ are tree ascents of $T$ with $a<c$. Let $\rot{T}{Z}{(a,b)}$ and $\rot{T}{Q}{(c,d)}$. If either of $(a,b)$ is not a tree ascent of $Q$ or $(c,d)$ is not a tree ascent of $Z$, then $b=c$ and $s(c)>0$. Moreover, if $(a,c)$ is not a tree ascent of $Q$, then $a\in T^c_0$. If $(c,d)$ is not a tree ascent of $Z$, then $a\in T^c_{s(c)-1}$.
\end{lemma}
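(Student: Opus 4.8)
The plan is to prove the two implications separately: (A) if $(c,d)$ is not a tree ascent of $Z$, then $b=c$ and $a\in T^c_{s(c)-1}$, and (B) if $(a,b)$ is not a tree ascent of $Q$, then $b=c$ and $a\in T^c_0$. Together these give the stated dichotomy, and the assertion $s(c)>0$ will come for free in each case: once $b=c$ is known, $(a,b)=(a,c)$ is a tree ascent of $T$, so \cref{rmk:noascents0tops} forces $s(c)=s(b)>0$. Thus the real work is to pin down $b=c$ together with the stated position of $a$, and the strategy throughout is to track exactly which conditions of \cref{def:treeascent} a rotation can destroy, using \cref{lem:inversionsadded} to control cardinalities and \cref{rmk:treepicture} to control the tree structure.

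For (A) I would first show that conditions (i) and (ii) of \cref{def:treeascent} for $(c,d)$ cannot be destroyed by $\rot{T}{Z}{(a,b)}$. By \cref{lem:inversionsadded} the only cardinalities that change are $\card{Z}{b}{e}=\card{T}{b}{e}+1$ for $e\in T^a\setminus 0$; since $a<c$ we have $c\notin T^a$, so every cardinality with lower entry $c$ is unchanged, and these are exactly the cardinalities governing (i) and (ii) (equivalently, via \cref{rmk:ascentsubtree}, the membership $c\in {_RT^d_k}$). A structural check, using that the rotation is an internal rearrangement of $T^b$ that relocates only $a$, the subtree $T^a_0$, and the subtree $T^m_0$, shows the ancestors of $c$ and their subtree indices are likewise undisturbed, so (i) and (ii) survive. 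Hence $(c,d)$ can only fail through condition (iii), i.e. $Z^c_{s(c)}$ is no longer a leaf. Consulting \cref{rmk:treepicture}, the only relocated vertex that can come to rest in a rightmost subtree is $a$ itself, and this happens precisely through the clause $Z^b_{j+1}=Z^a$ that applies when $m=b$; for this subtree to be the rightmost subtree of $c$ one needs $m=b=c$ and $j+1=s(c)$. This yields $b=c$ and $a\in {_RT^c_{j}}$ with $j=s(c)-1$, hence $a\in T^c_{s(c)-1}$. That vertices of $T^a_0$ or $T^m_0$ cannot enter $Z^c_{s(c)}$ uses that $T^c_{s(c)}$ is a leaf, so no internal vertex of $T$ sits in $c$'s rightmost subtree to receive them.

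For (B) I would run the symmetric analysis for $\rot{T}{Q}{(c,d)}$. Condition (iii) for $(a,b)$ is preserved because $T^a_{s(a)}$ is a leaf and the relocated pieces $c$, $T^c_0$, $T^{m'}_0$ all come to rest at or below a vertex that is $\ge c>a$, never inside $a$'s rightmost subtree. The cardinality parts of (i) and (ii) are preserved as well: the only changed cardinalities are $\card{Q}{d}{e}=\card{T}{d}{e}+1$ for $e\in T^c\setminus 0$, and if $a\in T^c$ then condition (ii) of $(a,b)$ applied at $e=c$ forces $a\in T^c_{s(c)}$, which, since $(c,d)$ is a tree ascent, can only occur when $s(c)=0$ and $a\in T^c_0$; then $a\notin T^c\setminus 0$ and no relevant cardinality moves (here I use \cref{rmk:biggercontain} to compare the subtree indices of $a$ and $c$). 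Thus the only way $(a,b)$ can break is that $a$ is separated from $b$, i.e. (i) fails. By \cref{rmk:treepicture} the pivot vertex that is moved while abandoning its $0$-th subtree is $c$, so $a$ can be left behind only if $b=c$ and $a$ lies in the discarded subtree $T^c_0$, giving $b=c$ and $a\in T^c_0$.

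The main obstacle I expect is the bookkeeping in conditions (i) and (ii): beyond the clean cardinality invariance coming from \cref{lem:inversionsadded}, one must verify that the rotation does not disturb the ancestor/descendant structure witnessing these conditions. This reduces to a case analysis of where $c$ (respectively $a$) sits relative to the three relocated pieces $a$, $T^a_0$, $T^m_0$ (respectively $c$, $T^c_0$, $T^{m'}_0$) of \cref{rmk:treepicture}, repeatedly invoking \cref{rmk:biggercontain} and the leaf condition (iii) of the given tree ascents to eliminate the dangerous configurations. Correctly tracking the destination of the moved vertex through the $m=b$ versus $m\neq b$ dichotomy of the rotation is the delicate point on which the whole argument turns.
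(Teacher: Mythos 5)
Your proposal is correct and follows essentially the same route as the paper's proof: a case analysis of which condition of \cref{def:treeascent} a rotation can destroy, with \cref{lem:inversionsadded} controlling cardinalities and \cref{rmk:treepicture} controlling the relocated subtrees, arriving at the same two surviving failure modes ($a\in T^c_0$ when $(a,b)$ dies in $Q$, $a\in T^c_{s(c)-1}$ when $(c,d)$ dies in $Z$). The only organizational difference is that you rule out the paper's impossible cases (its cases (1) and (2)) via the observation that $a\in T^c$ forces $s(c)=0$ and $a\in T^c_0$, where the paper instead invokes \cref{lem:nocommoninvsadded} for one of them, and you correctly flag the remaining bookkeeping (e.g.\ the sub-case where $a$ newly acquires the ancestor $c$ at index $s(c)$) as the detail to be checked.
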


\begin{proof}
We will argue that there are four cases that we must check in more detail for the way in which one of the tree ascents $(a,b)$ or $(c,d)$ can cease to be a tree ascent after the {\str} along the other. We will check these four cases and show that two of them cannot actually occur and that the other two are precisely the conclusions of the lemma. Suppose that either $(a,b)$ is not a tree ascent of $Q$ or $(c,d)$ is not a tree ascent of $Z$. Then after the {\str} along one of $(a,b)$ or $(c,d)$, at least one of the three conditions of \cref{def:treeascent} must be violated by the other pair. 

We begin with two observations with which we show three simpler cases cannot occur leaving us with the four cases mentioned above. First, since $a<c$, \cref{rmk:treepicture} implies the {\str} along $(a,b)$ does not move vertex $c$ or any vertices above $c$ in $T$. Second, {\str}s never decrease the cardinalities of tree inversions by \cref{lem:inversionsadded}.

The first observation shows that condition (i) of \cref{def:treeascent} cannot be violated by $(c,d)$ in $Z$ because the relative positions of $c$ and $d$ in $T$ are not changed by the {\str} along $(a,b)$. The first and second observations together show that condition (ii) of \cref{def:treeascent} cannot be violated by $(c,d)$ in $Z$. This is because the first observation implies that for any $e$ with $c<e<d$, $c\in T^e$ if and only if $c\in Z^e$. By condition (ii) of \cref{def:treeascent} of $(c,d)$ being a tree ascent of $T$, $\card{T}{e}{c}=s(e)$. Then by the second observation, $\card{T}{e}{c}\leq \card{Z}{e}{c}$ so $\card{Z}{e}{c}=s(e)$, which is exactly condition (ii) of \cref{def:treeascent} of $(c,d)$ being a tree ascent of $Z$. Lastly, the second observation shows that condition (ii) of \cref{def:treeascent} cannot be violated by $(a,b)$ in $Q$ in certain cases, namely by any $e$ such that $a<e<b$, $a\in Q^e$, and $a\in T^e$. This is again because condition (ii) of \cref{def:treeascent} of $(a,b)$ being a tree ascent of $T$ implies $\card{T}{e}{a}=s(e)$ and the second observation implies $\card{T}{e}{a}\leq \card{Q}{e}{a}$ so $\card{Q}{e}{a}=s(e)$. The case of $a<e<b$ with $a\in Q^e$, but $a\not\in T^e$ is covered as case (1) below.

Thus, there are four possible cases for how conditions (i), (ii), or (iii) of \cref{def:treeascent} might be violated.

\begin{enumerate}
    \item [(1)] $(a,b)$ is not a tree ascent of $Q$ because (ii) is violated by some $a<e<b$ such that $a\not \in T^e$, but $a\in Q^e_i$ and $i<s(e)$.
    \item [(2)] $(a,b)$ is not a tree ascent of $Q$ because (i) is violated by $\card{Q}{b}{a}=s(b)$,
    \item [(3)] $(a,b)$ is not a tree ascent of $Q$ because (i) is violated by $a\not\in Q^b$,
    \item [(4)] $(c,d)$ is not a tree ascent of $Z$ because (iii) is violated by  $s(c)>0$ and $Z^c_{s(c)}$ is not a leaf.
\end{enumerate}

 We show cases (1) and (2) cannot occur and that cases (3) and (4) give the conclusions of \cref{lem:stopbeingascent}. 

\begin{enumerate}
    \item [(1)] Assume there is some $e$ such that $a<e<b$, $a\not\in T^e$, and $a\in Q^e_i$ with $i<s(e)$. By \cref{rmk:treepicture}, there are two ways that $a$ is below vertex in $Q$ which it was not below in $T$. Either $a\in Q^c_{s(c)}$ or $a\in T^c\setminus 0$. If $a\in Q^c_{s(c)}$, then the only vertex that $a$ is below in $Q$ which it was not below in $T$ is $c$. Thus, $e=c$, but $\card{Q}{c}{a}=s(c)$ so (ii) would not be violated. If $a\in T^c\setminus 0$, then $a\neq c$ implies $s(c)>0$. However, if $c<b$, then $a\in T^c_{s(c)}$ because $(a,b)$ is a tree ascent of $T$. This contradicts $(c,d)$ being a tree ascent of $T$ because $s(c)>0$ and $T^c_{s(c)}$ is not a leaf. If $b\leq c$, then $b\in T^c\setminus 0$ by \cref{rmk:biggercontain}. Then by \cref{rmk:treepicture}, if $e$ has $a\in Q^e$ and $a\not\in T^e$, then $c\in Q^e$ also. Thus, $e\geq c\geq b$ contradicting $e<b$. Thus, this case cannot occur.
    
    \item [(2)] Assume $\card{Q}{b}{a} = s(b)$. Since $(a,b)$ is a tree ascent of $T$, $\card{T}{b}{a}<s(b)$. Thus, $\card{Q}{b}{a}=s(b)$ implies $(b,a)\in A_T(c,d)$ by \cref{lem:inversionsadded}. However, this contradicts \cref{lem:nocommoninvsadded}. Hence, this case cannot occur.
    
    \item [(3)] Suppose $a\not \in Q^b$. We note that \cref{rmk:treepicture} (\cref{fig:genstreerotab}) implies that $a\in T^b$ and $a\not \in Q^b$ if and only if $b=c$ and $a\in T^c_0$ by considering the subtrees which change with the {\str}. Thus, $b=c$ and $a\in T^c_0$. Then since $b=c$ and $(a,b)$ is a tree ascent of $T$, $s(c)>0$ by \cref{rmk:noascents0tops}. This is precisely the first of the two possible conclusions of \cref{lem:stopbeingascent}.
    
    \item [(4)] Suppose $s(c)>0$ and $Z^c_{s(c)}$ is not a leaf. We note that $T^c_{s(c)}$ is a leaf by (iii) of \cref{def:treeascent} of $(c,d)$ being a tree ascent of $T$ since $s(c)>0$. Now \cref{rmk:treepicture} implies that $T^c_{s(c)}$ is a leaf and $Z^c_{s(c)}$ is not a leaf if and only if $c=b$ and $a\in T^c_{s(c)-1}$ again by considering the subtrees which change with the {\str}. Hence, $b=c$ and $a\in T^c_{s(c)-1}$. This is exactly the second possible conclusion of \cref{lem:stopbeingascent}.
\end{enumerate}
\vspace{-5mm}
\end{proof}

\begin{remark}
Assuming the hypotheses of \cref{lem:stopbeingascent}, if $s(c)=0$, condition (iii) of \cref{def:treeascent} cannot be violated by $(c,d)$ in $Z$. In this case, $(c,d)$ will be a tree ascent of $Z$.
\end{remark}

In the following lemma, we give a second description of $Z\vee Q$ for any $T\precdot Z,Q$. We explicitly find the multi-inversion set difference between $\inv{T}$ and $\inv{Z\vee Q}$, in contrast with \cref{lem:joinoftwoats} which was the first description of $Z\vee Q$. Similarly to \cref{lem:joinoftwoats} though, $\inv{T}$ and $\inv{Z\vee Q}$ is obtained from $\inv{T}$ by adding the tree inversions necessary to reach $Z$ from $T$ and then the tree inversions needed to reach $Q$ from $T$ but with a correction of some additional tree inversions if $(a,b)$ is not a tree ascent of $Z$. In practice, this lemma shows the possible pairs that may occur as tree ascents corresponding to cover relations in the interval $[T,Z\vee Q]$. We use this lemma to restrict the chains that can occur in $[T,Z\vee Q]$. To show this lemma, we consider two cases based on relationships between tree ascents from \cref{lem:stopbeingascent}. In the proof of the trickier of the two, we construct one of the saturated chains that can occur in $[T,Z\vee Q]$ corresponding to the case from \cref{lem:stopbeingascent} where $T$ has tree ascents $(a,b)$ and $(c,d)$ with $\rot{T}{Z}{(a,b)}$, $\rot{T}{Q}{(c,d)}$, and $(a,b)$ is not a tree ascent of $Q$. The construction of the chain is illustrated in \cref{fig:sc0pentalongforproof} below. We can also verify the lemma on the {\sdt} in \cref{fig:sdtexample} in the case of the cover relations given by the tree ascents and tree inversions added in \cref{ex:exaddedinvs}.

\begin{lemma}\label{lem:addedinvsetcharact}
Let $T$ be an {\sdt}. Let $1\leq a<b \leq n$ and $1\leq c<d \leq n$ be such that $(a,b)$ and $(c,d)$ are tree ascents of $T$ with $a<c$. Suppose $\rot{T}{Z}{(a,b)}$ and $\rot{T}{Q}{(c,d)}$, then $\inv{Z\vee Q} - \inv{T} = A_T(a,b)\cup A_T(c,d) \cup F_T(a,c)$.

The notation $\inv{\cdot}-\inv{\cdot}$ and the corresponding notion of multi-inversion set difference are defined in \cref{def:multinvsets}. The notations $A_T(\cdot,\cdot)$ and $F_T(\cdot,\cdot)$ are defined in \cref{def:invsaddedset} and \cref{def:tertaddedinvs}. 
\end{lemma}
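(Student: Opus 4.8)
The plan is to compute $\inv{Z\vee Q}-\inv{T}$ directly from the two descriptions of $\inv{Z\vee Q}$ furnished by \cref{lem:joinoftwoats}, namely $\inv{Z\vee Q}=\tcp{\inv{Z}}{(d,c)}$ and the symmetric $\inv{Z\vee Q}=\tcp{\inv{Q}}{(b,a)}$, choosing whichever is cleaner in each case. The governing dichotomy I would record first is this: by \cref{lem:stopbeingascent} together with \cref{rmk:treepicture}, the pair $(a,b)$ fails to be a tree ascent of $Q$ precisely when $b=c$ and $a\in T^c_0$, which is exactly the condition under which $F_T(a,c)\neq\emptyset$. So I split into the case $F_T(a,c)=\emptyset$ (equivalently, $(a,b)$ is still a tree ascent of $Q$) and the case $F_T(a,c)\neq\emptyset$. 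Throughout, \cref{lem:inversionsadded} guarantees every rotation raises the relevant multiplicities by exactly $1$, and \cref{lem:nocommoninvsadded} gives that $A_T(a,b)$, $A_T(c,d)$, and $F_T(a,c)$ are pairwise disjoint, so all multiset bookkeeping reduces to tracking which pairs increase.

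In the case $F_T(a,c)=\emptyset$ I would use the symmetric description $\inv{Z\vee Q}=\tcp{\inv{Q}}{(b,a)}$. Since $(a,b)$ is a tree ascent of $Q$, this is $\inv{Q'}$ for $\rot{Q}{Q'}{(a,b)}$, so \cref{lem:inversionsadded} gives $\inv{Z\vee Q}-\inv{Q}=A_Q(a,b)$. Because $a<c$, the rotation $\rot{T}{Q}{(c,d)}$ only adds inversions with top element $d$ and relocates the subtree rooted at $a$ without altering its internal structure (\cref{rmk:treepicture}); hence $Q^a\setminus 0=T^a\setminus 0$ and therefore $A_Q(a,b)=A_T(a,b)$ by \cref{lem:inversionsadded}. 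Combining this with $\inv{Q}-\inv{T}=A_T(c,d)$ and the disjointness of $A_T(a,b)$ and $A_T(c,d)$ yields $\inv{Z\vee Q}-\inv{T}=A_T(a,b)\cup A_T(c,d)$, which is the claim since $F_T(a,c)=\emptyset$. I note that in this case $(c,d)$ need not be a tree ascent of $Z$, but this is harmless precisely because I rotate $Q$ rather than $Z$.

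In the case $F_T(a,c)\neq\emptyset$ we have $b=c$, $a\in T^c_0$, and $s(c)>0$, and I would use $\inv{Z\vee Q}=\tcp{\inv{Z}}{(d,c)}$. Here $\inv{Z}-\inv{T}=A_T(a,b)=\sett{(c,e)}{e\in T^a\setminus 0}$, with each such pair satisfying $\card{Z}{c}{e}\geq 1$, while $\card{Z}{d}{c}=\card{T}{d}{c}$. Adjoining $(d,c)$ raises $\card{}{d}{c}$ by one, and transitivity (\cref{prop:invchartrees}) then forces, for each $e<c$ with $\card{Z}{c}{e}>0$, an increase of $(d,e)$ up to $\card{T}{d}{c}+1$. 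I would show this forced set is exactly $A_T(c,d)\cup F_T(a,c)$: the adjoined pair $(d,c)$ together with the forced pairs for $e\in T^c_i$ with $i\geq 1$ (where $\card{T}{d}{e}=\card{T}{d}{c}$ by \cref{rmk:biggercontain}) reproduce $A_T(c,d)$; the forced pairs for $e\in T^a\setminus 0\subseteq T^c_0$ produce $F_T(a,c)$; and, crucially, any $e$ lying to the right of $c$ requires no increase because planarity of $T$ already forces $\card{T}{d}{e}\geq\card{T}{d}{c}+1$ there. The hexagon subcase $s(c)=1$, in which $(c,d)$ is simultaneously not a tree ascent of $Z$, is where $\tcp{\inv{Z}}{(d,c)}$ no longer equals a single rotation; for it I would instead exhibit the explicit saturated chain of \cref{fig:sc0pentalongforproof} from $T$ to $Z\vee Q$ and read the added inversions off the rotations directly.

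The main obstacle is this second case, specifically verifying that $\tcp{\inv{Z}}{(d,c)}$ introduces \emph{exactly} the pairs of $A_T(c,d)\cup F_T(a,c)$ and no others. Containment of these pairs is routine, but ruling out spurious forced inversions — showing that the transitive cascade triggered by $(d,c)$ never escapes the subtrees $T^a$ and $T^c$ and never disturbs labels to the right of $c$ — requires careful use of the planarity and transitivity conditions of \cref{prop:invchartrees} alongside \cref{rmk:biggercontain} and \cref{rmk:ascentsubtree}. The explicit chain construction is the cleanest way to organize this bookkeeping and to treat the hexagon subcase uniformly.
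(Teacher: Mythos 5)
Your proposal is correct in substance and, for the most part, mirrors the paper's own argument: the case split on whether $(a,b)$ survives as a tree ascent of $Q$ (equivalently, whether $F_T(a,c)=\emptyset$) is exactly the paper's; your first case coincides with the paper's first case; and your fallback for the hexagonal subcase --- the explicit saturated chain $\rot{T}{Q}{(c,d)}\rot{}{P}{(a,d)}\rot{}{P'}{(a,c)}$ of \cref{fig:sc0pentalongforproof} --- is precisely the chain the paper uses for the \emph{entire} second case. Where you genuinely diverge is the pentagonal subcase of the second case: you propose to compute $\tcp{\inv{Z}}{(d,c)}$ by hand, tracking the transitive cascade triggered by adjoining $(d,c)$ to $\inv{Z}$. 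This can be made to work, but it is the hard way around. In that subcase $(c,d)$ remains a tree ascent of $Z$ (by \cref{lem:stopbeingascent}, since $a\in T^c_0\neq T^c_{s(c)-1}$ when $s(c)\geq 2$), so $\tcp{\inv{Z}}{(d,c)}$ is the inversion set of a single {\str} of $Z$, and \cref{lem:inversionsadded} applied to $Z$ immediately gives the added set as $\sett{(d,e)}{e\in Z^c\setminus 0}$; one then only needs \cref{rmk:treepicture} to see that the labeled vertices of $Z^c\setminus 0$ are those of $T^c\setminus 0$ together with those of $T^a\setminus 0$, i.e., the added set is $A_T(c,d)\cup F_T(a,c)$. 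The paper sidesteps the subcase split entirely by running the one chain through $Q$ and invoking minimality of the join; your cascade analysis buys nothing over either route and costs exactly the delicate ``no spurious forced inversions'' verification you identify as the main obstacle.

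Two smaller points need attention in a full write-up. First, your justification that any $e$ to the right of $c$ needs no increase because ``planarity of $T$ already forces $\card{T}{d}{e}\geq\card{T}{d}{c}+1$'' is not quite right: planarity alone does not yield this. What makes it true is the tree-ascent structure --- $c\in{_RT^d_j}$ and $T^c_{s(c)}$ a leaf force every $e<c$ in $T^d_j\setminus T^c$ to lie to the \emph{left} of $c$ (so $\card{T}{c}{e}=0$), leaving only $e\in T^d_{j'}$ with $j'>j$ or $e$ right of $d$, for which $\card{T}{d}{e}\geq j+1$ holds automatically. Second, when you ``read the added inversions off the rotations'' along the explicit chain, you still owe an argument that the chain's endpoint is $Z\vee Q$ and not merely some upper bound of $\set{Z,Q}$; the paper supplies this by first proving $A_T(a,b)\cup A_T(c,d)\cup F_T(a,c)\subseteq \inv{Z\vee Q}-\inv{T}$ via transitivity and then appealing to the join being the least upper bound. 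Neither point is a fatal gap, but both must be filled.
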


\begin{proof}
There are two cases. Either $(a,b)$ is a tree ascent of $Q$ or not.

Suppose $(a,b)$ is a tree ascent of $Q$. Then either $b\neq c$ or $a\notin T^c_0$ by \cref{lem:stopbeingascent}. Either way, $F_T(a,c)=\emptyset$ by definition. Then by \cref{lem:joinoftwoats}, $\rot{Q}{Z\vee Q}{(a,b)}$. Thus, by \cref{lem:inversionsadded}, $\inv{Z\vee Q} - \inv{T} = A_T(c,d) \cup A_Q(a,b)$. Again using \cref{lem:inversionsadded}, $A_Q(a,b) = \sett{(b,e)}{e\in Q^a\setminus 0}$. Now since $a<c$, $c\not\in T^a$ and $c\not\in Q^a$. Thus, \cref{rmk:treepicture} implies $Q^a\setminus 0 = T^a\setminus 0$. Hence, $A_Q(a,b) = A_T(a,b)$ so $\inv{Z\vee Q} - \inv{T} = A_T(c,d) \cup A_T(a,b)$.

Next suppose $(a,b)$ is not a tree ascent of $Q$. Then $b=c$, $a\in T^c_0$, and $s(c)>0$ by \cref{lem:stopbeingascent}. We first argue that the multi-inversion set difference between $\inv{Z\vee Q}$ and $\inv{T}$ contains the stated tree inversions. We then produce an {\sdt} $P'$ whose multi-inversion set difference with $\inv{T}$ actually equals the stated tree inversions. Then the lemma holds because the join is the least upper bound, in this context has the smallest multi-inversion set difference with $\inv{T}$ by inclusion. We produce $P'$, which is $Z\vee Q$, in the argument by finding a particular saturated chain starting at $T$.

We first observe that by \cref{lem:inversionsadded} and \cref{lem:nocommoninvsadded}, $A_T(a,b)\cup A_T(c,d) \subseteq \inv{Z\vee Q} - \inv{T}$. Next we show that by transitivity $F_T(a,c) \subset \inv{Z\vee Q} - \inv{T}$. It suffices to show that $\card{Z\vee Q}{d}{e} \geq \card{T}{d}{e}+1$ for all $e\in T^a\setminus 0$. To show this inequality we first note that since $b=c$, $e\in Z^c_1$ for all $e\in T^a\setminus 0$ by \cref{rmk:treepicture}. Thus, $\card{Z\vee Q}{c}{e} \geq 1$ for all $e\in T^a\setminus 0$. Now for any such $e\in T^a\setminus 0$, $e<c<d$ so by transitivity $\card{Z\vee Q}{d}{e} \geq \card{Z\vee Q}{d}{c}$. Next we observe that by \cref{lem:inversionsadded} and the fact that $Q\preceq Z\vee Q$, $\card{Z\vee Q}{d}{c}\geq \card{T}{d}{c}+1$. Lastly, we note that since $a\in T^c$, $\card{T}{d}{e}=\card{T}{d}{c}$ for all $e\in T^a\setminus 0$. Thus, $\card{Z\vee Q}{d}{e} \geq \card{T}{d}{e}+1$.

\begin{figure}[H]
\centering
\subfigure{
\scalebox{.6}{
\begin{tikzpicture}[very thick]
  \node [style={draw,circle}]{d}
    child { node[style={draw,circle}] {e} 
        child { node[xshift=-10]    {$T^e_{0,\dots,s(e)-1}$} }
        child[grow=down]{ {} 
            child[grow=down,dashed]{ node[solid,style={draw,circle}] {g}
                child[solid]{ node[xshift=-15] {$T^g_{0,\dots,s(g)-1}$} }
                child[solid,grow=down] { node[style={draw,circle}] {c} 
                    child { node[style={draw,circle},xshift=-10] {k}
                        child { node[xshift=-10] {$T^b_{0,\dots,s(b)-1}$} }
                        child[grow=down] { {} 
                            child[dashed]{ node[solid,style={draw,circle}] {h}
                                child[solid]{ node[xshift=-20] {$T^h_{0,\dots,s(h)-1}$} }
                                child[solid,grow=down] { node [style={draw,circle}] {a} 
                                    child { node {$T^a_0$} }
                                    child[grow=down]{ node {$T^a_{1,\dots,s(a)-1}$} }
                                    child { node[style={draw,circle},xshift=-10] {} } 
                                }
                            }
                        }    
                    }
                child[solid,xshift=-30]{ {}
                    child[grow=down,dashed] { node[solid,style={draw,circle}] {$m_1$}
                        child[grow=down,solid] { node {$T^{m_1}_0$} } 
                        child[solid] { node[xshift=20] {$T^{m_1}_{1,\dots, s(m_1)}$} }  
                        }
                    }    
                child { node[xshift=-30] {$T^c_{2,\dots,s(c)-2}$ } }
                child { node[style={draw,circle},xshift=-40] {} }
                } 
            }
        }
    }
    child { node[style={draw,circle}] {f} 
        child[grow=down] { {}
            child[grow=down] { {}
            child[dashed,grow=down] { node[solid,style={draw,circle}] {$m_2$}
                child[grow=down,solid] { node {$T^{m_2}_0$} }
                child[solid] { node[xshift=15] {$T^{m_2}_{1,\dots, s(m_2)}$} } 
                } 
            } 
        }
        child { node[xshift=10] {$T^f_{1,\dots,s(f)}$} } 
    };
\end{tikzpicture}
}}
\subfigure{
\scalebox{.7}{\begin{tikzpicture}[very thick]
\draw [thick] [->] (2,10) -- (3,10);
\node at (2.5,10) [above] {$(c,d)$};
\draw [white] (2,3) -- (3,3);
\end{tikzpicture}
}}
\subfigure{
\scalebox{.6}{
\begin{tikzpicture}[very thick]
  \node [style={draw,circle}]{d}
    child { node[style={draw,circle}] {e} 
        child { node[xshift=-10]    {$T^e_{0,\dots,s(e)-1}$} }
        child[grow=down]{ {} 
            child[grow=down,dashed]{ node[solid,style={draw,circle}] {g}
                child[solid]{ node[xshift=-15] {$T^g_{0,\dots,s(g)-1}$} }
                child[solid] { node[style={draw,circle},xshift=-20] {k}
                    child[solid] { node[xshift=-10] {$T^b_{0,\dots,s(b)-1}$} }
                    child[grow=down] { {} 
                        child[dashed]{ node[solid,style={draw,circle}] {h}
                            child[solid]{ node[xshift=-20] {$T^h_{0,\dots,s(h)-1}$} }
                            child[solid,grow=down] { node [style={draw,circle}] {a} 
                                child { node {$T^a_0$} }
                                child[grow=down]{ node {$T^a_{1,\dots,s(a)-1}$} }
                                child { node[style={draw,circle},xshift=-10] {} } 
                            }
                        }
                    }    
                } 
            }
        }
    }
    child { node[style={draw,circle},xshift=25] {f} 
        child[grow=down] { {}
            child[grow=down] { {}
            child[dashed,grow=down] { node[solid,style={draw,circle}] {$m_2$}
                child[solid,grow=down] { node[style={draw,circle}] {c}
                    child { node[style={draw,circle},xshift=25] {} }
                    child[grow=down,solid,xshift=-15]{ {}
                        child[grow=down,dashed] { node[solid,style={draw,circle}] {$m_1$}
                            child[grow=down,solid] { node {$T^{m_1}_0$} } 
                            child[solid] { node[xshift=20] {$T^{m_1}_{1,\dots, s(m_1)}$} }  
                            }
                        }    
                    child { node {$T^c_{2,\dots,s(c)-2}$ } }
                    child { node {$T^{m_2}_0$} }
                }
                child[solid] { node[xshift=15] {$T^{m_2}_{1,\dots, s(m_2)}$} } 
                } 
            } 
        }
        child { node[xshift=10] {$T^f_{1,\dots,s(f)}$} } 
    };
\end{tikzpicture}
}}
\subfigure{
\scalebox{.7}{\begin{tikzpicture}[very thick]
\draw [thick] [->] (2,10) -- (3,10);
\node at (2.5,10) [above] {$(a,d)$};
\draw [white] (2,3) -- (3,3);
\end{tikzpicture}
}}
\subfigure{
\scalebox{.6}{
\begin{tikzpicture}[very thick]
  \node [style={draw,circle}]{d}
    child { node[style={draw,circle}] {e} 
        child { node[xshift=-10]    {$T^e_{0,\dots,s(e)-1}$} }
        child[grow=down]{ {} 
            child[grow=down,dashed]{ node[solid,style={draw,circle}] {g}
                child[solid]{ node[xshift=-15] {$T^g_{0,\dots,s(g)-1}$} }
                child[solid] { node[style={draw,circle},xshift=-20] {k}
                    child[solid] { node[xshift=-10] {$T^b_{0,\dots,s(b)-1}$} }
                    child[grow=down] { {} 
                        child[dashed]{ node[solid,style={draw,circle}] {h}
                            child[solid]{ node[xshift=-20] {$T^h_{0,\dots,s(h)-1}$} }
                            child[grow=down,solid] { node {$T^a_0$} }
                        }
                    }    
                } 
            }
        }
    }
    child { node[style={draw,circle},xshift=50] {f} 
        child[grow=down] { {}
            child[grow=down] { {}
            child[dashed,grow=down] { node[solid,style={draw,circle}] {$m_2$}
                child[solid,grow=down] { node[style={draw,circle}] {c}
                    child[solid] { node[style={draw,circle},xshift=20] {a} 
                        child { node[style={draw,circle},xshift=10] {} }
                        child[grow=down]{ node {$T^a_{1,\dots,s(a)-1}$} }
                        child { node[style={draw,circle},xshift=-10] {} } 
                    }
                    child[grow=down,solid,xshift=10]{ {}
                        child[grow=down,dashed] { node[solid,style={draw,circle}] {$m_1$}
                            child[grow=down,solid] { node {$T^{m_1}_0$} } 
                            child[solid] { node[xshift=20] {$T^{m_1}_{1,\dots, s(m_1)}$} }  
                            }
                        }    
                    child { node[xshift=15] {$T^c_{2,\dots,s(c)-2}$ } }
                    child { node[xshift=10] {$T^{m_2}_0$} }
                }
                child[solid] { node[xshift=15] {$T^{m_2}_{1,\dots, s(m_2)}$} } 
                } 
            } 
        }
        child { node[xshift=10] {$T^f_{1,\dots,s(f)}$} } 
    };
\end{tikzpicture}
}}
\subfigure{
\scalebox{.7}{\begin{tikzpicture}[very thick]
\draw [thick] [->] (2,10) -- (3,10);
\node at (2.5,10) [above] {$(a,c)$};
\draw [white] (2,3) -- (3,3);
\end{tikzpicture}
}}
\subfigure{
\scalebox{.6}{
\begin{tikzpicture}[very thick]
  \node [style={draw,circle}]{d}
    child { node[style={draw,circle}] {e} 
        child { node[xshift=-10]    {$T^e_{0,\dots,s(e)-1}$} }
        child[grow=down]{ {} 
            child[grow=down,dashed]{ node[solid,style={draw,circle}] {g} 
                child[solid]{ node[xshift=-15] {$T^g_{0,\dots,s(g)-1}$} }
                child[solid] { node[style={draw,circle},xshift=-20] {k}
                    child[solid] { node[xshift=-10] {$T^b_{0,\dots,s(b)-1}$} }
                    child[grow=down] { {} 
                        child[dashed]{ node[solid,style={draw,circle}] {h}
                            child[solid]{ node[xshift=-20] {$T^h_{0,\dots,s(h)-1}$} }
                            child[grow=down,solid] { node {$T^a_0$} }
                        }
                    }    
                }
            }
        }
    }
    child { node[style={draw,circle},xshift=20] {f} 
        child[grow=down] { {}
            child[dashed,grow=down] { node[solid,style={draw,circle}] {$m_2$}
                child[solid,grow=down] { node[style={draw,circle}] {c} 
                    child { node[style={draw,circle},xshift=20] {} }
                    child[grow=down,xshift=-20] { {}
                        child[dashed,grow=down] { node[solid,style={draw,circle}] {$m_1$}
                            child[solid,grow=down] { node [style={draw,circle}] {a}
                                child { node[style={draw,circle},xshift=5] {} }
                                child[grow=down]{ node {$T^a_{1,\dots,s(a)-1}$} }
                                child[solid]{ node {$T^{m_1}_0$} }
                            }
                            child[solid] { node[xshift=15] {$T^{m_1}_{1,\dots,s(m_1)}$} }
                        }
                    }
                    child { node {$T^c_{2,\dots,s(c)-2}$ }}
                    child { node {$T^{m_2}_0$} }
                    }
                child[solid] { node[xshift=15] {$T^{m_2}_{1,\dots, s(m_2)}$} }
                }
            }
        child { node[xshift=10] {$T^f_{1,\dots,s(f)}$} }
    };
\end{tikzpicture}
}}
\caption{The length three side of an $a\in T^c_{0}$ pentagon from \cref{lem:addedinvsetcharact}. $m_1$ is the smallest element of ${_LT^c_1}$ that is larger than $a$ and $m_2$ is the smallest element of $_LT^d_{j+1}$ that is larger than $c$.}
\label{fig:sc0pentalongforproof}
\end{figure}

It remains to show that there is an {\sdt} $P'$ with $\inv{P'}-\inv{T} = A_T(a,b)\cup A_T(c,d) \cup F_T(a,c)$. We claim there is a saturated chain \[\rot{T}{Q}{(c,d)} \rot{}{P}{(a,d)} \rot{}{P'}{(a,c)} \] and that $\inv{P'}-\inv{T} = A_T(a,b)\cup A_T(c,d) \cup F_T(a,c)$. \cref{fig:sc0pentalongforproof} illustrates this chain and guides the proof. 

We first show $(a,d)$ is a tree ascent of $Q$ and then that $(a,c)$ is a tree ascent of the {\sdt} $P$ resulting from the {\str} of $Q$ along $(a,d)$. We recall that to show that $(a,d)$ is a tree ascent of $Q$, it suffices to show that $a\in {_RT^d_j}$ for some $j<s(d)$ and that if $s(a)>0$, then $T^a_{s(a)}$ is a leaf and similarly for $(a,c)$ in $P$.

We observe that $c\in {_RT^d_j}$ for some $j<s(d)$ since $(c,d)$ is a tree ascent of $T$. Also, $a \in {_RT^c_0}$ since $(a,c)$ is a tree ascent of $T$ with $a \in T^c_0$. Then by \cref{rmk:treepicture}, $a \in {_RQ^d_j}$ since $f=c$ was the only $a<f<d$ with $a\in T^f_k$ and $k<s(f)$. Further, \cref{rmk:treepicture} implies $Q^a=T^a$. If $s(a)>0$, then $T^a_{s(a)}$ is a leaf because $(a,c)$ is a tree ascent of $T$. So $Q^a_{s(a)}$ would be a leaf also. Hence, $(a,d)$ is a tree ascent of $Q$.
   
Next we observe that $Q^c_0$ is a leaf by \cref{rmk:treepicture} and the fact that $0<s(c)$ by supposition. Thus, also by \cref{rmk:treepicture}, $P^c_0 = P^a$. Hence, $a \in {_RP^c_0}$. Again, since $Q^c_0$ is a leaf, $P^a_{s(a)}$ is a leaf by \cref{rmk:treepicture}. Hence, $(a,c)$ is a tree ascent of $P$. Therefore, we have the claimed saturated chain.

Now by \cref{lem:inversionsadded}, $\inv{P'}-\inv{T} = A_T(c,d) \cup A_Q(a,d) \cup A_P(a,c)$. But by \cref{rmk:treepicture}, we have $Q^a = T^a$ and $P^a= T^a\setminus 0$. Hence, $A_Q(a,d) = F_T(a,c)$. Further, since $b=c$, $A_P(a,c)=A_T(a,b)$. Therefore, $\inv{Z\vee Q} - \inv{T} = A_T(a,b)\cup A_T(c,d) \cup F_T(a,c)$.
\end{proof}

In the following lemma, we establish that in the interval $[T,Z\vee Q]$ for any $T\precdot Z,Q$, the only atoms are $Z$ and $Q$. We use this in part of the proof that there are only two maximal chains in such an interval. The proof of this lemma relies on \cref{lem:addedinvsetcharact} and our restrictions on tree ascents from \cref{lem:nolowerascents}. We can visually verify this lemma in the three examples of {\swo} given in \cref{fig:swoexamples}.

\begin{lemma}\label{lem:nootheratoms}
Let $T$ be an {\sdt}. Let $1\leq a<b \leq n$ and $1\leq c<d \leq n$ be such that $(a,b)$ and $(c,d)$ are tree ascents of $T$ with $a<c$. Suppose $\rot{T}{Z}{(a,b)}$ and $\rot{T}{Q}{(c,d)}$, then $Z$ and $Q$ are the only atoms in $[T,Z\vee Q]$.
\end{lemma}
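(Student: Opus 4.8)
The plan is to identify the atoms of $[T,Z\vee Q]$ with tree ascents of $T$ and then to rule out all candidates except $(a,b)$ and $(c,d)$ by examining a single inversion forced by each candidate rotation. By \cref{thm:covers}, every element $W$ covering $T$ has the form $\rot{T}{W}{(x,y)}$ for a unique tree ascent $(x,y)$ of $T$, and such a $W$ is an atom of the interval precisely when $W\preceq Z\vee Q$, i.e.\ when $\inv{W}\subseteq\inv{Z\vee Q}$ (in the order of \cref{def:swkorder}, as used throughout the preceding proofs). So first I would reduce this containment to a clean statement about the inversions added by the various rotations.

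For the reduction: by \cref{lem:inversionsadded} the multiset difference $\inv{W}-\inv{T}$ is exactly $A_T(x,y)$, with every pair occurring with multiplicity one; by \cref{lem:addedinvsetcharact} we have $\inv{Z\vee Q}-\inv{T}=A_T(a,b)\cup A_T(c,d)\cup F_T(a,c)$; and by \cref{lem:nocommoninvsadded} these three sets are pairwise disjoint, so this difference also has all multiplicities equal to one. Since $\inv{T}\subseteq\inv{W}$ and $\inv{T}\subseteq\inv{Z\vee Q}$, the containment $\inv{W}\subseteq\inv{Z\vee Q}$ is then equivalent to the set inclusion $A_T(x,y)\subseteq A_T(a,b)\cup A_T(c,d)\cup F_T(a,c)$. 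The crucial observation I would exploit is that $A_T(x,y)$ always contains the pair $(y,x)$, since the root $x$ lies in $T^x\setminus 0$ in the description of \cref{lem:inversionsadded}; hence $(y,x)$ must lie in one of the three sets on the right.

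Then I would split into three cases according to which set contains $(y,x)$. If $(y,x)\in A_T(a,b)$, then $y=b$ and $x\in T^a\setminus 0$, so $x\le a$; the case $x=a$ gives $W=Z$, while $x<a$ is impossible, because when $s(a)>0$ the pair $(x,b)$ cannot be a tree ascent by \cref{lem:nolowerascents}, and when $s(a)=0$ we have $T^a\setminus 0=\{a\}$ by \cref{cor:invsadded0}, forcing $x=a$. The case $(y,x)\in A_T(c,d)$ is symmetric with $c$ in place of $a$ and yields $W=Q$. The remaining case $(y,x)\in F_T(a,c)$ can only arise when $b=c$ and $a\in T^c_0$ (so $s(c)>0$), giving $y=d$ and $x\in T^a\setminus 0\subseteq T^c_0$, whence $x\le a<c<d$; here I would derive a contradiction by noting that any tree ascent $(x,d)$ must satisfy condition (ii) of \cref{def:treeascent} at the intermediate vertex $c$, which forces $x\in T^c_{s(c)}$, incompatible with $x\in T^c_0$ and $s(c)>0$. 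Thus this case produces no atom.

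I expect the main obstacle to be the bookkeeping in the reduction step, namely verifying that all relevant inversion multiplicities are exactly one so that the inclusion $\inv{W}\subseteq\inv{Z\vee Q}$ genuinely collapses to $A_T(x,y)\subseteq A_T(a,b)\cup A_T(c,d)\cup F_T(a,c)$, together with disposing of the $F_T(a,c)$ case, which is the one subtlety not already covered by \cref{lem:nolowerascents}. Once these are settled, the observation that $Z$ and $Q$ are themselves atoms (each covers $T$ and lies weakly below $Z\vee Q$, and they are distinct since $a\neq c$) completes the argument.
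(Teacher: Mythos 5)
Your proposal is correct and follows essentially the same route as the paper: both reduce the question to determining which tree ascents $(x,y)$ of $T$ have $(y,x)$ lying in $A_T(a,b)\cup A_T(c,d)\cup F_T(a,c)$ (via \cref{thm:covers}, \cref{lem:inversionsadded}, and \cref{lem:addedinvsetcharact}), then eliminate all candidates other than $(a,b)$ and $(c,d)$ using \cref{lem:nolowerascents} for the two $A_T$ sets and a direct violation of condition (ii) of \cref{def:treeascent} at the vertex $b=c$ for the $F_T(a,c)$ set. Your treatment is slightly more explicit than the paper's in justifying the multiplicity-one reduction and in separating the $s(a)=0$ subcase, but the underlying argument is the same.
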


\begin{proof}
First, \cref{thm:covers} implies that atoms of the $[T,Z\vee Q]$ correspond to the tree ascents $(e,f)$ of $T$ such that $(f,e)\in \inv{Z\vee Q} - \inv{T}$. Thus, by \cref{lem:addedinvsetcharact} the atoms of $[T,Z\vee Q]$ correspond to pairs $(f,e)\in A_T(a,b)\cup A_T(c,d) \cup F_T(a,c)$ such that $(e,f)$ is a tree ascent of $T$. By \cref{lem:inversionsadded} and \cref{lem:nolowerascents}, the only pairs $(f,e)\in A_T(a,b) \cup A_T(c,d)$ such that $(e,f)$ are tree ascents of $T$ are $(f,e)=(b,a),(d,c)$. Further, if $F_T(a,c)\neq \emptyset$ and $(f,e)\in F_T(a,c)$, then $b=c$, $f=d$, and $e\in T^a\setminus 0$ by \cref{def:tertaddedinvs}. For all $e\in T^a$, $e\in T^b_k$ with $k<s(b)$ since $(a,b)$ is a tree ascent of $T$. Then since $b<d$, $(e,d)$ such that $e\in T^a$ does not satisfy condition (ii) of \cref{def:treeascent}, and so is not a tree ascent of $T$. Therefore, the only atoms of $[T,Z\vee Q]$ are $Z$ and $Q$. 
\end{proof}

In the next lemma, we consider the case of $\rot{T}{Z}{(a,b)}$ and $\rot{T}{Q}{(c,d)}$ for tree ascents $(a,b)$ and $(c,d)$ of $T$, but when $(c,d)$ is not a tree ascent of $Z$. This is one of the cases from \cref{lem:stopbeingascent}. We construct a saturated chain from $T$ to $Z\vee Q$. This is similar to the construction of the saturated chain in the proof of \cref{lem:addedinvsetcharact}. This new chain is illustrated in \cref{fig:sc-1pentalongforproof} below. As an example, we can construct this chain using the {\sdt} in \cref{fig:sdtexample} and its tree ascents $(2,4)$ and $(4,5)$.

\begin{figure}[H]
\centering
\subfigure{
\scalebox{.6}{
\begin{tikzpicture}[very thick]
  \node [style={draw,circle}]{d}
    child { node[style={draw,circle}] {e} 
        child { node[xshift=-10]    {$T^e_{0,\dots,s(e)-1}$} }
        child[grow=down]{ {} 
            child[grow=down,dashed]{ node[solid,style={draw,circle}] {g} 
                child[solid]{ node[xshift=-15] {$T^g_{0,\dots,s(g)-1}$} }
                child[solid,grow=down] { node[style={draw,circle}] {c} 
                    child { node[xshift=-15] {$T^c_0$} } 
                    child { node[xshift=-20] {$T^c_{1,\dots,s(c)-2}$ }}
                    child[grow=down] { node[style={draw,circle}] {k}
                        child { node[xshift=-10] {$T^b_{0,\dots,s(b)-1}$} }
                        child[grow=down] { {} 
                            child[dashed]{ node[solid,style={draw,circle}] {h}
                                child[solid]{ node {$T^h_0$} }
                                child[solid,grow=down] { node [style={draw,circle}] {a} 
                                    child { node {$T^a_0$} }
                                    child[grow=down]{ node {$T^a_{1,\dots,s(a)-1}$} }
                                    child { node[style={draw,circle},xshift=-10] {} } 
                                }
                            }
                        }    
                    } 
                    child { node[style={draw,circle},xshift=-40] {} }
                } 
            }
        }
    }
    child { node[style={draw,circle}] {f} 
        child[grow=down] { {}
            child[dashed,grow=down] { node[solid,style={draw,circle}] {$m_2$}
                child[solid,grow=down] { node[solid,style={draw,circle}] {i} 
                    child[grow=down]{ {}
                        child[dashed,grow=down] { node[solid,style={draw,circle}] {$m_1$}
                            child[solid,grow=down]{ node {$T^{m_1}_0$} }
                            child[solid] { node[xshift=15] {$T^{m_1}_{1,\dots,s(m_1)}$} } 
                        } 
                    }
                    child { node[xshift=10] {$T^i_{1,\dots,s(i)}$} } 
                } 
            child[solid] { node[xshift=15] {$T^{m_2}_{1,\dots, s(m_2)}$} } 
            } 
        }
        child { node[xshift=10] {$T^f_{1,\dots,s(f)}$} } 
    };
\end{tikzpicture}
}}
\subfigure{
\scalebox{.8}{\begin{tikzpicture}[very thick]
\draw [thick] [->] (2,10) -- (3,10);
\node at (2.5,10) [above] {$(a,c)$};
\draw [white] (2,3) -- (3,3);
\end{tikzpicture}
}}
\subfigure{
\scalebox{.6}{
\begin{tikzpicture}[very thick]
  \node [style={draw,circle}]{d}
    child { node[style={draw,circle},xshift=-20] {e} 
        child { node[xshift=-10]    {$T^e_{0,\dots,s(e)-1}$} }
        child[grow=down]{ {} 
            child[grow=down,dashed]{ node[solid,style={draw,circle}] {g} 
                child[solid]{ node[xshift=-15] {$T^g_{0,\dots,s(g)-1}$} }
                child[solid,grow=down] { node[style={draw,circle}] {c} 
                    child { node[xshift=-25] {$T^c_0$} } 
                    child { node[xshift=-30] {$T^c_{1,\dots,s(c)-2}$ }}
                    child[grow=down] { node[style={draw,circle},xshift=-15] {k}
                        child { node[xshift=-10] {$T^b_{0,\dots,s(b)-1}$} }
                        child[grow=down] { {} 
                            child[dashed]{ node[solid,style={draw,circle}] {h}
                                child[solid]{ node {$T^h_0$} }
                                child[solid,grow=down] { node {$T^a_0$} }
                            }
                        }    
                    } 
                    child[solid,grow=down] { node [style={draw,circle},xshift=30] {a} 
                        child { node[xshift=8,style={draw,circle}] {} }
                        child[grow=down]{ node {$T^a_{1,\dots,s(a)-1}$} }
                        child { node[style={draw,circle},xshift=-10] {} } 
                        }
                } 
            }
        }
    }
    child { node[style={draw,circle},xshift=30] {f} 
        child[grow=down] { {}
            child[dashed,grow=down] { node[solid,style={draw,circle}] {$m_2$}
                child[solid,grow=down] { node[solid,style={draw,circle}] {i} 
                    child[grow=down]{ {}
                        child[dashed,grow=down] { node[solid,style={draw,circle}] {$m_1$}
                            child[solid,grow=down]{ node {$T^{m_1}_0$} }
                            child[solid] { node[xshift=15] {$T^{m_1}_{1,\dots,s(m_1)}$} } 
                        } 
                    }
                    child { node[xshift=10] {$T^i_{1,\dots,s(i)}$} } 
                } 
            child[solid] { node[xshift=15] {$T^{m_2}_{1,\dots, s(m_2)}$} } 
            } 
        }
        child { node[xshift=10] {$T^f_{1,\dots,s(f)}$} } 
    };
\end{tikzpicture}
}}
\subfigure{
\scalebox{.8}{\begin{tikzpicture}[very thick]
\draw [thick] [->] (2,10) -- (3,10);
\node at (2.5,10) [above] {$(a,d)$};
\draw [white] (2,3) -- (3,3);
\end{tikzpicture}
}}
\subfigure{
\scalebox{.6}{
\begin{tikzpicture}[very thick]
  \node [style={draw,circle}]{d}
    child { node[style={draw,circle},xshift=-20] {e} 
        child { node[xshift=-10]    {$T^e_{0,\dots,s(e)-1}$} }
        child[grow=down]{ {} 
            child[grow=down,dashed]{ node[solid,style={draw,circle}] {g} 
                child[solid]{ node[xshift=-15] {$T^g_{0,\dots,s(g)-1}$} }
                child[solid,grow=down] { node[style={draw,circle}] {c} 
                    child { node[xshift=-25] {$T^c_0$} } 
                    child { node[xshift=-20] {$T^c_{1,\dots,s(c)-2}$ }}
                    child[grow=down] { node[style={draw,circle},] {k}
                        child { node[xshift=-10] {$T^b_{0,\dots,s(b)-1}$} }
                        child[grow=down] { {} 
                            child[dashed]{ node[solid,style={draw,circle}] {h}
                                child[solid]{ node {$T^h_0$} }
                                child[solid,grow=down] { node {$T^a_0$} }
                            }
                        }    
                    }
                    child { node[style={draw,circle},xshift=-30] {} }
                } 
            }
        }
    }
    child { node[style={draw,circle},xshift=10] {f} 
        child[grow=down] { {}
            child[dashed,grow=down] { node[solid,style={draw,circle}] {$m_2$}
                child[solid,grow=down] { node[solid,style={draw,circle}] {i} 
                    child[grow=down]{ {}
                        child[dashed,grow=down] { node[solid,style={draw,circle}] {$m_1$}
                            child[solid,grow=down] { node [style={draw,circle}] {a} 
                                child { node[xshift=8,style={draw,circle}] {} }
                                child[grow=down]{ node {$T^a_{1,\dots,s(a)-1}$} }
                                child[solid,grow=down]{ node[xshift=35] {$T^{m_1}_0$} } 
                            }
                            child[solid] { node[xshift=15] {$T^{m_1}_{1,\dots,s(m_1)}$} } 
                        } 
                    }
                    child { node[xshift=10] {$T^i_{1,\dots,s(i)}$} } 
                } 
            child[solid] { node[xshift=15] {$T^{m_2}_{1,\dots, s(m_2)}$} } 
            } 
        }
        child { node[xshift=10] {$T^f_{1,\dots,s(f)}$} } 
    };
\end{tikzpicture}
}}
\subfigure{
\scalebox{.8}{\begin{tikzpicture}[very thick]
\draw [thick] [->] (2,10) -- (3,10);
\node at (2.5,10) [above] {$(c,d)$};
\draw [white] (2,3) -- (3,3);
\end{tikzpicture}
}}
\subfigure{
\scalebox{.6}{
\begin{tikzpicture}[very thick]
  \node [style={draw,circle}]{d}
    child { node[style={draw,circle},xshift=-20] {e} 
        child { node[xshift=-10]    {$T^e_{0,\dots,s(e)-1}$} }
        child[grow=down]{ {} 
            child[grow=down,dashed]{ node[solid,style={draw,circle}] {g} 
                child[solid]{ node[xshift=-15] {$T^g_{0,\dots,s(g)-1}$} }
                child[solid,grow=down] { node {$T^c_0$} }
            }
        }
    }
    child { node[style={draw,circle}] {f} 
        child[grow=down] { {}
            child[dashed,grow=down] { node[solid,style={draw,circle}] {$m_2$}
                child[solid,grow=down] { node[style={draw,circle}] {c} 
                    child { node[xshift=-25] {$T^c_0$} } 
                    child { node[xshift=-20] {$T^c_{1,\dots,s(c)-2}$ }}
                    child[grow=down] { node[style={draw,circle},] {k}
                        child { node[xshift=-10] {$T^b_{0,\dots,s(b)-1}$} }
                        child[grow=down] { {} 
                            child[dashed]{ node[solid,style={draw,circle}] {h}
                                child[solid]{ node {$T^h_0$} }
                                child[solid,grow=down] { node {$T^a_0$} }
                            }
                        }    
                    }
                    child[solid] { node[solid,style={draw,circle},xshift=-30] {i} 
                        child[grow=down]{ {}
                            child[dashed,grow=down] { node[solid,style={draw,circle}] {$m_1$}
                                child[solid,grow=down] { node [style={draw,circle}] {a} 
                                    child { node[xshift=8,style={draw,circle}] {} }
                                    child[grow=down]{ node {$T^a_{1,\dots,s(a)-1}$} }
                                    child[solid,grow=down]{ node[xshift=35] {$T^{m_1}_0$} } 
                                }
                                child[solid] { node[xshift=15] {$T^{m_1}_{1,\dots,s(m_1)}$} } 
                            } 
                        }
                        child { node[xshift=10] {$T^i_{1,\dots,s(i)}$} } 
                    }
                }
            child[solid] { node[xshift=15] {$T^{m_2}_{1,\dots, s(m_2)}$} } 
            } 
        }
        child { node[xshift=10] {$T^f_{1,\dots,s(f)}$} } 
    };
\end{tikzpicture}
}}
\caption{The length three side of an $a\in T^c_{s(c)-1}$ pentagon from \cref{lem:chaininsc-1pent}. $m_1$ is the smallest element of ${_LT^d_{j+1}}$ that is larger than $a$ and $m_2$ is the smallest element of $_LT^d_{j+1}$ that is larger than $c$.}
\label{fig:sc-1pentalongforproof}
\end{figure}
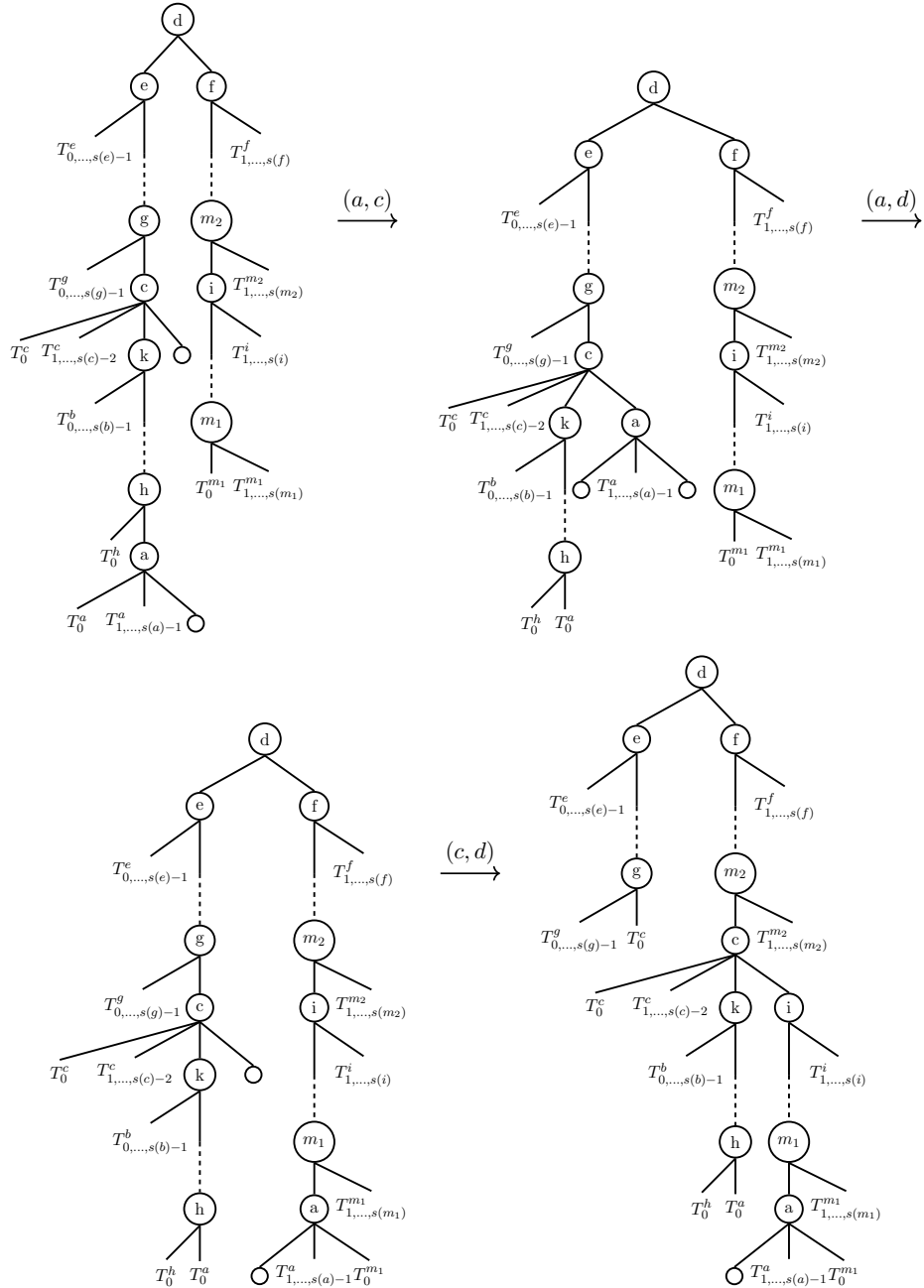

\begin{lemma}\label{lem:chaininsc-1pent}
Let $T\precdot Z,Q$ be cover relations in {\swo} corresponding to $\rot{T}{Z}{(a,b)}$ and $\rot{T}{Q}{(c,d)}$ for tree ascents $(a,b)$ and $(c,d)$ of $T$ with $a<c$. Suppose $(c,d)$ is not a tree ascent of $Z$, then there is a saturated chain of the form $\rot{T}{Z}{(a,b)}\rot{}{P}{(a,d)}\rot{}{Z\vee Q}{(c,d)}.$
\end{lemma}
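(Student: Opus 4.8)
The plan is to build the chain one rotation at a time, checking that each indicated pair is an honest tree ascent, and then to pin down the terminal tree by computing its inversion set. First I would invoke \cref{lem:stopbeingascent}: since $(c,d)$ fails to be a tree ascent of $Z$ and $a<c$, we must have $b=c$, $s(c)>0$, and $a\in T^c_{s(c)-1}$. Because $(a,b)=(a,c)$ is a tree ascent, \cref{rmk:ascentsubtree} upgrades this to $a\in {_RT^c_{s(c)-1}}$, so in the first rotation $\rot{T}{Z}{(a,b)}$ the relevant index of \cref{rmk:treepicture} is $s(c)-1$, and the vertex $m$ there equals $b=c$ because $T^c_{s(c)}$ is a leaf by condition (iii) of $(c,d)$ being a tree ascent of $T$. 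Reading off \cref{rmk:treepicture}, $a$ becomes the rightmost child of $c$ in $Z$, that is $Z^c_{s(c)}=Z^a$, and both $Z^a_0$ and $Z^a_{s(a)}$ are leaves.

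Next I would show $(a,d)$ is a tree ascent of $Z$. Since $a<c$, the rotation along $(a,b)$ moves no vertex weakly above $c$ (as in the proof of \cref{lem:stopbeingascent}), so $c$ keeps its position relative to $d$; as $(c,d)$ is a tree ascent of $T$ we have $c\in {_RT^d_j}={_RZ^d_j}$ for some $j<s(d)$, and $c$ is the deepest vertex of this rightmost path because $T^c_{s(c)}$ is a leaf. In $Z$ the rightmost child of $c$ is now $a$, so the rightmost path extends through $a$ and $a\in {_RZ^d_j}$, giving conditions (i) and (ii) of \cref{def:treeascent} via \cref{rmk:ascentsubtree}; condition (iii) holds because $Z^a_{s(a)}$ is a leaf. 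Letting $\rot{Z}{P}{(a,d)}$, I would then verify $(c,d)$ is a tree ascent of $P$: in this rotation the parent of $a$ is $c$, so \cref{rmk:treepicture} replaces $Z^c_{s(c)}=Z^a$ by the leaf $Z^a_0$, whence $P^c_{s(c)}$ is a leaf; and moving $a$ (which is $<c$) upward leaves $c$ as the deepest vertex of the rightmost path from the $j$th child of $d$, so $c\in {_RP^d_j}$. This gives conditions (i),(ii), and (iii) holds since $s(c)>0$ and $P^c_{s(c)}$ is a leaf. Thus the chain $\rot{T}{Z}{(a,b)}\rot{}{P}{(a,d)}\rot{}{P'}{(c,d)}$ exists, where $P'$ is the rotation of $P$ along $(c,d)$.

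It remains to identify $P'$ with $Z\vee Q$, which I would do by comparing inversion sets. Applying \cref{lem:inversionsadded} to the three rotations gives $\inv{P'}-\inv{T}=A_T(a,b)\cup A_Z(a,d)\cup A_P(c,d)$. Here $A_Z(a,d)=\sett{(d,e)}{e\in T^a\setminus 0}$, because $Z^a$ and $T^a\setminus 0$ carry the same labels (the first rotation only turned $Z^a_0$ and $Z^a_{s(a)}$ into leaves), and $A_P(c,d)=\sett{(d,e)}{e\in P^c\setminus 0}$. The key computation is that $P^c\setminus 0$ and $(T^c\setminus 0)\setminus(T^a\setminus 0)$ carry the same labels: the two rotations carry exactly the labels of $T^a\setminus 0$ out of the subtree rooted at $c$ while keeping $T^c_0$ fixed. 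From this I would conclude $A_Z(a,d)\cup A_P(c,d)=A_T(c,d)\cup F_T(a,c)$, so that $\inv{P'}-\inv{T}=A_T(a,b)\cup A_T(c,d)\cup F_T(a,c)=\inv{Z\vee Q}-\inv{T}$ by \cref{lem:addedinvsetcharact}; since $P'$ and $Z\vee Q$ both lie above $T$ this forces $\inv{P'}=\inv{Z\vee Q}$, hence $P'=Z\vee Q$.

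I expect the final inversion-tracking step to be the main obstacle, and in particular the label bookkeeping for $P^c\setminus 0$ together with the case split on whether $a\in T^c_0$. When $a\notin T^c_0$ (so $s(c)\geq 2$) we have $F_T(a,c)=\emptyset$ and $T^a\setminus 0\subseteq T^c\setminus 0$, so $A_Z(a,d)$ and $A_P(c,d)$ reassemble precisely into $A_T(c,d)$; when $a\in T^c_0$ (forcing $s(c)=1$) the set $T^a\setminus 0$ is disjoint from $T^c\setminus 0$, and the pairs coming from $A_Z(a,d)$ are exactly those of the now-nonempty $F_T(a,c)$. Keeping careful track of these labels through two successive rotations, guided by \cref{fig:sc-1pentalongforproof}, is the delicate part; everything else is a routine application of \cref{rmk:treepicture} and \cref{def:treeascent}.
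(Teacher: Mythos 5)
Your proposal is correct and follows essentially the same route as the paper's proof: it invokes \cref{lem:stopbeingascent} to force $b=c$ and $a\in T^c_{s(c)-1}$, verifies via \cref{rmk:treepicture} and \cref{rmk:ascentsubtree} that $(a,d)$ is a tree ascent of $Z$ and $(c,d)$ of $P$, and then identifies the endpoint with $Z\vee Q$ by matching $\inv{P'}-\inv{T}$ against \cref{lem:addedinvsetcharact}, including the same case split on whether $a\in T^c_0$.
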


\begin{proof}
First, by \cref{lem:stopbeingascent}, $b=c$, $a\in T^c_{s(c)-1}$, and $s(c)>0$. So the two tree ascents of interest in $T$ are $(a,c)$ and $(c,d)$. We claim that there is a saturated chain \[\rot{T}{Z}{(a,c)}\rot{}{P}{(a,d)} \rot{}{P'}{(c,d)}. \] We first show that $(a,d)$ is a tree ascent of $Z$, and then that $(c,d)$ is a tree ascent of the {\sdt} $P$ resulting from the {\str} of $Z$ along $(a,d)$. Then we show that $P'=Z\vee Q$. This is illustrated in \cref{fig:sc-1pentalongforproof} which also guides the proof.

First, we note that $c\in {_RT^d_j}$ for some $0\leq j<s(d)$ since $(c,d)$ is a tree ascent of $T$. Thus, $c\in {_RZ^d_j}$ because the tree rotation of $T$ along $(a,b)$ does not move any vertices above $a$ in $T$. Also, $T^c_{s(c)}$ is a leaf because $(c,d)$ is a tree ascent of $T$ and $s(c)>0$. Then by \cref{rmk:treepicture}, $Z^c_{s(c)}=T^a\setminus 0$ so $a$ is the $s(c)$th child $c$ in $Z$. Thus, $a\in {_RZ^d_j}$ since $a$ is the $s(c)$th child of $c$ in $Z$. Further, $Z^a_{s(a)}$ is a leaf again by \cref{rmk:treepicture} and the fact that $T^c_{s(c)}$ is a leaf. Hence, $(a,d)$ is a tree ascent of $Z$. 

Now, again by \cref{rmk:treepicture}, $c\in  {_RP^d_j}$ where $j$ is the same $j$ as above so $0\leq j<s(d)$. Lastly, $P^c_{s(c)} = Z^a_0$ which is a leaf by \cref{rmk:treepicture} and the fact that $T^c_{s(c)}$ is a leaf. Thus, $(c,d)$ is a tree ascent of $P$.
    
Now we claim $P'=Z\vee Q$. By \cref{lem:inversionsadded}, $\inv{P'}-\inv{T}= A_T(a,c) \cup A_Z(a,d) \cup A_P(c,d)$. Thus, by \cref{lem:addedinvsetcharact}, it remains to show that $A_T(a,c) \cup A_Z(a,d) \cup A_P(c,d) = A_T(a,b)\cup A_T(c,d) \cup F_T(a,c)$. Since $b=c$, $A_T(a,b)=A_T(a,c)$. To show $A_Z(a,d)\cup A_P(c,d) = A_T(c,d) \cup F_T(a,c)$, there are two cases because $b=c$. Either $a\in T^c_0$ or $a\not \in T^c_0$, that is, $F_T(a,c)$ is possibly non-empty or $F_T(a,c)=\emptyset$, respectively, by \cref{def:tertaddedinvs}.

Suppose $a\in T^c_0$. Then, as above, by \cref{rmk:treepicture} and the fact that $T^c_{s(c)}$ is a leaf, $Z^a=T^a\setminus 0$. Thus, by \cref{lem:inversionsadded} and \cref{def:tertaddedinvs}, $A_z(a,d)=F_T(a,c)$. Further, by \cref{rmk:treepicture} along with the fact that $a\in T^c_0$ and our previous observations that $P^c_{s(c)}$ and $T^c_{s(c)}$ are leaves, $P^c\setminus 0 = T^c\setminus 0$. Thus, by \cref{lem:inversionsadded}, $A_P(c,d)=A_T(c,d)$. 

Now suppose $a\not \in T^c_0$ so $F_T(a,c)=\emptyset$. To show that $A_Z(a,d)\cup A_P(c,d) = A_T(c,d)$, we need to show that $T^c\setminus 0 = Z^a\setminus 0 \cup P^c\setminus 0$ as sets of labeled vertices. We previously argued that $Z^c_s(c) =Z^a\setminus 0 = T^a\setminus 0$. Also, as sets of labeled vertices $P^c\setminus 0 = \left(T^c\setminus 0\right) \setminus \left(T^a\setminus 0\right)$ by \cref{rmk:treepicture}. This completes the proof.
\end{proof}

In the next three lemmas, we begin with $[T,Z\vee Q]$ for $T\precdot Z,Q$ having $(a,b)$ and $(c,d)$ the tree ascents of $T$ associated with $Z$ and $Q$, respectively. We prove that three of the relationships given by \cref{lem:stopbeingascent} result in $[T,Z\vee Q]$ having Hasse diagram that is a diamond or a pentagon and that, in any of these three cases, our labeling in \cref{thm:sblabelingsection} satisfies the conditions of an SB-labeling. \cref{thm:covers} characterizing cover relations in {\swo} and \cref{lem:joinoftwoats} along with the chains constructed in \cref{lem:addedinvsetcharact} and \cref{lem:chaininsc-1pent}, establish the two maximal chains of $[T,Z\vee Q]$ in these cases. Thus, the bulk of the proofs the next three lemmas is showing that there are no other maximal chains in $[T,Z\vee Q]$ in these cases using \cref{lem:nootheratoms} and \cref{lem:nolowerascents}. Moreover, the proofs for the two distinct ways a pentagonal interval can arise combine to prove this about the hexagonal case in our proof of our main result \cref{thm:sblabeling}. We note our labeling always satisfies the first condition of an SB-labeling by \cref{rmk:nosamebottomascents}. We also not that all three lemmas can be verified on the appropriate intervals of the examples of {\swo} in (b) and (c) of \cref{fig:swoexamples}. 

\begin{lemma}\label{lem:diamondints}
Let $T\precdot Z,Q$ be cover relations in {\swo} corresponding to $\rot{T}{Z}{(a,b)}$ and $\rot{T}{Q}{(c,d)}$ for tree ascents $(a,b)$ and $(c,d)$ of $T$ with $a<c$. Suppose $(a,b)$ is a tree ascent of $Q$ and $(c,d)$ is a tree ascent of $Z$. Then $[T, Z\vee Q]$ has Hasse diagram which is a diamond and the edge labeling of \cref{thm:sblabelingsection} on its two maximal chains satisfies \cref{def:sblabeling}.
\end{lemma}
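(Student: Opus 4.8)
The plan is to identify the two maximal chains of $[T,Z\vee Q]$, show that these exhaust the interval, and then read off the labels. By \cref{lem:joinoftwoats} we have $\inv{Z\vee Q} = \tcp{\inv{Z}}{(d,c)} = \tcp{\inv{Q}}{(b,a)}$, where the first equality uses $\inv{Z}=\tcp{\inv{T}}{(b,a)}$ and the second uses the reversed order of the pairs. Since by hypothesis $(c,d)$ is a tree ascent of $Z$ and $(a,b)$ is a tree ascent of $Q$, \cref{def:givesstreestrot} identifies these equalities as $\rot{Z}{Z\vee Q}{(c,d)}$ and $\rot{Q}{Z\vee Q}{(a,b)}$, and \cref{thm:covers} upgrades them to cover relations $Z \precdot Z\vee Q$ and $Q \precdot Z\vee Q$. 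Combined with the given covers $\rot{T}{Z}{(a,b)}$ and $\rot{T}{Q}{(c,d)}$, this yields the two saturated chains $\rot{T}{Z}{(a,b)}\rot{}{Z\vee Q}{(c,d)}$ and $\rot{T}{Q}{(c,d)}\rot{}{Z\vee Q}{(a,b)}$, carrying labels $a,c$ and $c,a$ under \cref{thm:sblabelingsection}.

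I would then show $[T,Z\vee Q] = \{T,Z,Q,Z\vee Q\}$. By \cref{lem:nootheratoms}, $Z$ and $Q$ are the only atoms of $[T,Z\vee Q]$. Given any $W \in (T,Z\vee Q)$, choose a saturated chain $T = x_0 \precdot x_1 \precdot \cdots \precdot x_k = W$ inside the interval; its first step $x_1$ covers $T$ and lies below $Z\vee Q$, so $x_1$ is an atom, hence $x_1 \in \{Z,Q\}$ and $W \succeq Z$ or $W \succeq Q$. If $W \succeq Z$, then $Z \preceq W \prec Z\vee Q$ together with the cover $Z \precdot Z\vee Q$ forces $W = Z$, and symmetrically $W \succeq Q$ forces $W = Q$. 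Thus the open interval is exactly $\{Z,Q\}$, the Hasse diagram of $[T,Z\vee Q]$ is a $4$-cycle, i.e.\ a diamond, and the two chains above are its only maximal chains.

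It remains to verify \cref{def:sblabeling} for $u = T$, $v = Z$, $w = Q$, with $v\vee w = Z\vee Q$. Condition (i) holds because $\lambda(T,Z) = a \neq c = \lambda(T,Q)$ since $a < c$, as already noted for the labeling in general following \cref{rmk:nosamebottomascents}. For (ii) and (iii), the two maximal chains computed above each use exactly the labels $a$ and $c$, so every saturated chain from $T$ to $Z\vee Q$ uses both $\lambda(T,Z)=a$ and $\lambda(T,Q)=c$ a positive number of times and uses no other labels. The genuinely substantive step is the second paragraph: everything hinges on \cref{lem:nootheratoms} together with the fact, extracted from \cref{lem:joinoftwoats}, that both atoms are themselves covered by the join, which is what collapses the interval to a diamond.
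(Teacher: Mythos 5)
Your proof is correct and follows essentially the same route as the paper's: both derive $Z,Q\precdot Z\vee Q$ from \cref{lem:joinoftwoats} together with the hypothesis that each pair remains a tree ascent after the other rotation, invoke \cref{lem:nootheratoms} to rule out further atoms, and read off the label sequences $a,c$ and $c,a$. The only difference is that you spell out explicitly why having only two atoms, each covered by the join, collapses the interval to a diamond, a step the paper leaves implicit.
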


\begin{proof}
By \cref{lem:joinoftwoats}, $\inv{Z\vee Q} = \tcp{\inv{Z}}{(d,c)}= \tcp{\inv{Q}}{(b,a)}$. Then since $(c,d)$ is a tree ascent of $Z$ and $(a,b)$ is a tree ascent of $Q$, $\rot{R}{Z\vee Q}{(c,d)}$ and $\rot{Q}{Z\vee Q}{(a,b)}$. Hence, $R,Q\precdot Z\vee Q$ Thus, $T\precdot Z \precdot Z\vee Q$ and $T \precdot Q \precdot Z\vee Q$ are two distinct saturated chains from $T$ to $Z\vee Q$. Then to show there is not a third such saturated chain, it suffices to show there is not a third atom in the interval $[T, Z\vee Q]$. We showed this fact as \cref{lem:nootheratoms}.
        
Now we observe that the label sequences of the saturated chains $T\precdot Z \precdot Z\vee Q$ and $T \precdot Q \precdot Z\vee Q$ are $a,c$ and $c,a$, respectively. Therefore, \cref{def:sblabeling} is satisfied.
\end{proof}

\begin{lemma}\label{lem:pentaintssc-1}
Let $T\precdot Z,Q$ be cover relations in {\swo}  corresponding to $\rot{T}{Z}{(a,b)}$ and $\rot{T}{Q}{(c,d)}$ for tree ascents $(a,b)$ and $(c,d)$ of $T$ with $a<c$. Suppose $(a,b)$ is a tree ascent of $Q$ and $(c,d)$ is not a tree ascent of $Z$. Then $[T, Z\vee Q]$ has Hasse diagram which is a pentagon and the edge labeling of \cref{thm:sblabelingsection} on its two maximal chains satisfies \cref{def:sblabeling}.
\end{lemma}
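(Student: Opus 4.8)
The plan is to exhibit two maximal chains in $[T,Z\vee Q]$, of lengths two and three, show they are the only maximal chains, and then read off their labels. First I would invoke \cref{lem:stopbeingascent}: since $(c,d)$ is not a tree ascent of $Z$, we get $b=c$, $s(c)>0$, and $a\in T^c_{s(c)-1}$, so the two ascents in play are $(a,c)$ and $(c,d)$. Because $(a,b)=(a,c)$ is a tree ascent of $Q$, \cref{lem:joinoftwoats} gives $\rot{Q}{Z\vee Q}{(a,c)}$, producing the length-two chain $T\precdot Q\precdot Z\vee Q$ with label sequence $c,a$. On the other side, \cref{lem:chaininsc-1pent} supplies the saturated chain $\rot{T}{Z}{(a,c)}\rot{}{P}{(a,d)}\rot{}{Z\vee Q}{(c,d)}$, a length-three chain with label sequence $a,a,c$.

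Next I would argue these are the only maximal chains, so the Hasse diagram is a pentagon. By \cref{lem:nootheratoms} the only atoms of $[T,Z\vee Q]$ are $Z$ and $Q$, so every maximal chain begins $T\precdot Q$ or $T\precdot Z$. From $Q$ the continuation is forced, since $\rot{Q}{Z\vee Q}{(a,c)}$ makes $Q\precdot Z\vee Q$ a cover relation with nothing strictly between.

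The main obstacle is showing the $Z$-side is equally forced, i.e.\ that $[Z,Z\vee Q]$ is exactly the chain $Z\precdot P\precdot Z\vee Q$. For this I would compute $\inv{Z\vee Q}-\inv{Z}$ using \cref{lem:addedinvsetcharact} together with the disjointness of \cref{lem:nocommoninvsadded}, obtaining $A_T(c,d)\cup F_T(a,c)$; every pair here has larger coordinate $d$ and smaller coordinate lying in $T^c$. Hence an atom of $[Z,Z\vee Q]$ corresponds to a tree ascent $(e,d)$ of $Z$ with $e\in T^c$. Since the rotation along $(a,c)$ rearranges only subtrees of $T^c$ (\cref{rmk:treepicture}), every such $e$ still lies below $c$ in $Z$, and as $e\le c<d$, condition (ii) of \cref{def:treeascent} forces $e\in Z^c_{s(c)}=Z^a$; the case $e=c$ is excluded because $(c,d)$ is not a tree ascent of $Z$. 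Every $e\in Z^a\setminus\{a\}$ satisfies $e<a$, so \cref{lem:nolowerascents} applied to the ascent $(a,d)$ of $Z$ rules all of them out, leaving $(a,d)$, i.e.\ $P$, as the unique atom. The same pattern handles $[P,Z\vee Q]$: there $\inv{Z\vee Q}-\inv{P}=A_P(c,d)$ consists of pairs $(d,e)$ with $e\in P^c\setminus 0$, and \cref{lem:nolowerascents} applied to the ascent $(c,d)$ of $P$ (using $s(c)>0$) shows $(c,d)$ is the only such ascent, so $P\precdot Z\vee Q$ with nothing between. Thus $[Z,Z\vee Q]$ is the claimed chain.

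Finally I would conclude that $[T,Z\vee Q]$ consists of the five elements $T,Z,Q,P,Z\vee Q$ arranged as a pentagon, and verify \cref{def:sblabeling} on its two maximal chains. Condition (i) holds since $\lambda(T,Z)=a\neq c=\lambda(T,Q)$ by \cref{rmk:nosamebottomascents}. The label sequences $c,a$ and $a,a,c$ each use both labels $a$ and $c$ a positive number of times and use no other labels, giving conditions (ii) and (iii).
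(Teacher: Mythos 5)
Your proposal is correct and follows essentially the same route as the paper's proof: the same two chains from \cref{lem:joinoftwoats} and \cref{lem:chaininsc-1pent}, the same reduction via \cref{lem:nootheratoms} to showing $P$ is the unique atom of $[Z,Z\vee Q]$, and the same combination of condition (ii) of \cref{def:treeascent} with \cref{lem:nolowerascents} to rule out other atoms (the paper organizes that last step via the decomposition $A_Z(a,d)\cup A_P(c,d)$ rather than $A_T(c,d)\cup F_T(a,c)$, but the content is identical). The only cosmetic difference is that your separate treatment of $[P,Z\vee Q]$ is unnecessary, since $\rot{P}{Z\vee Q}{(c,d)}$ is already a cover relation in the full lattice.
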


\begin{proof}
\cref{fig:sc-1pentalongforproof} illustrates this case and provides a guide for this proof. First, we observe that $Q\precdot Z\vee Q$ by \cref{lem:joinoftwoats} because $(a,b)$ is a tree ascent of $Q$. This cover relation is given by the {\str} $\rot{Q}{Z\vee Q}{(a.b)}$. Thus, the label sequence for the saturated chain $T\precdot Q \precdot Z\vee Q$ is $c,a$. 
    
Next, by \cref{lem:stopbeingascent}, $b=c$ and $a\in T^c_{s(c)-1}$ with $s(c)-1>0$. Then by \cref{lem:chaininsc-1pent} there is a saturated chain of the form $\rot{T}{Z}{(a,c)}\rot{}{P}{(a,d)}\rot{}{Z\vee Q}{(c,d)}.$
    
Thus, it remains to show that there are no other maximal chains in $[T,Z\vee Q]$ in this case. \cref{lem:inversionsadded} shows $Q\not \preceq P$. Thus, it suffices to show there are no other elements in $[T,Z\vee Q]$ besides $T,Z,Q,P,Z\vee Q$.
    
We note the only atoms in $[T,Z\vee Q]$ are $Z$ and $Q$ by \cref{lem:nootheratoms}. Then since $Q\precdot Z\vee Q$, the only other possibility of an element in $[T,Z\vee Q]$ besides the five listed above is that there is an atom of $[Z,Z\vee Q]$ distinct from $P$. Assume there is such an atom, $Z'$. Then by \cref{thm:covers} and  \cref{lem:inversionsadded}, there exists $(f,e)\in A_Z(a,d) \cup A_P(c,d)$ such that $(e,f)$ is a tree ascent of $Z$ with $\rot{Z}{Z'}{(e,f)}$. Now by \cref{lem:inversionsadded} and \cref{lem:nolowerascents}, the only pair $(f,e)\in A_Z(a,d)$ such that $(e,f)$ is a tree ascent of $Z$ is $(f,e)=(d,a)$. However, $(f,e)\neq (d,a)$ since $Z'\neq P$. Next we note that any $(f,e)\in A_P(c,d)$ has the form $(d,e)$ for some $e\in P^c\setminus 0$ by \cref{lem:inversionsadded}. We observe that by \cref{rmk:treepicture}, $P^c=Z^c\setminus s(c)$. Thus, any such any $e\in P^c\setminus 0$ with $e\neq c$ has $e\in Z^c_i$ with $i\neq s(c)$. Thus, $(e,d)$ does not satisfy (ii) of \cref{def:treeascent} of $(e,d)$ being a tree ascent of $Z$ because $e<c<d$. Thus, $(c,d)$ must be the tree ascent of $Z$ corresponding to $Z'$. However, this contradicts the hypothesis of the lemma that $(c,d)$ is not a tree ascent of $Z$. Hence, $P$ is the only atom of $[Z,Z\vee Q]$, and there are no other elements of $[T,Z\vee Q]$ besides the five listed earlier.
    
The two saturated chains have label sequences $c,a$ and $a,a,c$ which satisfy \cref{def:sblabeling}.
\end{proof}

\begin{lemma}\label{lem:pentaints0}
Let $T\precdot Z,Q$ be cover relations in {\swo} corresponding to $\rot{T}{Z}{(a,b)}$ and $\rot{T}{Q}{(c,d)}$ for tree ascents $(a,b)$ and $(c,d)$ of $T$ with $a<c$. Suppose $(a,b)$ is not a tree ascent of $Q$, but $(c,d)$ is a tree ascent of $Z$. Then $[T, Z\vee Q]$ has Hasse diagram which is a pentagon and the edge labeling of \cref{thm:sblabelingsection} on its two maximal chains satisfies \cref{def:sblabeling}.
\end{lemma}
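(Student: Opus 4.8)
The plan is to mirror the proof of \cref{lem:pentaintssc-1} with the roles of $Z$ and $Q$ interchanged, since here it is $Z$ (via the tree ascent $(c,d)$) rather than $Q$ that sits on the length-two side of the pentagon. First I would record the two sides of the interval. Because $(c,d)$ is a tree ascent of $Z$, \cref{lem:joinoftwoats} gives $\rot{Z}{Z\vee Q}{(c,d)}$, so $Z\precdot Z\vee Q$ and $T\precdot Z\precdot Z\vee Q$ is a saturated chain with label sequence $a,c$. For the long side, \cref{lem:stopbeingascent} applied to the hypothesis that $(a,b)$ is not a tree ascent of $Q$ yields $b=c$, $a\in T^c_0$, and $s(c)>0$; this is exactly the case treated in the second half of \cref{lem:addedinvsetcharact}, which furnishes the saturated chain $\rot{T}{Q}{(c,d)}\rot{}{P}{(a,d)}\rot{}{Z\vee Q}{(a,c)}$, i.e.\ $T\precdot Q\precdot P\precdot Z\vee Q$ with label sequence $c,a,a$.

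The core of the argument is then to show these are the only two maximal chains, equivalently that $[T,Z\vee Q]$ has exactly the five elements $T,Z,Q,P,Z\vee Q$. By \cref{lem:nootheratoms} the only atoms of $[T,Z\vee Q]$ are $Z$ and $Q$, and since $Z\precdot Z\vee Q$ the interval $[Z,Z\vee Q]$ contributes nothing beyond $Z$ and $Z\vee Q$. Hence any additional element would be an atom of $[Q,Z\vee Q]$ other than $P$, and the whole problem reduces to proving that $P$ is the only atom of $[Q,Z\vee Q]$.

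This last step is where I expect the real work to lie. I would identify, via \cref{lem:inversionsadded} applied along the long chain, the pairs that can be added from $Q$ up to $Z\vee Q$, namely those in $A_Q(a,d)\cup A_P(a,c)$; using $Q^a=T^a$ (as $a<c$, so the rotation $\rot{T}{Q}{(c,d)}$ does not disturb $T^a$ by \cref{rmk:treepicture}) together with $b=c$, these are $F_T(a,c)=\{(d,e):e\in T^a\setminus 0\}$ and $A_T(a,b)=\{(c,e):e\in T^a\setminus 0\}$, respectively. An atom of $[Q,Z\vee Q]$ corresponds to a tree ascent $(e,f)$ of $Q$ with $(f,e)$ in this union, so I would check each family: from $A_Q(a,d)$, \cref{lem:nolowerascents} (applied to the tree ascent $(a,d)$ of $Q$, valid since $s(a)>0$ whenever $T^a\setminus 0$ has an element below $a$) forces $(f,e)=(d,a)$, which gives exactly $P$; from $A_P(a,c)$, the candidate $(a,c)=(a,b)$ is ruled out because it is not a tree ascent of $Q$ by the lemma's hypothesis, while for $e<a$ the pair $(e,c)$ violates condition (ii) of \cref{def:treeascent}, since $e\in Q^a=T^a$ lies in some $T^a_j$ with $j<s(a)$ (as $T^a_{s(a)}$ is a leaf) while $e<a<c$. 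This leaves $P$ as the unique atom of $[Q,Z\vee Q]$.

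Finally I would confirm the pentagon shape by noting $Z\not\preceq P$ via \cref{lem:inversionsadded}, since $\inv{Z}-\inv{T}$ records a $(c,\cdot)$ inversion absent from $\inv{P}-\inv{T}$, which records only $(d,\cdot)$ inversions; thus $Z$ and $P$ are incomparable. Reading off the two label sequences $a,c$ and $c,a,a$, both use the labels $a$ and $c$ and no others, with $a\neq c$, so all three conditions of \cref{def:sblabeling} hold (condition (i) already being guaranteed by \cref{rmk:nosamebottomascents}). The main obstacle is the atom count for $[Q,Z\vee Q]$, and in particular invoking the lemma's hypothesis at precisely the right place to eliminate the candidate tree ascent $(a,c)=(a,b)$.
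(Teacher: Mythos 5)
Your proposal is correct and follows essentially the same route as the paper: the same two chains, the same reduction via \cref{lem:nootheratoms} to showing $P$ is the unique atom of $[Q,Z\vee Q]$, and the same use of \cref{lem:inversionsadded} and \cref{lem:nolowerascents} to eliminate the candidates in $A_Q(a,d)\cup A_P(a,c)$. The only (harmless) divergence is in disposing of the $A_P(a,c)$ candidates: you rule out $(a,c)$ by the lemma's hypothesis and the pairs $(e,c)$ with $e<a$ via condition (ii) of \cref{def:treeascent}, whereas the paper notes that no element of $T^a$ lies in $Q^c$ (since $a\in T^c_0$), so condition (i) already fails for all of them at once.
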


\begin{proof}
In this case, $Z\precdot Z\vee Q$ by \cref{lem:joinoftwoats}. This cover relation is given by the {\str} $\rot{Z}{Z\vee Q}{(c,d)}$. Thus, there is a saturated chain $T\precdot R\precdot Z\vee Q$ with label sequence $a,c$.
    
Since $(a,b)$ is not a tree ascent of $Q$, $b=c$ and $a \in T^c_0$ with $s(c)>1$ by \cref{lem:stopbeingascent}. Then by the proof of \cref{lem:addedinvsetcharact}, there is a saturated chain of the form \[\rot{T}{Q}{(c,d)}\rot{}{P}{(a,d)}\rot{}{Z\vee Q}{(a,c)}. \]

Thus, it remains to show these are the only saturated chains in the interval $[T,Z\vee Q]$. Again \cref{lem:inversionsadded} implies $Z\not \preceq P$. Hence, it suffices to show there are no other elements in $[T,Z\vee Q]$ besides $T,Z,Q,P,Z\vee Q$.
    
Again the only atoms in $[T,Z\vee Q]$ are $Z$ and $Q$ by \cref{lem:nootheratoms}. Since $Z\precdot Z\vee Q$, the only other possibility is that there is an atom $Q'$ in $[Q,Z\vee Q]$ distinct from $P$. Assume $Q'$ is such an atom. Then by \cref{thm:covers} and \cref{lem:inversionsadded}, there exists $(f,e) \in A_Q(a,d) \cup A_P(a,c)$ such that $(e,f)$ is a tree ascent of $Q$ and $\rot{Q}{Q'}{(e,f)}$. By \cref{lem:nolowerascents} and \cref{lem:inversionsadded}, the only pair $(f,e)\in A_Q(a,d)$ such that $(e,f)$ is a tree ascent of $Q$ is $(f,e)=(d,a)$. But $(f,e)\neq (d,a)$ since $Q'\neq P$. Next we note that any $(f,e)\in A_P(a,c)$, has the form $(c,e)$ for some $e\in P^a\setminus 0$. By \cref{rmk:treepicture}, $P^a\setminus 0 = Q^a\setminus 0 = T^a\setminus 0$ since $a\in T^c_0$ and $s(c)>1$. Also by \cref{rmk:treepicture}, no element of $T^a$ is in $Q^c$ since $a\in T^c_0$. Thus, for $e \in P^a\setminus 0$, $e\not \in Q^c$. Thus, no $(f,e)\in A_P(a,c)$ has $(e,f)$ a tree ascent of $Q$. Hence, $P$ is the only atom of $[Q,Z\vee Q]$.
    
Lastly, the label sequences for these two chains are $a,c$ and $c,a,a$ which satisfy \cref{def:sblabeling}.
\end{proof}

This brings us to the proof of our main theorem, namely that \cref{thm:sblabelingsection} is an SB-labeling of $s$-weak order. In the proof, we must consider the four cases for relationships between two tree ascents of an {\sdt} given in \cref{lem:stopbeingascent}. The result in the first three cases was proven in \cref{lem:diamondints}, \cref{lem:pentaintssc-1}, and \cref{lem:pentaints0}. The proof for the fourth case comes from combining the proofs of \cref{lem:pentaintssc-1} and \cref{lem:pentaints0}.

\begin{theorem}\label{thm:sblabeling}
Let $T\precdot Z$ be a cover relation in {\swo}. Let $\rot{T}{Z}{(a,b)}$ be the {\str} of $T$ along the unique tree ascent $(a,b)$ associated to $T\precdot Z$ by \cref{thm:covers}. Let $\lambda$ to be the edge labeling $\lambda(T, Z)= a$. Then $\lambda$ is an SB-labeling of {\swo}.
\end{theorem}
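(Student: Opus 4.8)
The plan is to verify the three conditions of \cref{def:sblabeling} for the labeling $\lambda$. Fix an element $T$ of {\swo} together with two distinct elements $Z$ and $Q$ each covering $T$. By \cref{thm:covers} these two cover relations correspond to distinct tree ascents of $T$, which by \cref{rmk:nosamebottomascents} we may write as $(a,b)$ and $(c,d)$ with $a<c$, where $\rot{T}{Z}{(a,b)}$ and $\rot{T}{Q}{(c,d)}$. Condition (i) is then immediate, since $\lambda(T,Z)=a \neq c = \lambda(T,Q)$. It remains to establish (ii) and (iii), namely that every saturated chain from $T$ to $Z\vee Q$ uses each of the labels $a$ and $c$ a positive number of times and uses no other labels.

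First I would apply \cref{lem:stopbeingascent} to divide into four cases, according to whether $(a,b)$ remains a tree ascent of $Q$ and whether $(c,d)$ remains a tree ascent of $Z$. Three of the four cases are settled verbatim by the preceding lemmas: \cref{lem:diamondints} handles the case where both pairs remain tree ascents, in which $[T,Z\vee Q]$ is a diamond with label sequences $a,c$ and $c,a$; \cref{lem:pentaintssc-1} handles the case where $(a,b)$ is a tree ascent of $Q$ but $(c,d)$ is not a tree ascent of $Z$, a pentagon with label sequences $c,a$ and $a,a,c$; and \cref{lem:pentaints0} handles the symmetric case, a pentagon with label sequences $a,c$ and $c,a,a$. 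In each of these lemmas it has already been shown that the two exhibited maximal chains are the only saturated chains of $[T,Z\vee Q]$ and that their label sequences involve only $a$ and $c$, each appearing, so (ii) and (iii) hold.

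The remaining case, which I expect to be the main obstacle, is the hexagonal case in which neither $(a,b)$ is a tree ascent of $Q$ nor $(c,d)$ is a tree ascent of $Z$. Here \cref{lem:stopbeingascent} forces $b=c$ together with both $a\in T^c_0$ and $a\in T^c_{s(c)-1}$; since $a$ lies below exactly one child of $c$, this is possible only when $s(c)=1$ and $a\in T^c_0$. I would then assemble the two maximal chains from constructions already in hand. Because $(c,d)$ is not a tree ascent of $Z$, \cref{lem:chaininsc-1pent} provides the saturated chain $\rot{T}{Z}{(a,b)}\rot{}{P}{(a,d)}\rot{}{Z\vee Q}{(c,d)}$ with label sequence $a,a,c$; because $(a,b)$ is not a tree ascent of $Q$, the chain built in the proof of \cref{lem:addedinvsetcharact} provides the saturated chain $\rot{T}{Q}{(c,d)}\rot{}{P'}{(a,d)}\rot{}{Z\vee Q}{(a,c)}$ with label sequence $c,a,a$. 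Both sequences use only the labels $a$ and $c$, each at least once, so (ii) and (iii) will follow once these are shown to be the only two maximal chains.

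To finish the hexagonal case I would rule out any further elements of $[T,Z\vee Q]$ by recombining the atom-counting arguments of the two pentagon lemmas. By \cref{lem:nootheratoms} the only atoms of $[T,Z\vee Q]$ are $Z$ and $Q$. The argument of \cref{lem:pentaintssc-1} (which only uses that $(c,d)$ is not a tree ascent of $Z$) shows $P$ is the unique atom of $[Z,Z\vee Q]$, and the argument of \cref{lem:pentaints0} (which only uses that $(a,b)$ is not a tree ascent of $Q$) shows $P'$ is the unique atom of $[Q,Z\vee Q]$; both rely on \cref{lem:inversionsadded} and \cref{lem:nolowerascents} to exclude spurious covers. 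Finally, \cref{lem:inversionsadded} gives $Q\not\preceq P$ and $Z\not\preceq P'$, so the intermediate vertices are distinct and the two chains meet only at $T$ and $Z\vee Q$; hence $[T,Z\vee Q]$ consists of exactly the six elements $T,Z,P,P',Q,Z\vee Q$ forming a hexagon. With the interval determined and both label sequences verified in every case, $\lambda$ satisfies all three conditions of \cref{def:sblabeling} and is therefore an SB-labeling of {\swo}.
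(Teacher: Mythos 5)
Your proposal is correct and follows essentially the same route as the paper: condition (i) via \cref{rmk:nosamebottomascents}, a four-way case split via \cref{lem:stopbeingascent}, the first three cases delegated to \cref{lem:diamondints}, \cref{lem:pentaintssc-1}, and \cref{lem:pentaints0}, and the hexagonal case ($b=c$, $s(c)=1$, $a\in T^c_0$) assembled by combining the two pentagon constructions and their atom-counting arguments. Your treatment of the hexagonal case is if anything slightly more explicit than the paper's about which chain comes from which lemma and why the two chains share no intermediate vertices, but it is the same argument.
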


\begin{proof}
Suppose $T\precdot Z,Q$ correspond to $\rot{T}{Z}{(a,b)}$ and $\rot{T}{Q}{(c,d)}$ for tree ascents of $(a,b)$ and $(c,d)$ of $T$ with $a<c$. By \cref{rmk:nosamebottomascents}, $\lambda$ satisfies property (i) of \cref{def:sblabeling}. To verify properties (ii) and (iii) of \cref{def:sblabeling}, there are four cases we must check:
\begin{enumerate}
    \item [(1)] $(a,b)$ is a tree ascent of $Q$ and $(c,d)$ is a tree ascent of $Z$, or
    \item [(2)] $(a,b)$ is a tree ascent of $Q$ while $(c,d)$ is not a tree ascent of $Z$, or
    \item [(3)] $(c,d)$ is a tree ascent of $Z$ while $(a,b)$ is not a tree acsent of $Q$, or
    \item [(4)] $(a,b)$ is not a tree ascent of $Q$ and $(c,d)$ is not a tree ascent of $Z$.
\end{enumerate}

Case (1) is \cref{lem:diamondints}. Case (2) is \cref{lem:pentaintssc-1}. Case (3) is \cref{lem:pentaints0}. Case (4) results in $[T,Z\vee Q]$ having Hasse diagram which is a hexagon and follows from \cref{lem:pentaintssc-1} and \cref{lem:pentaints0} and their proofs as we show now. 

In case (4), \cref{lem:stopbeingascent} implies $b=c$, but this time $a\in T^c_0$ and $s(c)=1$ so $a\in T^c_{s(c)-1}$. Then the proofs of \cref{lem:pentaintssc-1} and \cref{lem:pentaints0} imply that there are two distinct maximal chains in $[T,Z\vee Q]$. Both maximal chains are of length three and their label sequences are $a,a,c$ and $c,a,a$. Additionally, the proofs that there are no other maximal chains in the intervals in \cref{lem:pentaintssc-1} and \cref{lem:pentaints0} combine to show there are no other maximal chains in $[T,Z\vee Q]$. Thus, (ii) and (iii) of \cref{def:sblabeling} are satisfied. Therefore, $\lambda$ is an SB-labeling of {\swo}.
\end{proof}

Thus, we can characterize the homotopy types of open intervals in {\swo} and the M{\"o}bius function of {\swo} as follows.

\begin{corollary}\label{cor:homotopandmobius}
Let $T\preceq Z$ in {\swo}. Then $\Delta(T,Z)$, the order complex of the open interval $(T,Z)$, is homotopy equivalent to a ball or a sphere of some dimension. Moreover, the M{\"o}bius function of {\swo} satisfies $\mu(T,Z) \in \set{-1,0,1}$.
\end{corollary}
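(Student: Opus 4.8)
The plan is to deduce this corollary directly from our main result \cref{thm:sblabeling} together with the Hersh--M\'esz\'aros machinery recorded in \cref{thm:sbthm}. Since \cref{thm:sblabeling} establishes that $\lambda$ is an SB-labeling of the finite lattice {\swo}, I would simply invoke \cref{thm:sbthm} with $u=T$ and $v=Z$ to conclude that the order complex $\Delta(T,Z)$ of the open interval is homotopy equivalent to a ball or a sphere of some dimension. This gives the first assertion immediately, so no further work is needed for the homotopy statement.

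For the M\"obius function, I would use the standard identity $\mu(T,Z)=\tilde{\chi}(\Delta(T,Z))$ recorded in the background on posets, and then compute the reduced Euler characteristic in each of the two cases supplied by \cref{thm:sbthm}. If $\Delta(T,Z)$ is homotopy equivalent to a ball, then it is contractible, so $\tilde{\chi}(\Delta(T,Z))=0$ and hence $\mu(T,Z)=0$. If instead $\Delta(T,Z)$ is homotopy equivalent to a sphere, then by the sharp form of \cref{thm:sbthm} it is homotopy equivalent to $S^{d-2}$, where $d$ is the number of atoms of $[T,Z]$; since $\tilde{\chi}(S^{k})=(-1)^{k}$, we obtain $\mu(T,Z)=(-1)^{d-2}=(-1)^{d}\in\set{-1,1}$. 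Combining the two cases yields $\mu(T,Z)\in\set{-1,0,1}$.

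I would also record the boundary cases for completeness: if $T=Z$ then $\mu(T,Z)=1$ by convention, and if $T\precdot Z$ then the open interval is empty, so $\Delta(T,Z)$ is the void complex $S^{-1}$ with $\tilde{\chi}=-1$, which is consistent with the sphere case at $d=1$. There is essentially no obstacle here: the substantive content of the corollary lies entirely in \cref{thm:sblabeling}, and the present argument is merely the translation of homotopy type into reduced Euler characteristic and thence into M\"obius function values. The only point requiring a moment of care is tracking the parity $(-1)^{d}$ in the sphere case and confirming that the reduced Euler characteristic of a contractible complex vanishes.
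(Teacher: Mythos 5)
Your proposal is correct and follows exactly the paper's own argument: invoke \cref{thm:sbthm} together with \cref{thm:sblabeling} for the homotopy type, then translate via $\mu(T,Z)=\tilde{\chi}(\Delta(T,Z))$ using that a ball has reduced Euler characteristic $0$ and a sphere $S^{k}$ has $(-1)^{k}$. The extra attention to boundary cases is harmless but not needed beyond what the paper records.
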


\begin{proof}
The characterization of homotopy type follows from \cref{thm:sbthm} and \cref{thm:sblabeling}. The result on the M{\"o}bius function follows from the fact that $\mu(T,Z) = \tilde{\chi} (\Delta(T,Z))$ along with the fact that the reduced Euler characteristic of a ball is 0 and a $d$-sphere is $(-1)^d$. 
\end{proof}

Lastly, we give an intrinsic characterization of the intervals which are homotopy spheres and the dimension of those spheres.

\begin{lemma}\label{lem:lesssharpcharacspheres}
If $T \prec Z$ in {\swo}, then $Z$ is the join of the atoms in $[T,Z]$ if and only if \[ \inv{Z} = \left( \inv{T} + A_T(a_1,b_1) + \dots + A_T(a_l,b_l) \right)^{tc} \] where $(a_1,b_1), \dots, (a_l,b_l)$ are the tree ascents of $T$ such that $(b_i,a_i)\in \inv{Z}-\inv{T}$. Moreover, the number of atoms in the interval $[T,Z]$ is $l$ regardless of whether or not $Z$ is the join of atoms in the interval.
\end{lemma}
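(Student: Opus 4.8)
The plan is to first pin down the atoms of $[T,Z]$, which simultaneously yields the ``moreover'' clause, and then to compute the inversion set of their join and compare it with $\inv{Z}$.

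First I would identify the atoms. By \cref{thm:covers} the elements covering $T$ are in bijection with the tree ascents of $T$: a tree ascent $(a,b)$ produces $R$ with $\inv{R}=\tcp{\inv{T}}{(b,a)}$, and by \cref{lem:inversionsadded} this $R$ satisfies $\inv{R}-\inv{T}=A_T(a,b)=\sett{(b,e)}{e\in T^a\setminus 0}$, each such pair having multiplicity exactly one greater in $R$ than in $T$. Since the elements of $[T,Z]$ covering $T$ are precisely these $R$ with $R\preceq Z$, I would show $R\preceq Z$ holds exactly when $(b,a)\in\inv{Z}-\inv{T}$. The forward direction is immediate, as $R\preceq Z$ forces $\card{Z}{b}{a}\geq\card{R}{b}{a}=\card{T}{b}{a}+1$. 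The converse is the delicate step and the main obstacle: assuming $\card{Z}{b}{a}>\card{T}{b}{a}$ I must verify $\card{Z}{b}{e}\geq\card{T}{b}{e}+1$ for every $e\in T^a\setminus 0$, as these are the only pairs on which $\inv{R}$ exceeds $\inv{T}$. For $e\neq a$ one has $s(a)>0$ and $\card{T}{a}{e}>0$, hence $\card{Z}{a}{e}\geq\card{T}{a}{e}>0$ since $\inv{T}\subseteq\inv{Z}$; then transitivity of the $s$-tree inversion set $\inv{Z}$ (\cref{prop:invchartrees}) applied to $e<a<b$ gives $\card{Z}{b}{e}\geq\card{Z}{b}{a}\geq\card{T}{b}{a}+1=\card{T}{b}{e}+1$, the final equality by \cref{rmk:biggercontain}. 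As distinct tree ascents yield distinct atoms, the atoms of $[T,Z]$ are exactly the $R_1,\dots,R_l$ attached to $(a_1,b_1),\dots,(a_l,b_l)$, proving the count is $l$.

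Next I would compute the join of these atoms. Iterating \cref{thm:latticejoin} (and the closure identity $(\!A^{tc}\cup B)^{tc}=(A\cup B)^{tc}$) gives $\inv{R_1\vee\cdots\vee R_l}=\left(\bigcup_i\inv{R_i}\right)^{tc}$, so it suffices to identify the multiset $\bigcup_i\inv{R_i}$ with $\inv{T}+A_T(a_1,b_1)+\dots+A_T(a_l,b_l)$. The key input is \cref{lem:nocommoninvsadded}, which makes the sets $A_T(a_i,b_i)$ pairwise disjoint, so any pair $(f,e)$ lies in at most one of them. Then both the union (a maximum of multiplicities over $\inv{T}$ and the $\inv{R_i}$) and the sum (a capped total of multiplicities) assign $(f,e)$ the value $\card{T}{f}{e}+1$ when $(f,e)$ lies in some $A_T(a_i,b_i)$ and $\card{T}{f}{e}$ otherwise; crucially the cap $s(f)$ in the sum never activates, since $\card{T}{f}{e}+1=\card{R_i}{f}{e}\leq s(f)$ for the unique $i$ with $(f,e)\in A_T(a_i,b_i)$. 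Hence union and sum agree, giving $\inv{R_1\vee\cdots\vee R_l}=\left(\inv{T}+A_T(a_1,b_1)+\dots+A_T(a_l,b_l)\right)^{tc}$.

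Finally I would conclude: $Z$ is the join of the atoms of $[T,Z]$ if and only if $Z=R_1\vee\cdots\vee R_l$, and since $T\mapsto\inv{T}$ is a bijection by \cref{prop:invchartrees}, this is equivalent to $\inv{Z}=\inv{R_1\vee\cdots\vee R_l}$, i.e. to the displayed identity in the lemma. Everything beyond the converse direction of the atom characterization is multiplicity bookkeeping made possible by the disjointness lemma, so that transitivity argument is where the real work lies.
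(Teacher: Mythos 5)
Your proposal is correct and follows essentially the same route as the paper: identify the atoms via \cref{thm:covers}, then compute $\inv{T_1\vee\dots\vee T_l}$ as $\bigl(\bigcup_i\inv{T_i}\bigr)^{tc}$ and use \cref{lem:inversionsadded} together with the pairwise disjointness from \cref{lem:nocommoninvsadded} to identify the union of the $\inv{T_i}$ with the capped sum $\inv{T}+A_T(a_1,b_1)+\dots+A_T(a_l,b_l)$. The only difference is that you spell out the converse of the atom characterization (that $(b,a)\in\inv{Z}-\inv{T}$ forces $R\preceq Z$, via transitivity of $\inv{Z}$ and \cref{rmk:biggercontain}), a step the paper dismisses as following from the characterization of cover relations; your argument for it is valid.
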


\begin{proof}
Let $T\preceq Z$ in {\swo}. The number of atoms in $[T,Z]$ follows from the characterization of cover relations in {\swo}.

Let $(a_1,b_1), \dots, (a_l,b_l)$ be all the tree ascents of $T$ contained in $\inv{Z}-\inv{T}$. Let $T_1,\dots,T_l$ be the corresponding atoms of $[T,Z]$, respectively. Then to prove the characterization of the join of atoms, it suffices to show $\inv{\bigvee_{i=1}^l T_i} = \left( \inv{T} + A_T(a_1,b_1) + \dots + A_T(a_l,b_l) \right)^{tc}$. We note that by induction, $\inv{\bigvee_{i=1}^l T_i} = \left(\inv{T_1}\cup \dots \cup \inv{T_l} \right)^{tc}$. Now by \cref{lem:inversionsadded}, $\inv{T_i} = \inv{T} + A_T(a_i,b_i)$. By \cref{lem:nocommoninvsadded}, the sets $A_T(a_i,b_i)$ are pairwise disjoint. Thus, $$\inv{T} + A_T(a_1,b_1) + \dots + A_T(a_l,b_l) \subset \inv{T_1}\cup \dots \cup \inv{T_l}$$ so $$\left( \inv{T} + A_T(a_1,b_1) + \dots + A_T(a_l,b_l) \right)^{tc} \subset \inv{\bigvee_{i=1}^l T_i}.$$ On the other hand, $\inv{T} + A_T(a_i,b_i) \subset  \inv{T} + A_T(a_1,b_1) + \dots + A_T(a_l,b_l)$ for each $i\in [l]$ so $\inv{T_i} \subset \left( \inv{T} + A_T(a_1,b_1) + \dots + A_T(a_l,b_l) \right)^{tc}$ for each $i\in [l]$. Thus, $\inv{\bigvee_{i=1}^l T_i} \subset \left( \inv{T} + A_T(a_1,b_1) + \dots + A_T(a_l,b_l) \right)^{tc}$ which gives the result.
\end{proof}

\cref{lem:lesssharpcharacspheres} combined with \cref{thm:sbthm} implies the following intrinsic description of intervals which are homotopy spheres and the dimensions of those spheres.

\begin{theorem}\label{thm:countthespheres}
If $T \prec Z$, then $\Delta (T,Z)$ is homotopy equivalent to a sphere if and only if \[\inv{Z} = \left(\inv{T} + A_T(a_1,b_1) + \dots + A_T(a_l,b_l) \right)^{tc}\] where $(a_1,b_1), \dots, (a_l,b_l)$ are the tree ascents of $T$ such that $(b_i,a_i)\in \inv{Z}-\inv{T}$. Moreover, in this case the dimension of the sphere is $l-2$.
\end{theorem}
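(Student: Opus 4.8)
The plan is to assemble this statement from two facts already in hand: the existence of the SB-labeling and the intrinsic inversion-set criterion for being a join of atoms. First I would invoke \cref{thm:sblabeling}, which establishes that the labeling $\lambda$ of \cref{thm:sblabelingsection} is an SB-labeling of $s$-weak order. With an SB-labeling in place, \cref{thm:sbthm} of Hersh and M\'esz\'aros applies verbatim to every open interval $(T,Z)$: it asserts that $\Delta(T,Z)$ is homotopy equivalent to a sphere if and only if $Z$ is the join of the atoms of the closed interval $[T,Z]$, and that in that case the sphere has dimension $d-2$, where $d$ is the number of atoms of $[T,Z]$.

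The remaining task is purely combinatorial: to convert the lattice-theoretic condition ``$Z$ is the join of the atoms of $[T,Z]$'' into the stated equation of multi-inversion sets. This conversion is exactly \cref{lem:lesssharpcharacspheres}, which I would cite directly. That lemma shows $Z$ equals the join of the atoms of $[T,Z]$ precisely when $\inv{Z} = (\inv{T} + A_T(a_1,b_1) + \dots + A_T(a_l,b_l))^{tc}$, where $(a_1,b_1),\dots,(a_l,b_l)$ are the tree ascents of $T$ with $(b_i,a_i)\in \inv{Z}-\inv{T}$; it also records that the number of atoms of $[T,Z]$ equals $l$, the number of such tree ascents.

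Combining the two gives the theorem: $\Delta(T,Z)$ is a homotopy sphere if and only if the displayed inversion-set equation holds, and when it does the number of atoms $d$ equals $l$, so the dimension supplied by \cref{thm:sbthm} is $d-2=l-2$. There is no substantive obstacle left, since the hard work lies in the already-completed \cref{thm:sblabeling} and \cref{lem:lesssharpcharacspheres}; the present result is their formal consequence. The one point deserving a line of care is the identification of the ``number of atoms'' $d$ appearing in the dimension formula of \cref{thm:sbthm} with the index $l$ of the tree ascents, and this identification is precisely the second assertion of \cref{lem:lesssharpcharacspheres}.
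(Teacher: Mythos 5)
Your proposal is correct and follows exactly the paper's route: the paper derives \cref{thm:countthespheres} by combining \cref{thm:sbthm} (applicable via the SB-labeling of \cref{thm:sblabeling}) with \cref{lem:lesssharpcharacspheres}, including the identification of the atom count $d$ with $l$. There is nothing to add or correct.
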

\end{section}

\begin{section}{An SB-labeling of the $s$-Tamari lattice}\label{sec:stam}

In this section, we prove that a quite similar edge labeling of the $s$-Tamari Lattice is an SB-labeling. The notation and notions we need to work with the {\staml} are defined in \cref{sec:bkgdstam} and are quite similar to those for {\swo}. We use a subscript of $Tam$ to differentiate between {\swo} and the {\staml}, for instance $\precdot_{Tam}$ instead of $\precdot$ for cover relations. For the join however, we still use $\vee$ as in {\swo} because the {\staml} is a sublattice of {\swo}. We follow a quite similar structure of lemmas as in the proof for {\swo}. The proofs are quite similar to the case of {\swo} with the only major difference being that $[T,Z\vee Q]_{Tam}$ for any $T\precdot_{Tam}Z,Q$ have Hasse diagrams which are only diamonds or pentagons. Further, there is only one way that pentagonal intervals arise. There are also some minor differences in the details we must check, but these details are usually simpler than in the case of {\swo} because Tamari tree ascents are always a pair of a parent and child as defined just after \cref{thm:stamarilattice}. Because of the similarities, the proofs presented here are more cursory.

Intuitively, we label cover relations in the {\staml} by the label of the root vertex of the subtree that is moved to obtain the cover relation, that is we label by the smaller element of the Tamari tree ascent associated to the cover relation by \cref{thm:stamaricovers}, just as in {\swo}.

\begin{definition}\label{def:sblabelingtamari}
Let $T\precdot_{Tam} Z$ be a cover relation in the {\staml}. Let $\trot{T}{Z}{(a,b)}$ be the $s$-Tamari rotation of $T$ along the Tamari tree ascent $(a,b)$ of $T$ associated to $T\precdot_{Tam} Z$ by \cref{thm:stamaricovers}. Define $\lambda$ be the edge labeling $\lambda(T, Z)= a$.
\end{definition}

For $T\precdot_{Tam}Z,Q$, we prove that $[T,Z\vee Q]_{Tam}$ has Hasse diagram which is either a diamond or a pentagon, and that the labeling on the two maximal chains satisfies \cref{def:sblabeling} in either case. In the $s$-Tamari lattice, there is only one type of pentagonal interval instead of two. Similarly to {\swo}, our first proposition restricts the Tamari tree ascents which can occur in an {\stt}. We use it to characterize when $[T,z\vee Q]_{Tam}$ has Hasse diagram which is a diamond or which is a pentagon, as well as to describe the atoms in such intervals.
 
\begin{proposition}\label{lem:nolowertamascents}
Let $T$ be an {\stt} and let $1\leq a<b \leq n$ be such that $(a,b)$ is a Tamari tree ascent of $T$. Then no pair of the form $(c,b)$ such that $c\in T^a$ and $c<a$ is a Tamari tree ascent of $T$. 
\end{proposition}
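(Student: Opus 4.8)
The plan is to exploit the fact that, unlike the tree ascents of \cref{def:treeascent}, a Tamari tree ascent is simply a parent--child pair subject to a cardinality bound; this makes the statement nearly immediate and sidesteps the more delicate argument needed for the analogous \cref{lem:nolowerascents} in {\swo}. First I would recall that, by the definition of a Tamari tree ascent given just after \cref{thm:stamarilattice}, for $(c,b)$ to be a Tamari tree ascent of $T$ the vertex $c$ must be a \emph{direct child} of $b$ (with $\card{T}{b}{c}<s(b)$). Thus it suffices to show that $c$ cannot be a child of $b$.

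To see this, note that since $(a,b)$ is a Tamari tree ascent, $a$ is a child of $b$, i.e. $b$ is the parent of $a$; in particular $a\in T^b$, and since $b>a$ while all labels of $T^a$ are at most $a$, we have $b\notin T^a$. By hypothesis $c\in T^a$ with $c<a$, and because $a$ is the root of $T^a$ this forces $c$ to be a \emph{proper} descendant of $a$. Hence the parent of $c$ lies in $T^a$ (it is either $a$ itself or an internal vertex of $T^a$ strictly below $a$). Since $b\notin T^a$, the parent of $c$ cannot be $b$, so $c$ is not a direct child of $b$ and therefore $(c,b)$ fails the defining condition of a Tamari tree ascent.

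The only point that warrants care is the phrase ``$c\in T^a$ and $c<a$'': because $T$ is $s$-decreasing, every proper descendant of $a$ already has label smaller than $a$, so the hypothesis $c<a$ is really just recording that $c\neq a$ and hence that $c$ is a genuine proper descendant of $a$ --- which is exactly what keeps the parent of $c$ inside $T^a$. I expect no serious obstacle here; the argument is a direct consequence of the single-parent observation already invoked in \cref{rmk:nosamebottomtamascents}, and is considerably shorter than its {\swo} counterpart precisely because condition (iii) of \cref{def:treeascent} plays no role for Tamari tree ascents.
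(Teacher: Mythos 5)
Your proof is correct and follows essentially the same route as the paper: the paper's own two-sentence argument is exactly the observation that $a$ is a child of $b$ while every $c\in T^a$ with $c<a$ is a proper descendant of $a$ and hence not a child of $b$. Your write-up merely spells out the single-parent reasoning in more detail.
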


\begin{proof}
Since $(a,b)$ is a Tamari tree ascent of $T$, $a$ is a child of $b$ in $T$. No other $c\in T^a$ is a child of $b$ in $T$.
\end{proof}

Just as in the {\swo} case, the next two definitions let us describe $\inv{Z\vee Q}$ when $T\precdot{Tam}Z,Q$. The subsequent proposition explicitly computes the tree inversions added by an s-Tamari rotation along a Tamari tree ascent.

\begin{definition}\label{def:taminvsaddedset}
Let $T$ be a {\stt} and let $1\leq a<b \leq n$ be such that $(a,b)$ is a Tamari tree ascent of $T$. Let $Z$ be the {\stt} obtained by $\trot{T}{Z}{(a,b)}$. Define \textbf{the set of inversions added by the $s$-Tamari rotation along $(a,b)$}, denoted $\boldsymbol{A^{Tam}_T(a,b)}$, by \[A^{Tam}_T(a,b) =\sett{(f,e)}{\card{Z}{f}{e}>\card{T}{f}{e}}. \]
\end{definition}

\begin{definition}\label{def:terttaminvsadded}
Let $T$ be an {\stt} and let $(a,b)$ and $(c,d)$ be Tamari tree ascents of $T$ with $a<c$. We note that $b$ and $d$ are determined by $a$ and $c$ since they are the parents of $a$ and $c$, respectively. Define the following set valued function: \[F^{Tam}_T(a,c) =\begin{cases} \sett{(d,e)}{e\in T^a\setminus 0} & \text{ if } b=c \text{ and }a\in T^c_0 \\
\\
\emptyset & \text{otherwise}
\end{cases} \]
\end{definition}

\begin{proposition}\label{lem:taminversionsadded}
Let $T$ be an {\stt} and let $1\leq a<b \leq n$ be such that $(a,b)$ is a Tamari tree ascent of $T$. Suppose $\trot{T}{Z}{(a,b)}$. Then $(f,e) \in A^{Tam}_T(a,b)$ if and only if $f=b$ and $e\in T^a\setminus 0$ in which case $$\#_Z(f,e)=\#_T(f,e) + 1.$$
\end{proposition}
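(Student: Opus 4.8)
The plan is to follow the same strategy as the proof of \cref{lem:inversionsadded}, reading off the few subtrees that an $s$-Tamari rotation actually moves and then tracking how the cardinalities of \cref{def:treeinversions} change. By \cref{rmk:stamaritreepic} (see \cref{fig:gentamarirotab}), writing $a\in T^b_j$ with $j<s(b)$, the rotation $\trot{T}{Z}{(a,b)}$ detaches the subtree $T^a\setminus 0$, leaves $T^a_0$ behind as the new $j$th child subtree $Z^b_j$ of $b$, and reinserts $T^a\setminus 0$ as the new $0$th child of $m$, the smallest labeled vertex of ${}_LT^b_{j+1}$. Everything outside $T^b$ is untouched, and within $T^b$ only the vertices of $T^a\setminus 0$ and of $T^a_0$ move. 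The one point of difference from \cref{lem:inversionsadded} is that here there is no analogue of condition (iii) of a tree ascent, so the entire subtree $T^a\setminus 0$, including its rightmost descendants, moves rigidly; in its place I would use the defining property of an {\stt} from \cref{def:stamaritree}, namely that every labeled vertex of $T^b_{j+1}$ exceeds every labeled vertex of $T^b_j$.

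First I would compute the claimed change. For $e\in T^a\setminus 0$ we have $e\in T^b_j$, so $\card{T}{b}{e}=j$ by \cref{rmk:biggercontain}, while after the rotation $e$ lies inside $Z^m_0\subseteq Z^b_{j+1}$, and since $j+1\le s(b)$ this gives $\card{Z}{b}{e}=j+1$. Hence $\card{Z}{b}{e}=\card{T}{b}{e}+1$, which is exactly the asserted equality with $f=b$.

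Then I would verify that no other cardinality changes, splitting into cases according to where the two vertices of a pair sit. For a pair internal to $T^a\setminus 0$, or internal to $T^a_0$, the relevant subtree is reattached rigidly, so relative positions, and hence cardinalities, are preserved; for a pair with one vertex in $T^a_0$ and one in $T^a\setminus 0$ the two stay in the same left-to-right order (with $T^a_0$ to the left), so that cardinality remains $0$. For a pair with exactly one vertex $e\in T^a\setminus 0$ and the other vertex $f\ne b$, I would argue via \cref{rmk:biggercontain}: when $f>b$ one reduces the cardinality of $(f,e)$ to that of $(f,b)$, whose value is unchanged because the position of $b$ relative to $f$ is untouched; otherwise one checks directly that $e$ stays to the left of (or in the $0$th subtree of) the relevant vertex. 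The symmetric bookkeeping handles a vertex $e\in T^a_0$ against an outside $f$. Since $T^b_j=T^a$, these cases exhaust all pairs, giving the converse inclusion, so the only added pairs are the $(b,e)$ with $e\in T^a\setminus 0$.

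The step I expect to be the main obstacle is this last case analysis, specifically confirming that inserting $T^a\setminus 0$ beneath $m$ does not disturb its left/right relationship with the other vertices of $T^b_{j+1}$. This is exactly where the $s$-Tamari hypothesis of \cref{def:stamaritree} is essential: because $m$ lies on the left spine ${}_LT^b_{j+1}$ and all of $T^a\setminus 0$ has labels below $m$, the moved subtree stays leftmost within $T^b_{j+1}$, so every vertex of $T^b_{j+1}$ still sees each $e\in T^a\setminus 0$ either in its $0$th subtree or to its left, keeping all those cardinalities equal to $0$ before and after the rotation.
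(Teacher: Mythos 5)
Your proposal is correct and takes essentially the same approach as the paper: the paper's proof is a one-line appeal to \cref{rmk:stamaritreepic}, tracking the only subtrees that change under an $s$-Tamari rotation, and your write-up simply carries out that bookkeeping in full, including the correct observation that the $s$-Tamari condition of \cref{def:stamaritree} replaces condition (iii) of \cref{def:treeascent} in guaranteeing that the reinserted subtree $T^a\setminus 0$ stays leftmost in $T^b_{j+1}$.
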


\begin{proof}
This follows from \cref{rmk:stamaritreepic} by keeping track of the only subtrees that change in an $s$-Tamari rotation.
\end{proof}

Again as in the {\swo} case, we use the following lemma in one of two different characterizations of $Z\vee Q$ for $T\precdot_{Tam}Z,Q$.   

\begin{lemma}\label{lem:nocommontaminvsadded}
Let $T$ be an {\stt}. Let $1\leq a<b \leq n$ and $1\leq c<d \leq n$ be such that $(a,b)$ and $(c,d)$ are Tamari tree ascents of $T$ with $a<c$. Then $A^{Tam}_T(a,b)$, $A^{Tam}_T(c,d)$, and $F^{Tam}_T(a,c)$ are pairwise disjoint.
\end{lemma}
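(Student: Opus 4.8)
The plan is to establish the three pairwise disjointness statements by direct inspection, exploiting the fact that a Tamari tree ascent $(a,b)$ forces $a$ to be a child of $b$. This parent-child structure makes the relevant subtree containments clean and lets me avoid the more delicate case analysis that was needed for the \swo{} analogue in \cref{lem:nocommoninvsadded}.

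First I would dispose of $A^{Tam}_T(a,b)\cap A^{Tam}_T(c,d)$. By \cref{lem:taminversionsadded}, every pair of $A^{Tam}_T(a,b)$ has first coordinate $b$ and every pair of $A^{Tam}_T(c,d)$ has first coordinate $d$, so the two sets are automatically disjoint whenever $b\neq d$. When $b=d$, both $a$ and $c$ are children of $b$, and they are distinct since $a<c$; hence $T^a$ and $T^c$ are rooted at distinct children of $b$ and share no labeled vertices. As the second coordinates of the two sets lie in $T^a\setminus 0$ and $T^c\setminus 0$ respectively, no pair can lie in both.

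Next I would handle $F^{Tam}_T(a,c)$. If $F^{Tam}_T(a,c)=\emptyset$ there is nothing to check, so by \cref{def:terttaminvsadded} I may assume $b=c$ and $a\in T^c_0$, in which case $F^{Tam}_T(a,c)=\{(d,e) : e\in T^a\setminus 0\}$. Disjointness from $A^{Tam}_T(a,b)$ is immediate, since the pairs of $F^{Tam}_T(a,c)$ have first coordinate $d$ while those of $A^{Tam}_T(a,b)$ have first coordinate $b=c$, and $c<d$ because $c$ is a child of $d$. For disjointness from $A^{Tam}_T(c,d)$, both sets share the first coordinate $d$, so I must compare second coordinates: those of $F^{Tam}_T(a,c)$ lie in $T^a\setminus 0\subseteq T^c_0$, whereas those of $A^{Tam}_T(c,d)$ lie in $T^c\setminus 0$. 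The key observation is that $T^c\setminus 0$ is obtained from $T^c$ by replacing $T^c_0$ with a leaf, so its labeled vertices are disjoint from the labeled vertices of $T^c_0$; in particular no $e\in T^a\setminus 0$ can lie in $T^c\setminus 0$, giving the desired disjointness.

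The only step warranting care — and the sole place I expect any friction — is justifying the containment $T^a\subseteq T^c_0$ together with the claim that the labeled vertices of $T^c_0$ and $T^c\setminus 0$ are disjoint. Both follow from unwinding the subtree notation of \cref{sec:bkgdswo}, using that $a\in T^c_0$ forces the entire subtree rooted at $a$ to sit inside $T^c_0$ and that $a\neq c$. Everything else is routine bookkeeping from the explicit description of added inversions in \cref{lem:taminversionsadded}.
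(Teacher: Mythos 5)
Your argument is correct and follows essentially the same route as the paper's proof: both split on whether $b=d$ for the first pair of sets (using the explicit description of added inversions and the fact that $a$ and $c$ are distinct children when $b=d$, so $T^a$ and $T^c$ are disjoint), and both handle $F^{Tam}_T(a,c)$ by noting $b=c\neq d$ and that $T^a\setminus 0\subseteq T^c_0$ is disjoint from $T^c\setminus 0$. The extra detail you supply on unwinding the subtree notation is consistent with what the paper leaves implicit.
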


\begin{proof}
Assume seeking contradiction that $A^{Tam}_T(a,b) \cap A^{Tam}_T(c,d) \neq \emptyset$. Then by \cref{lem:taminversionsadded}, $b=d$. Then $a\in T^b_i$ and $c\in T^b_j$ with $i\neq j$ since $a$ and $c$ are distinct children of $d$. Thus, $T^a$ and $T^c$ are disjoint. However, the intersection being non-empty then contradicts \cref{lem:taminversionsadded}. 

If $F^{Tam}_T(a,c)\neq \emptyset$, then $b=c$ and $a\in T^c_0$ by \cref{def:terttaminvsadded}. Thus, $F^{Tam}_T(a,c)$ is disjoint from $A^{Tam}_T(a,b)$ since $b\neq d$. $F^{Tam}_T(a,c)$ is also disjoint from $A^{Tam}_T(c,d)$ by \cref{lem:taminversionsadded} because every $e\in T^a\setminus 0$ is in $T^c_0$ since $a\in T^c_0$.
\end{proof}

In the following lemma, we show the first of two descriptions of $Z\vee Q$ for $T\precdot_{Tam} Z,Q$. The second description of $Z\vee Q$ is \cref{lem:tamaddedinvsetcharact} below. Our proof of \cref{lem:tamjoinoftwoats} is nearly identical to the proof of \cref{lem:joinoftwoats} since the {\staml} is a sublattice of {\swo}.

\begin{lemma}\label{lem:tamjoinoftwoats}
Let $T$ be an {\stt} and let $1\leq a<b \leq n$ and $1\leq c<d \leq n$ be such that $(a,b)$ and $(c,d)$ are distinct Tamari tree ascents of $T$. Suppose $\trot{T}{Z}{(a,b)}$ and $\trot{T}{Q}{(c,d)}$, then $\inv{Z\vee Q} = \tcp{{\tcp{\inv{T}}{(b,a)}}}{(d,c)}$.
\end{lemma}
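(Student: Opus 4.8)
The plan is to mirror the proof of \cref{lem:joinoftwoats} almost verbatim, replacing each ingredient by its $s$-Tamari analogue. The first point is that, because the $s$-Tamari lattice is a sublattice of {\swo} by \cref{thm:stamarilattice}, the join $Z\vee Q$ taken inside the $s$-Tamari lattice coincides with the join taken in {\swo}. Hence \cref{thm:latticejoin} applies and gives $\inv{Z\vee Q}=(\inv{Z}\cup\inv{Q})^{tc}$. Setting $I=\inv{Z}\cup\inv{Q}$ and $W=\tcp{{\tcp{\inv{T}}{(b,a)}}}{(d,c)}$, the goal reduces to the identity $W=I^{tc}$, which I would prove by the two inclusions $\tcp{\inv{T}}{(b,a)}+(d,c)\subseteq I$ and $\inv{Z},\inv{Q}\subseteq W$, exactly as in the {\swo} case.

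For the first inclusion I would recall from \cref{thm:stamaricovers} that $\inv{Z}=\tcp{\inv{T}}{(b,a)}$ and $\inv{Q}=\tcp{\inv{T}}{(d,c)}$, so that $\tcp{\inv{T}}{(b,a)}+(d,c)=\inv{Z}+(d,c)$. The only coordinate that could fail the inclusion $\inv{Z}+(d,c)\subseteq I$ is $(d,c)$ itself. Here \cref{lem:taminversionsadded} gives $(d,c)\in A^{Tam}_T(c,d)$ with $\card{Q}{d}{c}=\card{T}{d}{c}+1$, while \cref{lem:nocommontaminvsadded} forces $(d,c)\notin A^{Tam}_T(a,b)$ and hence $\card{Z}{d}{c}=\card{T}{d}{c}$. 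Taking the coordinatewise maximum yields $\card{I}{d}{c}=\card{T}{d}{c}+1$; since $(c,d)$ is a Tamari tree ascent we have $\card{T}{d}{c}<s(d)$, so adjoining $(d,c)$ to $\inv{Z}$ is not capped and produces exactly multiplicity $\card{T}{d}{c}+1$. As all other coordinates of $\inv{Z}+(d,c)$ agree with those of $\inv{Z}\subseteq I$, the inclusion follows, and taking transitive closures gives $W\subseteq I^{tc}$.

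For the second inclusion I would argue purely formally. On one hand $\inv{Z}=\tcp{\inv{T}}{(b,a)}\subseteq \tcp{\inv{T}}{(b,a)}+(d,c)\subseteq W$. On the other hand $\inv{Q}=\tcp{\inv{T}}{(d,c)}$ is the transitive closure of $\inv{T}+(d,c)$, and $\inv{T}+(d,c)\subseteq \tcp{\inv{T}}{(b,a)}+(d,c)$ since $\inv{T}\subseteq\tcp{\inv{T}}{(b,a)}$; applying the transitive closure then gives $\inv{Q}\subseteq W$. Therefore $I=\inv{Z}\cup\inv{Q}\subseteq W$, and since $W$ is transitive, $I^{tc}\subseteq W$. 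Combining with the first inclusion gives $W=I^{tc}=\inv{Z\vee Q}$, as claimed.

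Since the argument is a transcription of the proof of \cref{lem:joinoftwoats}, there is no deep obstacle; the one genuinely Tamari-specific step is the verification $\card{Z}{d}{c}=\card{T}{d}{c}$, i.e.\ that the $s$-Tamari rotation along $(a,b)$ does not create the inversion $(d,c)$. This rests entirely on the disjointness in \cref{lem:nocommontaminvsadded}, whose proof exploits that Tamari tree ascents are parent--child pairs, so that the subtrees $T^a$ and $T^c$ are disjoint when $a$ and $c$ share a parent. Everything else is routine and identical to the {\swo} setting.
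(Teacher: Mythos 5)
Your proposal is correct and matches the paper's approach exactly: the paper's own proof of \cref{lem:tamjoinoftwoats} simply observes that the {\staml} is a sublattice of {\swo} (so the joins coincide) and then says to repeat the proof of \cref{lem:joinoftwoats} with \cref{lem:inversionsadded} and \cref{lem:nocommoninvsadded} replaced by \cref{lem:taminversionsadded} and \cref{lem:nocommontaminvsadded}. You have carried out that transcription faithfully, including the one Tamari-specific verification that $\card{Z}{d}{c}=\card{T}{d}{c}$ via the disjointness lemma.
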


\begin{proof}
Since the $s$-Tamari lattice is a sublattice of {\swo}, $Z\vee Q$ is the same {\sdt} in the {\staml} as in {\swo}. Thus, this proof is the same as the proof of \cref{lem:joinoftwoats}, but with \cref{lem:inversionsadded} and \cref{lem:nocommoninvsadded} replaced by \cref{lem:taminversionsadded} and \cref{lem:nocommontaminvsadded}, respectively.
\end{proof}

In the next lemma, we begin with $s$-Tamari Tree $T$ with distinct Tamari tree ascents $(a,b)$ and $(c,d)$ with $a<c$. We show $(c,d)$ is always a Tamari tree ascent of the $s$-Tamari rotation of $T$ along $(a,b)$. We also show that the only way that $(a,b)$ ceases to be a Tamari tree ascent of the $s$-Tamari rotation of $T$ along $(c,d)$ is if $b=c$ and $a$ is the $0$th child of $c$ in $T$. In contrast with the four possibilities we say in \cref{lem:stopbeingascent} for {\swo}, there are only two possibilities in the {\staml}. These turn out to characterize which {\staml} intervals have Hasse diagrams that are diamonds and that are pentagons. The proof is simpler than that of \cref{lem:stopbeingascent} because Tamari tree ascents are pairs of a parent and child.

\begin{lemma}\label{lem:tamstopbeingascent}
Let $T$ be a {\stt}. Let $1\leq a<b \leq n$ and $1\leq c<d \leq n$ be such that $(a,b)$ and $(c,d)$ are Tamari tree ascents of $T$ with $a<c$. Let $\trot{T}{Z}{(a,b)}$ and $\trot{T}{Q}{(c,d)}$. If $(a,b)$ is not a Tamari tree ascent of $Q$, then $b=c$ and $a$ is the $0$th child of $c$. Moreover, $(c,d)$ is a Tamari tree ascent of $Z$.
\end{lemma}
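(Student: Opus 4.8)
The plan is to analyze the two Tamari tree ascents $(a,b)$ and $(c,d)$ with $a<c$ by recalling that, by the definition following \cref{thm:stamarilattice}, $a$ is a non-rightmost child of $b$ and $c$ is a non-rightmost child of $d$. Since $a<c$, the $s$-Tamari rotation $\trot{T}{Z}{(a,b)}$ along $(a,b)$ moves only the subtree $T^a$ (together with its right descendants, per \cref{rmk:stamaritreepic}) and touches no vertex larger than $a$; in particular it does not move $c$, $d$, nor disturb the parent-child relationship of $c$ and $d$. I would make this the first observation, exactly paralleling the ``first observation'' in the proof of \cref{lem:stopbeingascent}, and conclude immediately that $c$ remains a non-rightmost child of $d$ in $Z$, so $(c,d)$ is a Tamari tree ascent of $Z$. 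This is the ``Moreover'' clause and should be quick.

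For the main claim, I would suppose that $(a,b)$ fails to be a Tamari tree ascent of $Q$, where $\trot{T}{Q}{(c,d)}$. Being a Tamari tree ascent means precisely that $a$ is a non-rightmost child of $b$; so the failure in $Q$ must be that either $a$ is no longer a child of $b$ in $Q$, or $a$ has become the rightmost child of $b$ in $Q$. I would rule out the second possibility using \cref{lem:taminversionsadded}: the rotation along $(c,d)$ only increases cardinalities of the form $(d,e)$ for $e\in T^c\setminus 0$, so if $a$ were to become the rightmost child of $b$ this would require $\card{Q}{b}{a}=s(b)$ with $\card{T}{b}{a}<s(b)$, forcing $(b,a)\in A^{Tam}_T(c,d)$, i.e. $b=d$ and $a\in T^c\setminus 0$; but $a<c$ contradicts $a\in T^c\setminus 0$. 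Hence the only way $(a,b)$ fails is that $a$ stops being a child of $b$ in $Q$.

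Next I would use the explicit tree description in \cref{rmk:stamaritreepic} to see which child relationships the rotation along $(c,d)$ destroys. That rotation detaches $T^c$ from its parent $d$ and reattaches $c$ below the smallest labeled vertex $m$ of ${_LT^d_{j+1}}$; the only parent-child bond broken is the one between $c$ and $d$, and the child that replaces $c$ as the $j$th subtree of $d$ is $T^c_0$. Therefore $a$ can lose $b$ as its parent only if $a$ was a child of $b=d$ that sits below $c$, i.e. $a\in T^c_0$ and $a$ is a child of $d$; combined with $a$ being a child of $b$ this yields $b=c=d$-style analysis. More carefully, $a$ ceases to be a child of $b$ precisely when $b=c$ and $a$ is the $0$th child of $c$ (so that in $Q$ the vertex $a$ is carried away inside $T^c$, with $c$ moved elsewhere). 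I would conclude $b=c$ and $a\in T^c_0$, which is the stated conclusion.

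The main obstacle I anticipate is being careful about the degenerate overlap cases and making the ``$a$ stops being a child of $b$ iff $b=c$ and $a\in T^c_0$'' step airtight, since it is the one genuinely geometric step; everything else follows the template of \cref{lem:stopbeingascent} but is strictly easier because Tamari tree ascents are parent-child pairs, so conditions (ii) and (iii) of \cref{def:treeascent} do not enter and I need only track literal child relationships via \cref{rmk:stamaritreepic} and \cref{lem:taminversionsadded}. I would present the argument in two short paragraphs: first the ``Moreover'' clause, then the case analysis establishing $b=c$ and $a\in T^c_0$.
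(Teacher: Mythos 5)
Your overall structure matches the paper's proof: the same first observation (the rotation along $(a,b)$ moves nothing above $a$) gives the ``Moreover'' clause, and the same two-case split --- either $a$ becomes the rightmost child of $b$ in $Q$, or $a$ stops being a child of $b$ in $Q$ --- handles the main claim, with the second case yielding $b=c$ and $a\in T^c_0$ via \cref{rmk:stamaritreepic}. However, your justification for ruling out the first case is wrong. After correctly deducing from \cref{lem:taminversionsadded} that $\card{Q}{b}{a}=s(b)$ with $\card{T}{b}{a}<s(b)$ would force $b=d$ and $a\in T^c\setminus 0$, you claim that ``$a<c$ contradicts $a\in T^c\setminus 0$.'' It does not: in an $s$-decreasing tree every labeled vertex of $T^c$ other than $c$ itself has label \emph{less} than $c$, so $a<c$ is exactly what membership in $T^c\setminus 0$ requires, not a contradiction of it. The actual contradiction is that $a\in T^c$ with $a<c$ makes $a$ a strict descendant of $c$, hence not a child of $d=b$, contradicting that $(a,b)$ is a Tamari tree ascent of $T$; this is precisely \cref{lem:nolowertamascents}, which is what the paper invokes at this point. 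With that substitution the case is correctly eliminated.

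A smaller issue: in the second case your account of which parent--child bonds the rotation along $(c,d)$ destroys is internally inconsistent. You first assert that ``the only parent-child bond broken is the one between $c$ and $d$,'' which, since $a<c$, would make it impossible for $a$ to lose its parent at all; in fact the rotation breaks two bonds, namely $c$--$d$ and the bond between $c$ and the root of $T^c_0$ (which gets reattached as a child of $d$ in place of $T^c$). Your ``more carefully'' sentence then states the correct criterion ($b=c$ and $a$ the $0$th child of $c$), so the conclusion stands, but the derivation should be rewritten so that it is visibly this second broken bond that produces the pentagon case.
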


\begin{proof}
By \cref{rmk:stamaritreepic}, the $s$-Tamari rotation along $(a,b)$ changes nothing above $c$ in $T$. Thus, $c$ is still a non-right most child of $d$ in $Z$ so $(c,d)$ is a Tamari tree ascent of $Z$. Because $a<c$, there are only two ways that $(a,b)$ might not be a Tamari tree ascent of $Q$. Either (1) $a\in Q^b_{s(b)}$ or (2) $a$ is not a child of $b$ in $Q$. 

For (1), we note that $a\in T^b_{j}$ for some $j<s(b)$ since $(a,b)$ is a Tamari tree ascent of $T$. Then by \cref{lem:taminversionsadded}, $a\in Q^b_{s(b)}$ implies $b=d$ and $a\in T^c$. Then, however, since $a<c$, $(a,d)$ being a Tamari tree ascent of $T$ contradicts \cref{lem:nolowertamascents}. Thus, (1) cannot occur. For (2), \cref{rmk:stamaritreepic} implies $a$ is a child of $b$ in $T$, but not a child of $b$ in $Q$ if and only if $b=c$ and $a$ is the $0$th child of $b$ in $T$. This is precisely the conclusion of this lemma.
\end{proof}

Next we give a second description of $Z\vee Q$ for $T\precdot_{Tam}Z.Q$, this time in terms of explicit multi-inversion sets instead of the transitive closure. The first description of $Z\vee Q$ we \cref{lem:tamjoinoftwoats} above. We use the same main idea as in the proof of \cref{lem:addedinvsetcharact} for {\swo} and construct the chain of length three that occurs in the intervals $[T,Z\vee Q]_{Tam}$ which have Hasse diagrams that are pentagons. 

\begin{lemma}\label{lem:tamaddedinvsetcharact}
Let $T$ be an {\stt}. Let $1\leq a<b \leq n$ and $1\leq c<d \leq n$ be such that $(a,b)$ and $(c,d)$ are Tamari tree ascents of $T$ with $a<c$. Suppose $\trot{T}{Z}{(a,b)}$ and $\trot{T}{Q}{(c,d)}$. Then $\inv{Z\vee Q} - \inv{T} = A^{Tam}_T(a,b)\cup A^{Tam}_T(c,d)\cup F_T^{Tam}(a,c) $.
\end{lemma}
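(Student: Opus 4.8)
The plan is to follow the proof of \cref{lem:addedinvsetcharact} almost verbatim, taking advantage of the simplification provided by \cref{lem:tamstopbeingascent}: whereas $s$-weak order admitted four ways a tree ascent could be destroyed by a rotation along a second ascent, here there is exactly one, namely $b=c$ with $a$ the $0$th child of $c$. Accordingly I would split into two cases according to whether $(a,b)$ remains a Tamari tree ascent of $Q$, and throughout replace \cref{lem:inversionsadded} and \cref{lem:nocommoninvsadded} by their Tamari analogues \cref{lem:taminversionsadded} and \cref{lem:nocommontaminvsadded}.

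In the first case, $(a,b)$ is a Tamari tree ascent of $Q$. By \cref{lem:tamstopbeingascent} we are then not in the situation $b=c$ and $a\in T^c_0$, so $F^{Tam}_T(a,c)=\emptyset$ by \cref{def:terttaminvsadded}, and it suffices to prove $\inv{Z\vee Q}-\inv{T} = A^{Tam}_T(a,b)\cup A^{Tam}_T(c,d)$. By \cref{lem:tamjoinoftwoats} the rotation $\trot{Q}{Z\vee Q}{(a,b)}$ is valid, so \cref{lem:taminversionsadded} gives $\inv{Z\vee Q}-\inv{T}=A^{Tam}_T(c,d)\cup A^{Tam}_Q(a,b)$. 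It then remains to check $A^{Tam}_Q(a,b)=A^{Tam}_T(a,b)$, which by \cref{lem:taminversionsadded} reduces to $Q^a\setminus 0 = T^a\setminus 0$; this holds because $a<c$ (so $a\neq c$) forces the $s$-Tamari rotation along $(c,d)$, described in \cref{rmk:stamaritreepic}, to relocate the subtree $T^a$ only as an intact unit, whence $Q^a=T^a$.

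In the second case $(a,b)$ is not a Tamari tree ascent of $Q$, so $b=c$, $a\in T^c_0$, and $(c,d)$ is a Tamari tree ascent of $Z$ by \cref{lem:tamstopbeingascent}. I would first establish the inclusion $\supseteq$: the sets $A^{Tam}_T(a,b)$ and $A^{Tam}_T(c,d)$ lie in $\inv{Z\vee Q}-\inv{T}$ because $Z,Q\preceq_{Tam}Z\vee Q$ together with \cref{lem:taminversionsadded} and \cref{lem:nocommontaminvsadded}, while each $(d,e)\in F^{Tam}_T(a,c)$ lies there by a transitivity argument, using that every $e\in T^a\setminus 0$ satisfies $e\in T^c_0$ (as $a$ is the $0$th child of $c$), that $\card{Z\vee Q}{d}{c}\ge\card{T}{d}{c}+1$, and that $\card{T}{d}{e}=\card{T}{d}{c}$ by \cref{rmk:biggercontain}. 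For the reverse inclusion I would exhibit the saturated chain $\trot{T}{Q}{(c,d)}\trot{}{P}{(a,d)}\trot{}{Z\vee Q}{(a,c)}$, paralleling the one built in the proof of \cref{lem:addedinvsetcharact} (cf.\ \cref{fig:sc0pentalongforproof}). Verifying this chain is where the real work lies: one checks that $(a,d)$ is a Tamari tree ascent of $Q$ (after the first rotation $T^c_0=T^a$ becomes a non-rightmost child subtree of $d$, since $a$ is the $0$th child of $c$ and $c$ was non-rightmost under $d$) and that $(a,c)$ is a Tamari tree ascent of the resulting tree $P$; then \cref{lem:taminversionsadded} computes $\inv{Z\vee Q}-\inv{T}=A^{Tam}_T(c,d)\cup A^{Tam}_Q(a,d)\cup A^{Tam}_P(a,c)$, and tracking subtree movements via \cref{rmk:stamaritreepic} identifies $A^{Tam}_Q(a,d)=F^{Tam}_T(a,c)$ (as $Q^a=T^a$) and $A^{Tam}_P(a,c)=A^{Tam}_T(a,b)$ (as $b=c$ and $P^a\setminus 0=T^a\setminus 0$). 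Since the join is the least upper bound, its inversion set realizes the smallest difference with $\inv{T}$, forcing equality.

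The main obstacle is this chain construction in the second case, specifically identifying the intermediate vertex serving as the new parent after each rotation and confirming that the relevant pair is a genuine Tamari tree ascent, i.e.\ a non-rightmost parent--child pair. This is noticeably easier than in \cref{lem:addedinvsetcharact}, since Tamari tree ascents are by definition parent--child pairs: there are no analogues of conditions (ii) and (iii) of \cref{def:treeascent} to re-verify, and one need only track which vertex is the parent after each $s$-Tamari rotation using \cref{rmk:stamaritreepic}.
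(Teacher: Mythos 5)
Your proposal matches the paper's proof essentially step for step: the paper also splits on whether $(a,b)$ survives as a Tamari tree ascent of $Q$, handles the first case by transporting the argument of \cref{lem:addedinvsetcharact} with \cref{lem:taminversionsadded} and \cref{lem:nocommontaminvsadded} in place of their $s$-weak order counterparts, and in the second case proves the containment $\supseteq$ by the same transitivity argument and then exhibits the same saturated chain $\trot{T}{Q}{(c,d)}\trot{}{P}{(a,d)}\trot{}{Z\vee Q}{(a,c)}$ with the same identifications $A^{Tam}_Q(a,d)=F^{Tam}_T(a,c)$ and $A^{Tam}_P(a,c)=A^{Tam}_T(a,b)$, concluding by minimality of the join. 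Your closing remark that verifying the chain is easier here because Tamari tree ascents are parent--child pairs (so only \cref{rmk:stamaritreepic} needs to be tracked) is precisely the simplification the paper exploits.
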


\begin{proof}
If $(a,b)$ is a Tamari tree ascent of $Q$, then a similar argument to that in the proof of \cref{lem:addedinvsetcharact}, but with the corresponding lemmas for $s$-Tamari trees shows that the result holds.

If $(a,b)$ is not a Tamari tree ascent of $Q$, then $b=c$ and $a$ is the $0th$ child of $c$ by \cref{lem:tamstopbeingascent}. A similar argument to that in the proof of \cref{lem:addedinvsetcharact} using transitivity shows that $A^{Tam}_T(a,b)\cup A^{Tam}_T(c,d)\cup F_T^{Tam}(a,c) \subseteq \inv{Z\vee Q} -\inv{T}$. Thus, it suffices two show there is an {\stt} $P'$ with $\inv{P'} -\inv{T} = A^{Tam}_T(a,b)\cup A^{Tam}_T(c,d)\cup F_T^{Tam}(a,c)$. We claim there is a saturated chain \[\trot{T}{Q}{(c,d)}\trot{}{P}{(a,d)} \trot{}{P'}{(a,c)}. \] 

Since $a$ is the $0th$ child of $c$, $a$ is the $0th$ child of $d$ in $Q$ by \cref{rmk:stamaritreepic}. Thus, $(a,d)$ is a tree ascent of $Q$. Then, again by \cref{rmk:stamaritreepic}, $a$ is the $0th$ child of $c$ in $P$. Hence, $(a,c)$ is a Tamari tree ascent of $P$. Thus, we have the claimed saturated chain. Now we apply \cref{lem:taminversionsadded} at each step of the chain which gives $\inv{P'} -\inv{T} = A^{Tam}_T(c,d)\cup A^{Tam}_Q(a,d)\cup A_P^{Tam}(a,c)$. Now by \cref{rmk:stamaritreepic} we have $A^{Tam}_Q(a,d)= F_T(a,c)$ and $A_P^{Tam}(a,c)=A_T^{Tam}(a,c)$. Thus, $\inv{P'} -\inv{T} = A^{Tam}_T(c,d)\cup A^{Tam}_T(a,c)\cup F_T^{Tam}(a,c)$ and these sets are pairwise disjoint by \cref{lem:nocommontaminvsadded}.
\end{proof}

In the next lemma, we show that the only atoms in $[T,Z\vee Q]_{Tam}$ with $T\precdot_{Tam}Z,Q$ are $Z$ and $Q$ using \cref{lem:tamaddedinvsetcharact}.

\begin{lemma}\label{lem:tamnootheratoms}
Let $T$ be an {\sdt}. Let $1\leq a<b \leq n$ and $1\leq c<d \leq n$ be such that $(a,b)$ and $(c,d)$ are Tamari tree ascents of $T$ with $a<c$. Suppose $\trot{T}{Z}{(a,b)}$ and $\trot{T}{Q}{(c,d)}$, then $Z$ and $Q$ are the only atoms in $[T,Z\vee Q]_{Tam}$.
\end{lemma}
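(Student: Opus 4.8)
The plan is to mirror the proof of \cref{lem:nootheratoms} for $s$-weak order, substituting each ingredient by its $s$-Tamari counterpart. By \cref{thm:stamaricovers}, the atoms of $[T,Z\vee Q]_{Tam}$ are precisely the $s$-Tamari rotations of $T$ along those Tamari tree ascents $(e,f)$ of $T$ for which $(f,e)\in \inv{Z\vee Q}-\inv{T}$. So first I would invoke \cref{lem:tamaddedinvsetcharact} to rewrite this multi-inversion set difference as $A^{Tam}_T(a,b)\cup A^{Tam}_T(c,d)\cup F^{Tam}_T(a,c)$, reducing the task to checking, for each pair $(f,e)$ in this union, whether $(e,f)$ is a Tamari tree ascent of $T$, and confirming that only the pairs corresponding to $Z$ and $Q$ survive.

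For the pairs coming from $A^{Tam}_T(a,b)$ and $A^{Tam}_T(c,d)$, I would apply \cref{lem:taminversionsadded}: every element of $A^{Tam}_T(a,b)$ has the form $(b,e)$ with $e\in T^a\setminus 0$, and likewise every element of $A^{Tam}_T(c,d)$ has the form $(d,e)$ with $e\in T^c\setminus 0$. Since all labeled vertices of $T^a$ other than $a$ lie strictly below $a$ and hence are not children of $b$, \cref{lem:nolowertamascents} rules out $(e,b)$ as a Tamari tree ascent for every such $e\neq a$; the same argument with $(c,d)$ rules out $(e,d)$ for every $e\in T^c$ with $e<c$. Thus these two sets contribute only $(b,a)$ and $(d,c)$, which are exactly the pairs producing $Z$ and $Q$.

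The remaining case, $F^{Tam}_T(a,c)$, is the step that requires the most care. When it is nonempty, \cref{def:terttaminvsadded} forces $b=c$ and $a\in T^c_0$, and each pair has the form $(d,e)$ with $e\in T^a\setminus 0$. Here I would observe that every such $e$ satisfies $e\in T^a\subseteq T^c$ together with $e\leq a<c$, so $e<c$; hence $(e,d)$ is immediately excluded as a Tamari tree ascent by \cref{lem:nolowertamascents} applied to the Tamari tree ascent $(c,d)$. Combining the three cases, no pair of $\inv{Z\vee Q}-\inv{T}$ other than $(b,a)$ and $(d,c)$ corresponds to a Tamari tree ascent of $T$, so $Z$ and $Q$ are the only atoms.

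I expect the $F^{Tam}_T(a,c)$ case to be the only genuine subtlety; the rest is a direct transcription of the $s$-weak order argument, in fact streamlined by the fact that Tamari tree ascents are always parent--child pairs. This makes \cref{lem:nolowertamascents} applicable at once, in place of the more delicate condition-(ii) analysis of \cref{def:treeascent} that was needed to dispose of the $F_T(a,c)$ pairs in the proof of \cref{lem:nootheratoms}.
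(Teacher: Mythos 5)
Your proposal is correct and follows essentially the same route as the paper's proof: reduce to the Tamari tree ascents $(e,f)$ of $T$ with $(f,e)\in\inv{Z\vee Q}-\inv{T}$, apply \cref{lem:tamaddedinvsetcharact} to identify this difference with $A^{Tam}_T(a,b)\cup A^{Tam}_T(c,d)\cup F^{Tam}_T(a,c)$, and use \cref{lem:taminversionsadded} together with \cref{lem:nolowertamascents} to rule out every pair except $(b,a)$ and $(d,c)$. Your handling of the $F^{Tam}_T(a,c)$ pairs (noting $e\in T^a\subseteq T^c$ with $e<c$ so that \cref{lem:nolowertamascents} applies to the ascent $(c,d)$) is exactly the paper's argument, just spelled out in more detail.
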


\begin{proof}
Assume $T'\in [T,Z\vee Q]_{Tam}$ and $T\precdot_{Tam} T'$ with $T'\neq Z,Q$. Let $(e,f)$ be the Tamari tree ascent of $T$ corresponding to $T'$. By \cref{lem:tamaddedinvsetcharact}, $(f,e)\in A^{Tam}_T(a,b)\cup A^{Tam}_T(c,d)\cup F_T^{Tam}(a,c)$. $(e,f)\neq (a,b),(c,d)$ since $T'\neq Z,Q$. Any other pair $(f,e)\in A^{Tam}_T(a,b)\cup A^{Tam}_T(c,d)\cup F_T^{Tam}(a,c)$ being a Tamari tree ascent of $T$ contradicts \cref{lem:nolowertamascents} because either $f=b$ or $f=d$ and $e$ is below $a$ or $c$ in $T$ and so cannot be a child of $f$.
\end{proof}

In the subsequent two lemmas, we show the {\staml} intervals of the form $[T,Z\vee Q]_{Tam}$ where $T\precdot_{Tam}Z,Q$ have Hasse diagrams that are either diamonds or pentagons and that the labeling of \cref{def:sblabelingtamari} satisfies the definition of SB-labeling. These two lemmas combine to prove our labeling is an SB-labeling of the {\staml}.

\begin{lemma}\label{lem:tamdiamondints}
Let $T\precdot_{Tam} Z,Q$ be cover relations in the {\staml} corresponding to $\trot{T}{Z}{(a,b)}$ and $\trot{T}{Q}{(c,d)}$ for distinct Tamari tree ascents of $(a,b)$ and $(c,d)$ of $T$. Suppose $(a,b)$ is a Tamari tree ascent of $Q$. Then $[T, Z\vee Q]_{Tam}$ has Hasse diagram which is a diamond and the edge labeling of \cref{def:sblabelingtamari} on its two maximal chains satisfies \cref{def:sblabeling}.
\end{lemma}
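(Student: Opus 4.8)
The plan is to mirror the proof of \cref{lem:diamondints} for {\swo}, substituting the $s$-Tamari analogue of each ingredient, since the structure of the argument is identical and the $s$-Tamari lattice is a sublattice of {\swo}. First I would invoke \cref{lem:tamjoinoftwoats} to write $\inv{Z\vee Q} = \tcp{\inv{Z}}{(d,c)} = \tcp{\inv{Q}}{(b,a)}$. The hypothesis that $(a,b)$ is a Tamari tree ascent of $Q$ then yields $\trot{Q}{Z\vee Q}{(a,b)}$ directly, so $Q\precdot_{Tam} Z\vee Q$. For the other side, I would appeal to the ``moreover'' clause of \cref{lem:tamstopbeingascent}, which guarantees that $(c,d)$ is \emph{always} a Tamari tree ascent of $Z$; hence $\trot{Z}{Z\vee Q}{(c,d)}$ and $Z\precdot_{Tam} Z\vee Q$. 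This produces two distinct saturated chains $T\precdot_{Tam} Z\precdot_{Tam} Z\vee Q$ and $T\precdot_{Tam} Q\precdot_{Tam} Z\vee Q$, each of length two.

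Next I would argue these are the only two maximal chains. Since both exhibited chains have length two and each passes through an atom of $[T,Z\vee Q]_{Tam}$ directly into the top element, it suffices to rule out a third atom. This is exactly the content of \cref{lem:tamnootheratoms}, which shows $Z$ and $Q$ are the only atoms of the interval. Because any maximal chain must begin at an atom, and each of $Z, Q$ is already covered by $Z\vee Q$, there is no room for a longer chain on either side; thus the Hasse diagram is forced to be a diamond with top $Z \vee Q$, bottom $T$, and the two middle elements $Z$ and $Q$.

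Finally, to verify the SB-labeling conditions of \cref{def:sblabeling}, I would read off the label sequences under the labeling of \cref{def:sblabelingtamari}: the chain through $Z$ carries labels $a, c$ and the chain through $Q$ carries labels $c, a$. Both labels $a$ and $c$ appear a positive number of times on each chain and no other labels occur, so conditions (ii) and (iii) hold; condition (i) is immediate from $a \neq c$, which follows since $(a,b)$ and $(c,d)$ are distinct Tamari tree ascents with $a < c$ (using \cref{rmk:nosamebottomtamascents}).

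I do not expect a serious obstacle here, as this case is the tamest of the possibilities. The only point requiring care is confirming that the hypothesis together with \cref{lem:tamstopbeingascent} genuinely places us in the situation where \emph{both} $Z$ and $Q$ are covered by $Z \vee Q$, rather than one of them sitting strictly below it through a longer chain. This is handled cleanly by the asymmetry in \cref{lem:tamstopbeingascent}: since $(c,d)$ is always a Tamari tree ascent of $Z$, the only failure that could lengthen a chain (and produce a pentagon) would be $(a,b)$ ceasing to be a Tamari tree ascent of $Q$, which is precisely what the hypothesis of this lemma excludes.
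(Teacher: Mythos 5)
Your proposal is correct and follows essentially the same route as the paper: apply \cref{lem:tamjoinoftwoats} to get $Z,Q\precdot_{Tam} Z\vee Q$, invoke \cref{lem:tamnootheratoms} to rule out a third atom (and hence any further elements or longer chains), and read off the label sequences $a,c$ and $c,a$. Your explicit appeal to the ``moreover'' clause of \cref{lem:tamstopbeingascent} to justify that $(c,d)$ remains a Tamari tree ascent of $Z$ is a point the paper leaves implicit, and it is exactly the right justification.
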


\begin{proof}
Similarly to the corresponding proof in {\swo}, we use \cref{lem:tamjoinoftwoats} to show $\trot{Z}{Z\vee Q}{(c,d)}$ and $\trot{Q}{Z\vee Q}{(a,b)}$. Hence, $Z,Q\precdot_{Tam} Z\vee Q$ Thus, $T\precdot_{Tam} Z \precdot_{Tam} Z\vee Q$ and $T \precdot_{Tam} Q \precdot_{Tam} R\vee Q$ are two distinct saturated chains from $T$ to $Z\vee Q$. To show there is not a third such saturated chain it suffices to show there is not a third atom in the interval $[T,Z\vee Q]_{Tam}$, but this is \cref{lem:tamnootheratoms}. Hence, the above chains are the only two saturated chains from $T$ to $Z\vee Q$.
        
Now we only need observe that the label sequences of the saturated chains $T\precdot_{Tam} Z \precdot_{Tam} Z\vee Q$ and $T \precdot_{Tam} Q \precdot_{Tam} Z\vee Q$ are $a,c$ and $c,a$, respectively. Therefore, \cref{def:sblabeling} is satisfied.
\end{proof}

\begin{lemma}\label{lem:tampentaints}
Let $T\precdot_{Tam} Z,Q$ be cover relations in the {\staml} corresponding to $\trot{T}{Z}{(a,b)}$ and $\trot{T}{Q}{(c,d)}$ for Tamari tree ascents $(a,b)$ and $(c,d)$ of $T$ with $a<c$. Suppose $(a,b)$ is not a Tamari tree ascent of $Q$. Then $[T, Z\vee Q]_{Tam}$ has Hasse diagram which is a pentagon and the edge labeling of \cref{def:sblabelingtamari} on its two maximal chains satisfies \cref{def:sblabeling}.
\end{lemma}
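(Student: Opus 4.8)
The plan is to transcribe the argument of \cref{lem:pentaints0} into the $s$-Tamari setting, where this is the \emph{only} way a pentagonal interval can arise. First I would apply \cref{lem:tamstopbeingascent}: since $(a,b)$ is assumed not to be a Tamari tree ascent of $Q$, we obtain $b=c$, that $a$ is the $0$th child of $c$ in $T$, and that $(c,d)$ is still a Tamari tree ascent of $Z$. Combining this last fact with \cref{lem:tamjoinoftwoats} yields $\trot{Z}{Z\vee Q}{(c,d)}$, so $Z\precdot_{Tam}Z\vee Q$ and we have the short saturated chain $T\precdot_{Tam}Z\precdot_{Tam}Z\vee Q$ with label sequence $a,c$. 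For the long side, the saturated chain $\trot{T}{Q}{(c,d)}\trot{}{P}{(a,d)}\trot{}{Z\vee Q}{(a,c)}$ was already produced inside the proof of \cref{lem:tamaddedinvsetcharact}; it carries the label sequence $c,a,a$. Thus both maximal chains, and their labels, come essentially for free.

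The substance of the proof, and the main obstacle, is showing that these are the only two maximal chains, i.e. that $[T,Z\vee Q]_{Tam}$ consists of exactly $T,Z,Q,P,Z\vee Q$. By \cref{lem:tamnootheratoms} the only atoms of $[T,Z\vee Q]_{Tam}$ are $Z$ and $Q$, and because $Z\precdot_{Tam}Z\vee Q$ the only possible extra element is an atom $Q'\neq P$ of $[Q,Z\vee Q]_{Tam}$. I would suppose such a $Q'$ exists with $\trot{Q}{Q'}{(e,f)}$ and use \cref{thm:stamaricovers} together with the identity $\inv{Z\vee Q}-\inv{Q}=A^{Tam}_Q(a,d)\cup A^{Tam}_P(a,c)$ obtained by applying \cref{lem:taminversionsadded} along the long chain, forcing $(f,e)$ into this set. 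Pairs in $A^{Tam}_Q(a,d)$ have the form $(d,e)$ with $e\in Q^a\setminus 0$, and \cref{lem:nolowertamascents} makes $(d,a)$ the only admissible one, which is exactly the ascent already producing $P$. Pairs in $A^{Tam}_P(a,c)$ have the form $(c,e)$ with $e\in P^a\setminus 0$; here I would invoke \cref{rmk:stamaritreepic} to get $P^a\setminus 0=Q^a\setminus 0=T^a\setminus 0$ and, crucially, that since $a\in T^c_0$ the $s$-Tamari rotation along $(c,d)$ detaches the subtree $T^c_0$ rooted at $a$ from $c$, placing it under $d$ while $c$ is relocated under $m$. Consequently no $e\in T^a$ is a child of $c$ in $Q$, so no $(e,c)$ is a Tamari tree ascent of $Q$. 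This rules out $Q'$ and shows $P$ is the unique atom of $[Q,Z\vee Q]_{Tam}$.

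Finally I would note $P\neq Z$: since $Z$ and $Q$ are distinct atoms of $[T,Z\vee Q]_{Tam}$, neither covers the other, so the covering $Q\precdot_{Tam}P$ forces $P\neq Z$ (equivalently $Z\not\preceq_{Tam}P$ by \cref{lem:taminversionsadded}). Hence the two chains are genuinely distinct and the Hasse diagram is a pentagon. Condition (i) of \cref{def:sblabeling} holds by \cref{rmk:nosamebottomtamascents}, while the label sequences $a,c$ and $c,a,a$ each use precisely the two atom labels $a=\lambda(T,Z)$ and $c=\lambda(T,Q)$ and no others, giving conditions (ii) and (iii). I expect the detachment argument via \cref{rmk:stamaritreepic} to be the one place needing genuine care; the rest is a direct, and slightly simpler, copy of the $s$-weak order pentagon argument, the simplification coming from the fact that Tamari tree ascents are always parent–child pairs.
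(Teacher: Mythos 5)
Your proposal is correct and follows essentially the same route as the paper: \cref{lem:tamstopbeingascent} and \cref{lem:tamjoinoftwoats} give the short chain, the proof of \cref{lem:tamaddedinvsetcharact} gives the long chain, and \cref{lem:tamnootheratoms} plus the analysis of $A^{Tam}_Q(a,d)\cup A^{Tam}_P(a,c)$ via \cref{lem:taminversionsadded}, \cref{lem:nolowertamascents}, and \cref{rmk:stamaritreepic} rules out any further atom of $[Q,Z\vee Q]_{Tam}$. The only difference is that you spell out the details of that last step, which the paper handles by pointing back to the proof of \cref{lem:pentaints0}; your elaboration matches that argument.
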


\begin{proof}
By \cref{lem:tamstopbeingascent}, $b=c$ and $a$ is the $0$th child of $c$. Again by \cref{lem:tamjoinoftwoats}, we have the saturated chain $T\precdot_{Tam} Z \precdot_{Tam} Z\vee Q$ given by the $s$-Tamari rotations $\trot{T}{Z}{(a,b)}$ and $\trot{Z}{Z\vee Q}{(c,d)}$. By the proof of \cref{lem:tamaddedinvsetcharact}, we have a saturated chain \[\trot{T}{Q}{(c,d)}\trot{}{P}{(a,d)} \trot{}{Z\vee Q}{(a,c)}. \] We note that by \cref{lem:taminversionsadded} $Z\preceq_{Tam} P$. Thus, to show the Hasse diagram of $[T, Z\vee Q]_{Tam}$ is a pentagon, it suffices to show there are no other elements in the interval besides $T,Z,Q,P,Z\vee Q$. To show there are no other elements in the interval, it suffices to show there are no other atoms in $[T,Z\vee Q]_{Tam}$ besides $Z$ and $Q$ and that there are no other atoms in $[Q,Z\vee Q]_{Tam}$ besides $P$. The fact that there are no atoms of $[T,Z\vee Q]_{Tam}$ besides $Z$ and $Q$ is \cref{lem:tamnootheratoms}. Similarly to the proof of \cref{lem:pentaints0} for {\swo}, \cref{lem:tamaddedinvsetcharact} implies the existence of an atom in $[Q,Z\vee Q]_{Tam}$ besides $P$ would contradict \cref{lem:nolowertamascents}. Hence, the Hasse diagram  of the interval is a pentagon whose only maximal chains are the two already shown. 

The label sequences for the maximal chains $T\precdot_{Tam} Z \precdot_{Tam} R\vee Q$ and $T\precdot_{Tam} Q \precdot_{Tam} P \precdot_{Tam} Z\vee Q$ are $a,c$ and $c,a,a$, respectively. These label sequences satisfy \cref{def:sblabeling}. 
\end{proof}

The previous two lemmas together prove the labeling of \cref{def:sblabelingtamari} is an SB-labeling.

\begin{theorem}\label{thm:tamsblabelthm}
Let $T\precdot_{Tam} Z$ be a cover relation in the {\staml}. Let $\trot{T}{Z}{(a,b)}$ be the $s$-Tamari rotation of $T$ along the Tamari tree ascent $(a,b)$ of $T$ associated to $T\precdot_{Tam} Z$ by \cref{thm:stamaricovers}. Let $\lambda$ be the edge labeling $\lambda(T, Z)= a$. Then $\lambda$ is an SB-labeling of the {\staml}.
\end{theorem}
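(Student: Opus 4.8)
The plan is to deduce the theorem directly from the structural lemmas already established, paralleling the proof of \cref{thm:sblabeling} for {\swo} but with the case analysis collapsing from four cases to two. Fix $u = T$ and suppose $v = Z$ and $w = Q$ are distinct elements of the {\staml} each covering $T$, say via $\trot{T}{Z}{(a,b)}$ and $\trot{T}{Q}{(c,d)}$ for Tamari tree ascents $(a,b)$ and $(c,d)$ of $T$. By \cref{rmk:nosamebottomtamascents} these ascents have distinct smaller elements, so we may assume $a < c$; in particular $\lambda(T,Z) = a \neq c = \lambda(T,Q)$, which verifies condition (i) of \cref{def:sblabeling}. It then remains to check conditions (ii) and (iii), namely that every saturated chain from $T$ to $Z \vee Q$ uses each of the labels $a$ and $c$ a positive number of times and uses no other labels.

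The key reduction is \cref{lem:tamstopbeingascent}, which shows that $(c,d)$ is always a Tamari tree ascent of $Z$, and that $(a,b)$ either remains a Tamari tree ascent of $Q$ or else $b = c$ with $a$ the $0$th child of $c$. This yields exactly two cases. In the first, \cref{lem:tamdiamondints} shows $[T, Z \vee Q]_{Tam}$ is a diamond whose two maximal chains carry label sequences $a,c$ and $c,a$. In the second, \cref{lem:tampentaints} shows $[T, Z \vee Q]_{Tam}$ is a pentagon whose two maximal chains carry label sequences $a,c$ and $c,a,a$. In both situations the interval has precisely two maximal chains — a fact underwritten by \cref{lem:tamnootheratoms} (the only atoms of the interval are $Z$ and $Q$) together with the explicit chains produced from the join descriptions in \cref{lem:tamjoinoftwoats} and \cref{lem:tamaddedinvsetcharact} — and both label sequences use only the labels $a$ and $c$, each at least once. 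Hence conditions (ii) and (iii) of \cref{def:sblabeling} hold, and $\lambda$ is an SB-labeling.

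There is essentially no hard step remaining at the level of the theorem itself: all of the genuine work has been pushed into the preceding lemmas, so the proof is just an assembly of the two cases. The one point worth emphasizing is why the {\staml} is strictly simpler than {\swo}, where \cref{thm:sblabeling} also had to contend with a fourth, hexagonal case. Here, because a Tamari tree ascent is forced to be a parent--child pair (as noted just after \cref{thm:stamarilattice}), \cref{lem:tamstopbeingascent} produces only the two possibilities above rather than the four of \cref{lem:stopbeingascent}; in particular, the situation in which both $(a,b)$ fails to be an ascent of $Q$ and $(c,d)$ fails to be an ascent of $Z$ cannot arise, since $(c,d)$ always survives. Thus no hexagon appears, and the diamond and pentagon lemmas alone suffice to conclude.
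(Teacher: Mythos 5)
Your proposal is correct and follows essentially the same route as the paper's own proof: condition (i) from the fact that distinct Tamari tree ascents have distinct smaller elements, and conditions (ii) and (iii) from the two-case split of \cref{lem:tamstopbeingascent} resolved by \cref{lem:tamdiamondints} and \cref{lem:tampentaints}. Your added explanation of why no hexagonal case arises is a faithful elaboration of what the paper leaves implicit, not a different argument.
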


\begin{proof}
Condition (i) of \cref{def:sblabeling} is satisfied by \cref{rmk:nosamebottomascents}. \cref{lem:tamstopbeingascent}, \cref{lem:tamdiamondints}, and \cref{lem:tampentaints} together imply conditions (ii) and (iii) of \cref{def:sblabeling} are satisfied proving the theorem.
\end{proof}

\cref{thm:tamsblabelthm} and \cref{thm:sbthm} prove a characterization of the homotopy type of open intervals in the {\staml} and so also characterize its M{\"o}bius function.

\begin{corollary}\label{cor:tamhomotopandmobius}
Let $T\preceq_{Tam} Z$ in the {\staml}. Then $\Delta(T,Z)_{Tam}$, the order complex of the open interval $(T,Z)_{Tam}$, is homotopy equivalent to a ball or a sphere of some dimension. Moreover, the M{\"o}bius function of the {\staml} satisfies $\mu_{Tam}(T,Z) \in \set{-1,0,1}$.
\end{corollary}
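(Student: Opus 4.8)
The plan is to deduce this corollary as an immediate consequence of the SB-labeling we have just constructed, mirroring exactly the argument used for \cref{cor:homotopandmobius} in the case of {\swo}. The key observation is that all of the genuine work has already been done: \cref{thm:tamsblabelthm} establishes that the edge labeling $\lambda$ of \cref{def:sblabelingtamari} is an SB-labeling of the {\staml}, and \cref{thm:stamarilattice} guarantees that the {\staml} is a finite lattice, so the hypotheses of the Hersh--M\'esz\'aros machinery are in place.

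First I would fix $T\preceq_{Tam}Z$ and consider the open interval $(T,Z)_{Tam}$ as an interval in the {\staml} viewed as a lattice in its own right. Applying \cref{thm:sbthm} to $L$ equal to the {\staml}, with $u=T$ and $v=Z$, yields directly that $\Delta(T,Z)_{Tam}$ is homotopy equivalent to a ball or a sphere of some dimension. The one point worth flagging is that \cref{thm:sbthm} is a statement about a lattice $L$ and its intervals, so it must be invoked for the {\staml} itself rather than for {\swo}; this is legitimate precisely because the {\staml} is a genuine finite lattice by \cref{thm:stamarilattice} and $\lambda$ is an SB-labeling of it by \cref{thm:tamsblabelthm}.

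For the M\"obius function statement I would use the standard identity $\mu_{Tam}(T,Z)=\tilde{\chi}(\Delta(T,Z)_{Tam})$ recorded in the poset background subsection. Since the reduced Euler characteristic of a ball is $0$ and that of a $d$-sphere is $(-1)^d$, the homotopy dichotomy just established forces $\mu_{Tam}(T,Z)\in\set{-1,0,1}$. I do not anticipate any real obstacle here: the proof is a two-line invocation of \cref{thm:tamsblabelthm} and \cref{thm:sbthm} followed by the Euler characteristic computation, entirely parallel to the proof of \cref{cor:homotopandmobius}. The only care needed is the bookkeeping noted above, namely that the ambient lattice in the application of \cref{thm:sbthm} is the {\staml}.
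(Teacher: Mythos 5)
Your proposal is correct and matches the paper's argument exactly: the corollary follows by applying \cref{thm:sbthm} to the {\staml} equipped with the SB-labeling of \cref{thm:tamsblabelthm}, and then using $\mu_{Tam}(T,Z)=\tilde{\chi}(\Delta(T,Z)_{Tam})$ together with the reduced Euler characteristics of balls and spheres, just as in \cref{cor:homotopandmobius}. No gaps.
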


Furthermore, we give the analogous intrinsic description of open $s$-Tamari intervals whose order complexes are homotopy spheres as for $s$-weak order. We begin with a lemma characterizing the join of atoms in a closed interval $[T,Z]_{Tam}$.

\begin{lemma}\label{lem:tamlesssharpcharacspheres}
If $T \prec_{Tam} Z$, then $Z$ is the join of the atoms in $[T,Z]_{Tam}$ if and only if \[ \inv{Z} = \left( \inv{T} + A^{Tam}_T(a_1,b_1) + \dots + A^{Tam}_T(a_l,b_l) \right)^{tc} \] where $(a_1,b_1), \dots, (a_l,b_l)$ are the Tamari tree ascents of $T$ such that $(b_i,a_i)\in \inv{Z}-\inv{T}$. Moreover, the number of atoms in the interval $[T,Z]_{Tam}$ is $l$ regardless of whether or not $Z$ is the join of atoms in the interval.
\end{lemma}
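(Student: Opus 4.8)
The plan is to mirror the proof of the \swo\ analogue \cref{lem:lesssharpcharacspheres} essentially verbatim, replacing each \swo\ ingredient by its $s$-Tamari counterpart: \cref{thm:stamaricovers} for the cover-relation characterization, \cref{lem:taminversionsadded} for the inversions added by a rotation, and \cref{lem:nocommontaminvsadded} for pairwise disjointness. First I would dispose of the count of atoms exactly as in \cref{lem:lesssharpcharacspheres}: by \cref{thm:stamaricovers} the elements covering $T$ in the {\staml} are in bijection with the Tamari tree ascents of $T$, the ascent $(a_i,b_i)$ producing the rotation $T_i$ with added inversion $(b_i,a_i)$, and the atoms of $[T,Z]_{Tam}$ are precisely those $T_i$ whose added inversion satisfies $(b_i,a_i)\in\inv{Z}-\inv{T}$. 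Hence there are exactly $l$ atoms, independently of whether $Z$ is their join.

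Next I would reduce the stated equivalence to a single inversion-set computation. Let $T_1,\dots,T_l$ be the atoms corresponding to $(a_1,b_1),\dots,(a_l,b_l)$. Since the {\staml} is a sublattice of {\swo} by \cref{thm:stamarilattice}, the join $\bigvee_{i=1}^{l}T_i$ computed in the {\staml} agrees with the one computed in {\swo}; moreover, because each $T_i\preceq_{Tam}Z$ we have $\bigvee_{i=1}^{l}T_i\preceq_{Tam}Z$, so this join lies in $[T,Z]_{Tam}$ and equals the join of the atoms taken inside the interval. Therefore $Z$ is the join of the atoms of $[T,Z]_{Tam}$ if and only if $Z=\bigvee_{i=1}^{l}T_i$, i.e. if and only if $\inv{Z}=\inv{\bigvee_{i=1}^{l}T_i}$. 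Thus it suffices to prove
\[ \inv{\bigvee_{i=1}^{l} T_i} = \left( \inv{T} + A^{Tam}_T(a_1,b_1) + \dots + A^{Tam}_T(a_l,b_l) \right)^{tc}. \]

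To establish this, I would iterate the two-element join formula of \cref{thm:latticejoin}. Writing $\bigvee_{i=1}^{l}T_i=\big(\bigvee_{i=1}^{l-1}T_i\big)\vee T_l$, induction on $l$ together with the idempotence of transitive closure and the identity $(A^{tc}\cup B)^{tc}=(A\cup B)^{tc}$ gives $\inv{\bigvee_{i=1}^{l}T_i}=(\inv{T_1}\cup\dots\cup\inv{T_l})^{tc}$. By \cref{lem:taminversionsadded}, $\inv{T_i}=\inv{T}+A^{Tam}_T(a_i,b_i)$, and by \cref{lem:nocommontaminvsadded} the sets $A^{Tam}_T(a_i,b_i)$ are pairwise disjoint. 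Disjointness forces each such set to increment the multiplicity of each of its pairs by exactly one above $\inv{T}$ with no cap at $s(y)$ being triggered, so at every pair the maximum recorded by $\inv{T_1}\cup\dots\cup\inv{T_l}$ equals the value recorded by the sum $\inv{T}+\sum_i A^{Tam}_T(a_i,b_i)$; I would phrase this as the two containments used in \cref{lem:lesssharpcharacspheres}. Taking transitive closures of both sides yields the displayed equality.

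I expect the only step requiring genuine care to be the passage from the two-element join of \cref{thm:latticejoin} to the $l$-fold join, namely verifying $\inv{\bigvee_{i=1}^{l}T_i}=(\inv{T_1}\cup\dots\cup\inv{T_l})^{tc}$ and identifying this union with the sum via pairwise disjointness; both amount to routine closure-operator bookkeeping. Everything else transcribes the \swo\ argument and is in fact slightly simpler here, since Tamari tree ascents are parent–child pairs, making \cref{lem:nolowertamascents} and \cref{lem:nocommontaminvsadded} cleaner than their \swo\ analogues. No substantive new obstacle should arise beyond confirming that the join inside the sublattice interval $[T,Z]_{Tam}$ coincides with the ambient \swo\ join.
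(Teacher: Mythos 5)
Your proposal is correct and follows essentially the same route as the paper, which likewise proves the atom count via \cref{thm:stamaricovers} and then transplants the proof of \cref{lem:lesssharpcharacspheres} verbatim, substituting \cref{lem:taminversionsadded} and \cref{lem:nocommontaminvsadded} and using that the {\staml} is a sublattice of {\swo} so the joins agree. Your write-up simply spells out a few of the bookkeeping steps (the iterated join and the union-versus-sum identification) that the paper leaves implicit.
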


\begin{proof}
The number of atoms follows from \cref{thm:stamaricovers}, the characterization of cover relations in the {\staml}. The rest of the statement follows from the same argument as in the proof of \cref{lem:lesssharpcharacspheres} with the lemmas about {\swo} replaced by the corresponding lemmas for the {\staml} because the {\staml} is a sublattice of {\swo}.
\end{proof}

We conclude with the theorem characterizing the open $s$-Tamari intervals which are homotopy equivalent to spheres.

\begin{theorem}\label{thm:tamcountthespheres}
If $T \prec Z$, then $\Delta (T,Z)_{Tam}$ is homotopy equivalent to a sphere if and only if \[\inv{Z} = \left(\inv{T} + A^{Tam}_T(a_1,b_1) + \dots + A^{Tam}_T(a_l,b_l) \right)^{tc}\] where $(a_1,b_1), \dots, (a_l,b_l)$ are the Tamari tree ascents of $T$ such that $(b_i,a_i)\in \inv{Z}-\inv{T}$. Moreover, in this case the dimension of the sphere is $l-2$.
\end{theorem}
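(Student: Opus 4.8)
The plan is to obtain this theorem as a direct assembly of results already in hand, mirroring how \cref{thm:countthespheres} was deduced from \cref{lem:lesssharpcharacspheres} and \cref{thm:sbthm} in the $s$-weak order setting. First I would invoke \cref{thm:tamsblabelthm}, which establishes that the labeling $\lambda$ of \cref{def:sblabelingtamari} is an SB-labeling of the {\staml}. Because the {\staml} is thus a finite lattice admitting an SB-labeling, the Hersh and M{\'e}sz{\'a}ros result \cref{thm:sbthm} applies: for any $T \prec_{Tam} Z$, the open interval $\Delta(T,Z)_{Tam}$ is homotopy equivalent to a ball or a sphere, it is a sphere precisely when $Z$ is the join of the atoms of $[T,Z]_{Tam}$, and in the sphere case it is $S^{d-2}$ with $d$ the number of atoms of $[T,Z]_{Tam}$.

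The second step is to rewrite the condition ``$Z$ is the join of the atoms of $[T,Z]_{Tam}$'' as the intrinsic inversion-set identity in the statement. This is exactly \cref{lem:tamlesssharpcharacspheres}, which says that $Z$ equals the join of the atoms of $[T,Z]_{Tam}$ if and only if $\inv{Z} = \left( \inv{T} + A^{Tam}_T(a_1,b_1) + \dots + A^{Tam}_T(a_l,b_l) \right)^{tc}$, where the $(a_i,b_i)$ are the Tamari tree ascents of $T$ with $(b_i,a_i)\in \inv{Z}-\inv{T}$. Combining this equivalence with the sphere criterion of the first step yields the desired ``if and only if.''

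For the dimension I would appeal once more to \cref{lem:tamlesssharpcharacspheres}, whose final sentence records that the number of atoms of $[T,Z]_{Tam}$ is exactly $l$, the number of those indexing Tamari tree ascents. Substituting $d=l$ into the conclusion $S^{d-2}$ of \cref{thm:sbthm} gives the sphere dimension $l-2$. Since every ingredient is already proven, no genuine obstacle remains; the only point requiring care is the bookkeeping that the atoms counted by $d$ in \cref{thm:sbthm} coincide with the $l$ Tamari tree ascents indexing the sum, and this matching is precisely what \cref{lem:tamlesssharpcharacspheres} guarantees, so the proof is a clean combination of prior results rather than a new argument.
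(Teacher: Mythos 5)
Your proposal is correct and follows exactly the paper's own argument: the paper proves \cref{thm:tamcountthespheres} by combining \cref{lem:tamlesssharpcharacspheres} with \cref{thm:sbthm}, using the atom count $l$ from the lemma to obtain the sphere dimension $l-2$. Your additional bookkeeping (explicitly citing \cref{thm:tamsblabelthm} to justify applying \cref{thm:sbthm}) is a sound, if slightly more verbose, rendering of the same proof.
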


\begin{proof}
This follows from combining \cref{lem:tamlesssharpcharacspheres} and \cref{thm:sbthm}.
\end{proof}

\end{section}

\section*{Acknowledgements}
The author is grateful to Patricia Hersh for guidance and many helpful discussions throughout the course of this work. The author also thanks Joseph Doolittle for helpful discussions about the classical Tamari lattice at FPSAC 2019 where the author first encountered the work of Ceballos and Pons.

\bibliography{bib}

\vspace{15mm}

Stephen Lacina

Department of Mathematics, University of Oregon, Eugene, OR 97403

\textit{Email address:} slacina@uoregon.edu

\end{document}